\def\cA{{\mathcal{A}}}
\def\cB{{\mathcal{B}}}
\def\cF{{\mathcal{F}}}
\def\C{{\mathbb{C}}}
\def\D{{\mathbb{D}}}
\def\d{\,{\mathrm{d}}}
\def\HH{{\mathcal{H}}}
\def\cI{{\mathcal{I}}}
\def\KK{{\mathcal{K}}}
\def\LL{{\mathcal{L}}}
\def\GG{{\mathcal{G}}}
\def\cF{{\mathcal{F}}}
\def\T{{\mathbb{T}}}
\def\Z{{\mathbb{Z}}}
\def\N{{\mathbb{N}}}
\def\R{{\mathbb{R}}}
\def\osp{{\overline{\textrm{span}}}}
\def\esssup{{\mathrm{ess}\sup}}
\DeclareRobustCommand{\rchi}{{\mathpalette\irchi\relax}}
\newcommand{\irchi}[2]{\raisebox{\depth}{$#1\chi$}} 
\newcommand{\norm}[1]{{\left\|{#1}\right\|}}
\newcommand{\abs}[1]{{\left|{#1}\right|}}
\newcommand{\scal}[1]{{\langle{#1}\rangle}}
\newtheorem{theorem}{Theorem}[section]
\newtheorem{lemma}[theorem]{Lemma}
\newtheorem{proposition}[theorem]{Proposition}
\newtheorem{corollary}[theorem]{Corollary}
\newtheorem{definition}[theorem]{Definition}
\newtheorem{remark}[theorem]{Remark}
\theoremstyle{definition}
\newtheorem{example}[theorem]{Example}
\begin{document}

\title{Optimal Dynamical Frames}

\let\thefootnote\relax\footnote{2020 {\em Mathematics Subject Classification:} Primary 42C15, 47A15, 47A45, 30H10. {\em Keywords:} Dynamical sampling, frame theory, Hardy spaces, model spaces, contractions.}

\author[A. Aguilera, C. Cabrelli, F. Negreira, V. Paternostro]{A. Aguilera, C. Cabrelli, F. Negreira, V. Paternostro}
\address{Departamento de Matem\'atica, Universidad de Buenos Aires, Instituto de Matem\'atica ``Luis Santal\'o'' (IMAS-CONICET-UBA), Buenos Aires, Argentina.}
\email{aaguilera@dm.uba.ar}
\email{carlos.cabrelli@gmail.com}
\email{fnegreira@dm.uba.ar}
\email{vpater@dm.uba.ar}


\maketitle

\begin{abstract}
Let $T$ be a bounded operator acting on an infinite-dimensional Hilbert space. We provide necessary and sufficient conditions on $T$ for the existence of a Parseval frame of iterations.

Based on this result, we give a new proof of a known characterization concerning the existence of (not necessarily Parseval) frames of iterations. As a consequence of our results, we obtain that certain operators lack Parseval frames despite possessing frames of iterations.

Furthermore, we introduce the {\em frame index} of a bounded operator $T$ as the minimal number of vectors required to generate a frame via its iterations. We obtain the exact value of this index for both Parseval frames and general frames of iterations, and we give a constructive method to generate such frames.

Assuming that $T$ satisfies the conditions ensuring that both $T$ and $T^*$ admit frames of iterations, we show how to construct Parseval frames generated by iterations of $T$ and also by iterations of $T^*$. This construction relies on universal models in vector-valued Hardy spaces and the theory of universal dilations. Furthermore, we provide necessary and sufficient conditions under which the frames generated by $T$ and $T^*$ are similar, in terms of the inner function of the associated model in the Hardy-space model associated with $T$.
\end{abstract}

\maketitle

\section{Introduction}

Let $\HH$ be a complex separable infinite-dimensional Hilbert space. We study the class of bounded linear operators $T$ defined on $\HH$ that admit a {\em frame of iterations}; that is, a frame of the form $\{T^n v_i\}_{i \in I,\, n \geqslant 0}$, where $\{v_i\}_{i \in I}$ is a countable collection of vectors in $\HH$ (called {\em generators}).

Understanding the conditions under which an operator $T$ admits such a frame is a central problem, motivated both by its intrinsic operator-theoretic interest and by its relevance to applications where iterative structures naturally arise. One of the main  motivations is the theory of {\em dynamical sampling}. In dynamical sampling, a signal evolving over time is observed through samples taken at fixed spatial locations and at successive time instances. When spatial sampling is sparse, the main challenge is to reconstruct the initial signal from these space-time samples by exploiting temporal evolution.

From a mathematical perspective, the dynamical sampling problem can be reformulated as determining when the orbits of an operator $T$ (the evolution operator) form a frame, that is, when a frame of iterations,  also called a {\em dynamical frame}, exists (see~\cite{ACMT}). The theory of dynamical sampling has developed into an active and rich area of research; see, for instance, \cite{ADK13,AAK19,AT14,ACMT,ACCMP17,AP17,CMPP20,CHS18,CMS21,CHP,DMM21,DMMM2021-1,MMO23,DLW24,Philipp17,Tang17,AK17,ACCP,CHPS24}.

Recall that a collection of vectors $\{x_i\}_{i\in I}\subseteq\HH$ is a
{\em frame} for $\HH$ if there exist constants $A,B>0$ such that
\begin{equation*}
A\norm{x}^2 \leqslant \sum_{i\in I} \abs{\scal{x,x_i}}^2 \leqslant B\norm{x}^2,
\quad \text{for all } x\in\HH.
\end{equation*}
When $A=B=1$ we say that $\{x_i\}_{i\in I}$ is a {\em Parseval frame}.  For each frame $\{x_i\}_{i\in I}$ there exists another frame  called a  dual frame $\{y_i\}_{i\in I}$ such that every $x\in\HH$ can be represented as $x=\sum_{i\in I}\scal{x,x_i}\,y_i$. In general, frames can have many dual frames. This property is of great importance in applications.

In this work, we settle the problem of the existence of frames of iterations, building upon previous results and further developing the theory. In particular, we study the existence of Parseval frames of iterations and prove that the sufficient conditions previously known for their existence \cite{AP17} are also necessary. This establishes a complete characterization of operators that generate Parseval dynamical frames and, consequently, resolves the question of whether there exist operators that do not  admit such frames.

A central contribution of this paper is the study of the {\em minimal number of generators} required to produce a frame of iterations. We formalize this notion through the introduction of the {\em frame index} of a bounded linear operator, which measures the smallest number of vectors whose orbits under the operator form a frame of the space.

In many dynamical sampling scenarios, the goal is to reconstruct the initial state of an evolving system from finitely many measurements obtained by sensors located at fixed spatial positions and recording the signal at successive time instants. In this framework, the sensor positions correspond to the generators of the system. From an applied viewpoint, identifying a {\em minimal set of generators} is of particular importance, since they represent the most efficient configuration of sensors capable of ensuring stable recovery of the evolving signal. Reducing their number not only minimizes costs and addresses practical constraints such as accessibility or physical limitations, but also reveals intrinsic structural properties of the underlying operator. This interplay between efficiency and structure motivates the study of frames of iterations and their minimal generating systems.

In this direction, our main result provides an explicit formula for the minimal number of generators required to produce a Parseval frame of iterations. Then, from this formula, we derive an expression for the general minimal number of generators that give a frame (not necessarily Parseval) of iterations. We also present examples illustrating the difference between the minimal number of generators needed to produce a general frame of iterations and those needed to obtain a Parseval frame of iterations.

Beyond establishing these theoretical conditions, our methods yield a constructive framework for generating frames of iterations for a given operator. We refine these constructions to obtain {\em optimal dynamical frames}, realized by means of a minimal number of generators that are also linearly independent. This is possible once we know the value of this minimal number.

We also show that if a frame of iterations can be generated by a {\em finite} set of vectors, then every minimal set of generators must consist of linearly independent vectors. Moreover, even in the case where the minimal set of generators is {\em infinite}, it can always be chosen to be linearly independent.

Furthermore, we address the question of whether the property of admitting a dynamical frame is preserved by the  adjoint operator. Specifically, assuming that there exists a set $\{v_i\}_{i\in I}\subseteq\HH$ such that $\{T^n v_i\}_{i\in I,\,n\geqslant0}$ forms a frame, does there also exist a set
$\{w_j\}_{j\in J}\subseteq\HH$ such that $\{(T^*)^n w_j\}_{j\in J,\,n\geqslant0}$ is a frame of $\HH$?  If so, what is the relation between these two frames?  Are they similar frames of iterations, in the sense of Definition \ref{def-similar-frames}? We provide affirmative answers to these questions.

Our approach relies on functional models defined in Hardy spaces with multiplicity. By exploiting the rich structure of these spaces, we solve the central problems in this concrete setting and then transfer the results to the abstract framework.

\subsection{Organization of the paper and main results}

We begin by examining the problem of the existence of Parseval frames of iterations. In \cite{AP17}, it was shown that an operator $T$ admits a Parseval frame of iterations if it is a {\em contraction}, i.e. $\norm{T}\leqslant1$, and if its adjoint is {\em strongly stable}, that is  $(T^*)^nv\to0$ as $n\to+\infty$ for all $v\in\HH$. In this paper, we show that these conditions are also necessary, thereby obtaining a complete characterization of operators that generate Parseval frames of iterations.

\begin{theorem}\label{thm:iterationsParseval}
Let $T$ be a bounded linear operator on $\HH$. There exists a set of vectors $\{v_i\}_{i\in I}\subseteq\HH$ such that $\{T^nv_i\}_{i\in I,n\geqslant0}$ is a Parseval frame in $\HH$ if and only if $\norm{T}\leqslant1$ and $(T^*)^nv\to0$ as $n\to+\infty$ for all $v\in\HH$. 
\end{theorem}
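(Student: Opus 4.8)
My plan is to prove the two directions separately, with the necessity being the easier part and the sufficiency requiring a concrete construction.

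For necessity, suppose $\{T^n v_i\}_{i\in I, n\geqslant 0}$ is a Parseval frame. Then for every $x\in\HH$ we have $\|x\|^2 = \sum_{i,n}|\scal{x, T^n v_i}|^2 = \sum_{i,n}|\scal{(T^*)^n x, v_i}|^2$. Applying this to $T^*x$ and separating the $n=0$ term, I would get $\|T^*x\|^2 = \sum_{i,n\geqslant 0}|\scal{(T^*)^{n+1}x, v_i}|^2 = \|x\|^2 - \sum_i |\scal{x,v_i}|^2 \leqslant \|x\|^2$, hence $\|T^*\|\leqslant 1$, so $\|T\|\leqslant 1$. For the strong convergence $(T^*)^n x\to 0$: iterating the same identity, $\|( T^*)^k x\|^2 = \|x\|^2 - \sum_{i}\sum_{n=0}^{k-1}|\scal{(T^*)^n x, v_i}|^2$, and since the total sum $\sum_{i,n}|\scal{(T^*)^n x, v_i}|^2 = \|x\|^2$ converges, the tail $\sum_i\sum_{n\geqslant k}|\scal{(T^*)^n x,v_i}|^2\to 0$; but this tail equals $\|(T^*)^k x\|^2$ by the telescoping identity, so $(T^*)^k x\to 0$.

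For sufficiency, assume $\|T\|\leqslant 1$ and $(T^*)^n\to 0$ strongly; this says $T^*$ is a completely non-unitary contraction of class $C_{\cdot 0}$ (equivalently $T$ is in $C_{0\cdot}$), so $T$ is unitarily equivalent to a restriction of a backward shift to a co-invariant subspace — or, dually, $T^*$ is unitarily equivalent to the compression of a shift $S$ on a vector-valued Hardy space $H^2(\HH')$ to a model subspace $\KK$. The idea is then to produce the Parseval frame by pulling back the natural Parseval frame of iterations living on the Hardy space. Concretely, on $H^2_{\ell^2}(\D)$ the vectors $\{e_k z^n\}_{k,n}$ (standard basis times monomials) form an orthonormal basis, hence a Parseval frame of iterations of the shift; compressing/projecting this to the model space and transporting through the unitary equivalence should yield a Parseval frame of iterations for $T$. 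The cleanest route: realize $T$ itself (not $T^*$) as the compression $P_\KK S^*|_\KK$ of a backward shift, note $T^n = P_\KK (S^*)^n|_\KK$, take an orthonormal basis $\{u_j\}$ of the wandering subspace generating $H^2$ and set $v_j = P_\KK u_j$; then $\{T^n v_j\} = \{P_\KK (S^*)^n u_j\}$ and one checks $\sum_{j,n}|\scal{x, P_\KK(S^*)^n u_j}|^2 = \sum_{j,n}|\scal{x, (S^*)^n u_j}|^2 = \|x\|^2$ for $x\in\KK$, using that $\KK$ is $S^*$-invariant (so $P_\KK$ can be dropped against $x\in\KK$) and that $\{(S^*)^n u_j\}$ is a Parseval frame on all of $H^2$.

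The main obstacle — and the step I expect to need the most care — is the functional-model input: justifying cleanly that a contraction $T$ with $(T^*)^n\to 0$ strongly is unitarily equivalent to $P_\KK S^*|_\KK$ for $S$ a shift of the appropriate (possibly infinite) multiplicity on a vector-valued Hardy space, and identifying the generating wandering subspace explicitly enough that the projected vectors $v_j$ are concrete. This is the Sz.-Nagy–Foiaş model theory for $C_{\cdot 0}$ contractions, and the bookkeeping with multiplicity (the dimension of the wandering subspace, which governs how many generators are needed and ties into the "index" discussed later) is where the technical weight lies; the frame identity itself is then a short computation once the model is in place.
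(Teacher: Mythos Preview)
Your necessity argument is correct and is actually more direct than the paper's. The paper deduces the contraction property and strong stability of $T^*$ by first invoking the model-space machinery (Proposition~\ref{basic}) to get a unitary equivalence $T\cong A_N$, then checking these properties for $A_N$ and transferring back. Your telescoping computation with the Parseval identity bypasses the model entirely and gives both conclusions in a few lines; this is a genuine simplification.

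For sufficiency your overall strategy matches the paper's (functional model plus projection of an orthonormal basis), but the roles of $S$ and $S^*$ are swapped in your write-up, and as stated the computation does not go through. Under the hypothesis that $T^*$ is strongly stable, the Rota--de Branges--Rovnyak model (Theorem~\ref{niko}) realizes $T$ as the compression of the \emph{forward} shift, $T\cong A_N=P_N S|_N$ on an $S^*$-invariant subspace $N\subseteq H^2_\KK$, with $T^*\cong S^*|_N$. Your version makes $T$ the compression of $S^*$ to an $S^*$-invariant $\KK$; but then $P_\KK S^*|_\KK=S^*|_\KK$ forces $T^n\to0$, which is not assumed. Moreover the identity $T^n = P_\KK (S^*)^n|_\KK$ you use requires $\KK^\perp$ (not $\KK$) to be $S^*$-invariant, and the claim that $\{(S^*)^n u_j\}_{j,n}$ is a Parseval frame of $H^2$ fails since $S^*$ annihilates constants. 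The fix is exactly the paper's: take $N$ $S^*$-invariant, $v_j=P_N e_j$ with $\{e_j\}$ an orthonormal basis of $\KK$, and use $A_N^n P_N e_j = P_N S^n e_j$ together with the fact that $\{S^n e_j\}$ is an orthonormal basis of $H^2_\KK$; then $\sum_{j,n}|\scal{x,P_N S^n e_j}|^2=\|x\|^2$ for $x\in N$, which is precisely Proposition~\ref{basicparseval}. Once this swap is made, your sufficiency argument coincides with the paper's.
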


 The existence of frames of iterations, not necessarily Parseval, has already been characterized in \cite{CMPP20}.

\begin{theorem}[\cite{CMPP20}]\label{existencia}
Let $T$ be a bounded linear operator on $\HH$. Then there exists a frame of iterations by $T$ if and only if $T$ is similar to a contraction and $(T^*)^nv\to0$ as $n\to+\infty$ for all $v\in\HH$.
\end{theorem}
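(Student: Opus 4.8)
The plan is to prove Theorem~\ref{thm:iterationsParseval} by reducing the Parseval case to a concrete model operator on a vector-valued Hardy space, and then to deduce Theorem~\ref{existencia} from it. Let me sketch the necessity and sufficiency directions separately.

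For necessity, suppose $\{T^nv_i\}_{i\in I,n\geqslant0}$ is a Parseval frame. The Parseval property gives the reconstruction formula $x=\sum_{i,n}\scal{x,T^nv_i}T^nv_i$ for all $x\in\HH$, and in particular the Bessel bound $\sum_{i,n}\abs{\scal{x,T^nv_i}}^2=\norm{x}^2$. Applying this with $x=Ty$ and using $\scal{Ty,T^nv_i}=\scal{y,T^{*}T^nv_i}$... actually the cleaner route is: a Parseval frame is the image of an orthonormal basis under a coisometry. Concretely, if $\{e_{i,n}\}$ is an orthonormal basis of $\ell^2(I\times\N_0)$ and $V:\ell^2\to\HH$ is the (bounded, surjective, coisometric) synthesis operator $Ve_{i,n}=T^nv_i$, then $VV^*=\mathrm{Id}_\HH$. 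The shift structure $T^nv_i = T(T^{n-1}v_i)$ says $TV = VS$ where $S$ is the ``backward-in-$n$'' operation, i.e. $S e_{i,n}=e_{i,n-1}$ for $n\geqslant 1$ and $Se_{i,0}=0$; this $S$ is a coisometry (the adjoint of a shift of infinite multiplicity $|I|$) whose adjoint powers tend strongly to zero. From $TV=VS$ and $VV^*=\mathrm{Id}$ one gets $T=VSV^*$, hence $\norm{T}\leqslant\norm{S}=1$, and $(T^*)^n = V S^{*n} V^*$ wait — one needs $T^* = V S^* V^*$, which follows by taking adjoints \emph{provided} $V^*V$ commutes appropriately; more carefully, $T^*V = ?$ is not immediate, so the right statement is $T^n = V S^n V^*$ for all $n\geqslant 0$ (iterate $TV=VS$ and use $V^*V$ is a projection onto $(\ker V)^\perp$ which is $S$-invariant). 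Then $(T^*)^n v = (V S^n V^*)^* v$... Let me just say: one shows $T^{*n}v = V S^{*n} V^* v$ is false in general, but $\norm{T^{*n}v}^2 = \sum_{i,m}\abs{\scal{T^{*n}v,T^m v_i}}^2 = \sum_{i,m}\abs{\scal{v,T^{n+m}v_i}}^2 = \sum_{i,m\geqslant n}\abs{\scal{v,T^m v_i}}^2$, which is the tail of a convergent series (equal to $\norm{v}^2$), hence tends to $0$. That is the clean argument for $T^{*n}v\to0$; and $\norm{Tx}^2 = \sum \abs{\scal{Tx,T^nv_i}}^2 \leqslant \sum\abs{\scal{x, T^nv_i}}^2 = \norm{x}^2$ wait that inequality goes the wrong way — $\scal{Tx,T^nv_i}=\scal{x,T^{*n+1}... }$ no. Use instead: $\norm{Tx}^2=\sum_{i,n}\abs{\scal{Tx,T^nv_i}}^2$. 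For $n\geqslant1$, $\scal{Tx,T^nv_i}=\scal{x,T^{*}T^n v_i}$ — not obviously a term of $\sum\abs{\scal{x,T^m v_i}}^2$. The honest contraction argument: pair $Tx$ with the frame and note $\scal{Tx, T^n v_i}$ equals... one should instead use the synthesis-operator intertwining $TV=VS$ together with $V$ coisometric to get $\norm{T}\le\|S\|\cdot\|V\|\cdot\|V^*\|=1$ via $T=TVV^*=VSV^*$. So the key identity $T=VSV^*$ is the crux of necessity.

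For sufficiency, assume $T$ is a contraction with $T^{*n}\to0$ strongly (a ``$C_{\cdot 0}$ contraction''). By the Sz.-Nagy–Foiaş / Rota-type model theory, such a $T$ is unitarily equivalent to a restriction of a backward shift of some multiplicity $N\le\aleph_0$ on $H^2(\D;\C^N)$ to an invariant subspace, or dually $T^*$ is a restriction of a \emph{forward} shift; more usefully, $T$ is a \emph{compression} of the backward shift, or $T$ is \emph{unitarily equivalent to} $P_{\KK}S^*|_{\KK}$... The cleanest tool: a $C_{\cdot0}$ contraction admits an isometric co-extension to a backward shift, equivalently there is a coisometry $V:\ell^2(\N_0;\KK_0)\to\HH$ onto $\HH$ with $TV=VS^*$ where $S$ is the unilateral shift. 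Then taking $\{v_i\}$ to be the images under $V$ of the ``level-zero'' basis vectors $\{e_i\otimes \delta_0\}$ of a (at most countable) orthonormal basis of $\KK_0$, the family $\{T^n v_i\} = \{V S^{*n}(e_i\otimes\delta_0)\}=\{V(e_i\otimes\delta_n)\}$ is the image of an orthonormal basis of $\ell^2$ under the coisometry $V$, hence a Parseval frame of $\HH$. So sufficiency reduces to the \emph{existence of an isometric (co)dilation of the right shape}, which is exactly the dilation theory the abstract tells us is available.

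The main obstacle I expect is pinning down the correct dilation statement and the intertwining in the right direction: one must produce $V$ surjective (coisometric, not merely isometric) with $TV=VS^*$ (backward shift), which is precisely the characterization of $C_{\cdot0}$ contractions as quotients of backward shifts — this is where $\|T\|\le1$ and $T^{*n}\to0$ are \emph{both} used (boundedness for the Cesàro/geometric sum defining $V^*$ to converge, and $T^{*n}\to 0$ for $V^*$ to be an isometry onto a closed subspace so that $V$ is onto). Once that model is in hand, deducing Theorem~\ref{existencia} is routine: if $T=QRQ^{-1}$ with $R$ a contraction and $(T^*)^nv\to0$, then $(R^*)^n = Q^*(T^*)^n(Q^{-1})^*\to0$ strongly as well (using $Q$ invertible), so $R$ is $C_{\cdot0}$, apply Theorem~\ref{thm:iterationsParseval} to get a Parseval frame $\{R^nw_i\}$, and push it forward by $Q$: $\{T^n(Qw_i)\} = \{QR^nw_i\}$ is the image of a frame under the bounded invertible $Q$, hence a frame; conversely if $\{T^nv_i\}$ is any frame its synthesis operator $V$ is bounded below on $(\ker V)^\perp$ and surjective, $TV=VS^*$ forces $T$ to be similar to $S^*$ compressed to an invariant subspace — a contraction — and the tail argument above gives $T^{*n}v\to0$.
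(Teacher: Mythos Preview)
Your overall strategy is correct and matches the paper's: establish the Parseval case first (necessity via the coisometric synthesis operator, sufficiency via the Rota/de~Branges--Rovnyak model for $C_{\cdot 0}$ contractions, which is exactly the paper's Theorem~\ref{niko}), and then deduce the general case by transporting through a similarity. Your tail computation $A\norm{(T^*)^nv}^2\leqslant\sum_{i,m\geqslant n}\abs{\scal{v,T^mv_i}}^2\to 0$ is in fact a slightly more direct route to strong stability of $T^*$ than the paper's, which instead passes through the unitary equivalence $T\cong A_N$ and computes $(S^*)^n$ on Fourier coefficients.

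There is, however, a persistent sign error in your shift direction. With $Ve_{i,n}=T^nv_i$ one has $TVe_{i,n}=T^{n+1}v_i=Ve_{i,n+1}$, so the correct intertwining is $TV=VS$ with $S$ the \emph{forward} shift $Se_{i,n}=e_{i,n+1}$, not the backward one you wrote. The same reversal bites in your sufficiency paragraph: if $S$ is the unilateral (forward) shift then $S^{*n}(e_i\otimes\delta_0)=0$ for $n\geqslant 1$, so $\{T^nv_i\}=\{VS^{*n}(e_i\otimes\delta_0)\}$ is vacuous; you want $TV=VS$ and $T^nv_i=VS^n(e_i\otimes\delta_0)=V(e_i\otimes\delta_n)$. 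Once the direction is fixed, all your conclusions ($T=VSV^*$ hence $\norm{T}\leqslant 1$, the image of an orthonormal basis under a coisometry being Parseval, and the similarity $T\sim P_{(\ker V)^\perp}S|_{(\ker V)^\perp}$ in the general-frame case) go through as written.
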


We give a different proof of Theorem \ref{existencia} using Theorem \ref{thm:iterationsParseval}. Our proof exploits the connection between frames of iterations and {\em model spaces}, i.e. spaces of analytic vector-valued functions in $L^2(\T,\KK)$ whose orthogonal complement in $H^2_\KK$ is shift-invariant. The origin of this idea can likely be traced back to the work of Rota \cite{R}. This connection is crucial for the results developed in the subsequent sections.

In Section \ref{Pre}, we establish the fundamental concepts and definitions required for this paper. Section \ref{proofs} is dedicated to presenting the proofs of the previously mentioned existence results, as well as a methodology for constructing examples of frames of iterations.

Section \ref{index} studies the minimal number of generators needed to produce a dynamical frame for a given operator $T$, called the {\em frame index} of $T$. For Parseval frames, we write $\gamma_p(T)$, and for general frames $\gamma(T)$.  We were able to provide a characterization of the frame index of $T$:
\begin{theorem}\label{thm-index-intro}
Let $T$ be a bounded operator in $\mathcal H$.
\begin{enumerate}[label=(\roman*),ref=(\roman*)]
\item If $T$ is a contraction with strongly stable adjoint, then 
\begin{equation*}
 \gamma_p(T)=\dim\overline{(I_{\HH}-TT^*)(\HH)}.
\end{equation*}
\item 
If 
$T$ is similar to a contraction and has  strongly stable adjoint, then 
\begin{equation*}
\gamma(T)=\min\{\dim\overline{(I_{\HH}-QQ^*)(\HH)} :Q\text{ is a contraction similar to }T\}.
\end{equation*}
\end{enumerate}
Moreover, in both cases, the index is attained by a linearly independent set of generators.
\end{theorem}

This theorem follows from the proofs of Theorem \ref{nikc2} and Proposition  \ref{general index}.

In the case that $T$ has a Parseval frame of iterations, i.e. $\gamma_p(T)$ is positive, clearly $\gamma(T)$ is also positive and $\gamma(T)\leqslant\gamma_p(T)$.
However, these two indices are not always equal for certain operators. For instance, we provide an example of an operator $T$ with $\gamma(T)=1$ and $\gamma_p(T)=\infty$. Also, note that there exist operators with Parseval index zero and positive general index (i.e., operators similar to contractions but with norm greater than one that have strongly stable adjoint). All the results concerning the index of an operator are presented  in Section \ref{index}.

Using Theorem \ref{existencia} and the fact that an operator $T$ is similar to a contraction if and only if its adjoint $T^*$ is also similar to a contraction, we get immediately the following result.

\begin{theorem}\label{coro}
Let $T$ be a bounded linear operator on $\HH$. Assume that $T$ has a frame of iterations. Then $T^*$ has a frame of iterations if and only if $T$ is strongly stable.
\end{theorem}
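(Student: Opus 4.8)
The plan is to reduce everything to the characterization in Theorem \ref{existencia}. First I would record its two ingredients: $T$ admits a frame of iterations precisely when (a) $T$ is similar to a contraction and (b) $(T^*)^nv\to 0$ as $n\to+\infty$ for every $v\in\HH$, i.e.\ $T^*$ is strongly stable. Applying this statement with $T$ replaced by $T^*$ yields that $T^*$ admits a frame of iterations if and only if $T^*$ is similar to a contraction and $(T^*)^*=T$ is strongly stable.

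Next I would observe that, under the standing hypothesis that $T$ has a frame of iterations, condition (a) holds, so $T$ is similar to a contraction; and that being similar to a contraction passes to the adjoint. Concretely, if $T=S^{-1}CS$ with $\norm{C}\leqslant 1$, then taking adjoints gives $T^*=S^*C^*(S^*)^{-1}$, which exhibits $T^*$ as similar to $C^*$, and $\norm{C^*}=\norm{C}\leqslant 1$, so $C^*$ is again a contraction. Hence the first of the two conditions required for $T^*$ to have a frame of iterations is automatically satisfied.

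Combining the two observations, $T^*$ has a frame of iterations if and only if $T$ is strongly stable, which is exactly the claimed equivalence. The only point requiring any argument is the permanence of the property ``similar to a contraction'' under passing to the adjoint, and this is the short computation indicated above; I do not expect any genuine obstacle here, the content being entirely a transfer through Theorem \ref{existencia}.
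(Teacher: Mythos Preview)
Your proposal is correct and follows essentially the same route as the paper: the paper states that the result follows immediately from Theorem \ref{existencia} together with the observation that an operator is similar to a contraction if and only if its adjoint is, which is exactly what you do. The only cosmetic remark is that you use $S$ for the intertwining isomorphism, while in this paper $S$ denotes the unilateral shift; using a different letter (e.g.\ $V$) would avoid a clash of notation.
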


If a bounded operator $T$
admits a frame of iterations, and so does its adjoint $T^*$, then both operators must be strongly stable. We refer to this joint requirement as {\em joint strong stability}.
 
In Section \ref{full range}, we examine in detail the joint strong stability in the setting of vector-valued Hardy spaces. With the rich structure of the functional models and using the theory of {\em minimal dilations} of Nagy and Foias \cite{NFBK10} we were able to link the joint strong stability with the concept of {\em full range} shift-invariant subspaces of the Hardy spaces. The theory of full range invariant subspaces was developed by Helson in the sixties \cite{H}.
 
This connection enables the construction of frames of iterations via the adjoint of the shift operator in Hardy space models. These are then transferred to the abstract Hilbert space setting. In particular, in Section 6 we develop a method for obtaining optimal frames for contractions satisfying the joint strong stability property. Optimality here refers to minimizing the number of generators.

Finally in Section \ref{estructura}, we address the question of when the frames generated by $T$ and $T^*$ are similar. This part of the work relies on advanced tools, including the Beurling-Lax-Halmos theorem, which characterizes the invariant subspaces under the unilateral shift operator acting on $H^2_\KK$, as well as the concept of inner functions in this context and their properties. We obtain necessary and sufficient conditions under which the frames of iterations by $T$ and $T^*$ are similar frames in terms of properties of the inner functions of the associated functional model in the Hardy space.

\section{Preliminaries}\label{Pre}

Let us recall some notation regarding operators in Hilbert spaces. All the Hilbert spaces considered in this paper are assumed to be separable. Given two Hilbert spaces $\HH$ and $\KK$, $\cB(\mathcal H,\KK)$ denotes the set of linear bounded operators from $\HH$ to $\KK$.  When $\KK=\HH$, we will simply write $\cB(\mathcal H,\HH)=\cB(\HH)$. 
We will say that an operator in $\cB(\mathcal H,\KK)$ is an {\em isomorphism}
if it is invertible.
If $N\subseteq\mathcal H$ is a closed subspace we denote by $P_N$ the orthogonal projection of $\HH$ onto $N$. Finally, if $T\in\mathcal B(\mathcal H)$, a closed subspace $N\subseteq\mathcal H$ is said to be {\em invariant under} $T$ or $T${\em-invariant} if $T(N)\subseteq N$ or, equivalently, $P_NTP_N=TP_N$.

Finally, we recall that $T_1\in\mathcal B(\HH_1)$ and $T_2\in\mathcal B(\HH_2)$ are said to be {\em similar} if there exists an isomorphism $V:\HH_1\to\HH_2$ such that
\begin{equation*}
VT_1V^{-1}=T_2.
\end{equation*}

\subsection{Model spaces}
Let us now give a brief exposition about model spaces, for more details, see e.g. \cite[Lecture $\S$VI]{H}. Let $\T$ denote the one-dimensional torus in $\C$, i.e. 
\begin{equation*}
\T:=\{z\in\C:\abs{z}=1\}
\end{equation*}
endowed with the normalized Lebesgue measure.

Given $\KK$ a Hilbert space, $L^2(\T,\KK)$ is the set of all measurable and square-integrable $\KK$-valued functions, which is a Hilbert space with the inner product
\begin{equation*}
\scal{f,g}:=\int_\T\scal{f(z),g(z)}_\KK\d z\qquad f,g\in L^2(\T,\KK).
\end{equation*}

If $\{e_i\}_{i\in I}$ is an orthonormal basis of $\KK$ and $\rchi:\T\to\T$ with $\rchi(z)=z$, then it can be shown that $\{e_i\rchi^n\}_{i\in I,n\in\Z}$ is an orthonormal basis of $L^2(\T,\KK)$. Using this basis one can write any function $f\in L^2(\T,\KK)$ uniquely as 
\begin{equation*}
f(z)=\sum_{n\in\Z}a_nz^n
\end{equation*}
for almost every $z\in\T$, where $a_n=\sum_{i\in I}\scal{f,e_i\rchi^n}e_i$ for all $n \in \Z$. In fact, $\{a_n\}_{n\in\Z}$ is a square-summable sequence in $\KK$ with $\sum_{n\in\Z}\norm{a_n}_{\KK}^2=\norm{f}^2$. For every integer $n$, $a_n$
denotes the
$n$-th Fourier coefficient of $f$.

The {\em Hardy space}  denoted by $H^2_\KK$, is the closed subspace of $L^2(\T,\KK)$ of functions whose negative Fourier coefficients are zero. That is, the subspace of $L^2(\T,\KK)$ spanned by $\{e_i\rchi^n\}_{i\in I,n\geqslant0}$. The multiplicity of $H^2_{\KK}$ is the $\dim(\KK)$. Observe that the Hilbert space $\KK$ is identified with the subspace of $H^2_{\KK}$ consisting of constant functions. Alternatively, functions in $H^2_\KK$ can be seen as the radial limit of vector-analytic functions in $\D$ such that  their power series coefficients are square-summable. That is, if $f\in H^2_\KK$ then its expansion
\begin{equation*}
f(z)=\sum_{n\geqslant0}a_nz^n
\end{equation*}
defines a holomorphic function in $\D$ with values in $\KK$. Under this identification, we have $a_n=\frac{f^n(0)}{n!}$ for all $n\geqslant0$.

We will denote $U:L^2(\T,\KK)\to L^2(\T,\KK)$ the {\em bilateral shift}, which is given by
\begin{equation*}
Uf(z):=zf(z)
\end{equation*}
for almost every $z\in\T$ and all $f\in H^2_\KK$. Observe that $U$ defines a bounded unitary operator in $L^2(\T,\KK)$ whose adjoint satisfies $U^*f(z)=\frac{f(z)}{z}$ for almost every $z\in\T$ and all $f\in L^2(\T,\KK)$. 

Since $H^2_\KK$ is invariant under $U$, restricting $U$ to ${H^2_\KK}$ gives an isometry $S:H^2_\KK\to H^2_\KK$ usually called the {\em unilateral shift}. Note that $S^*:H^2_\KK\to H^2_\KK$ is given by 
\begin{equation*}
S^*f(z)=P_{H^2_\KK}U^*f(z)=\dfrac{f(z)-f(0)}{z}
\end{equation*}
for almost every $z\in\T$ and all $f\in H^2_\KK$, where $P_{H^2_\KK}$ denotes the projection of $L^2(\T,\KK)$ onto $H^2_\KK$.

A {\em model space} $N$ is a closed subspace of $H^2_\KK$ that is invariant under $S^*$. We will consider the {\em shift compression to} $N$ as the operator $A_N:N\to N$ given by
\begin{equation*}
A_Nf:=P_NSf
\end{equation*}
for all $f\in N$. Note that, since $N$ is $S^*$-invariant, then $(A_N)^*f=S^*f$ for all $f\in N$.

\subsection{Basic frames}
Model spaces and their associated shift compression are key to characterizing frames of iterations in {\em any} Hilbert space. Particularly, we can reduce our analysis to studying sets of the form 
\begin{equation*}
\{A_N^n(P_Ne_i)\}_{i\in I,n\geqslant0},
\end{equation*}
where $N\subseteq H^2_\KK$ is a model space for some Hilbert space $\KK$ and $\{e_i\}_{i\in I}$ is an orthonormal basis of $\KK$. Such sets are indeed always Parseval frames:

\begin{proposition}\label{basicparseval}
Let $\KK$ be a Hilbert space, $N\subseteq H^2_\KK$ a model space and $\{e_i\}_{i\in I}$ an orthonormal basis of $\KK$. Then $\{A_N^n(P_Ne_i)\}_{i\in I,n\geqslant0}$ is a Parseval frame of $N$.
\end{proposition}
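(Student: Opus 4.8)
The plan is to verify the Parseval identity
\begin{equation*}
\sum_{i\in I}\sum_{n\geqslant0}\abs{\scal{f,A_N^n(P_Ne_i)}}^2=\norm{f}^2
\end{equation*}
directly for every $f\in N$, the strategy being to move the powers of $A_N$ onto the other factor of the inner product. Since $N$ is $S^*$-invariant we have $(A_N)^*=S^*|_N$, and because $S^*(N)\subseteq N$ a straightforward induction gives $(A_N^n)^*f=(S^*)^nf$ for all $f\in N$ and $n\geqslant0$. Hence
\begin{equation*}
\scal{f,A_N^n(P_Ne_i)}=\scal{(S^*)^nf,P_Ne_i}=\scal{P_N(S^*)^nf,e_i}=\scal{(S^*)^nf,e_i},
\end{equation*}
where the middle equality uses that $P_N$ is self-adjoint and the last one that $(S^*)^nf\in N$.

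Next I would identify this quantity with a Fourier coefficient of $f$. Writing $f(z)=\sum_{k\geqslant0}a_kz^k$ with $a_k\in\KK$, the formula $S^*g(z)=\dfrac{g(z)-g(0)}{z}$ yields $(S^*)^nf(z)=\sum_{k\geqslant0}a_{k+n}z^k$, so the $0$-th Fourier coefficient of $(S^*)^nf$ is $a_n$. Viewing $e_i$ inside $H^2_\KK$ as the constant function $z\mapsto e_i$ and integrating over $\T$ gives $\scal{(S^*)^nf,e_i}=\scal{a_n,e_i}_\KK$. Using that all terms are non-negative to interchange the two sums, and that $\{e_i\}_{i\in I}$ is an orthonormal basis of $\KK$, we get
\begin{equation*}
\sum_{i\in I}\sum_{n\geqslant0}\abs{\scal{f,A_N^n(P_Ne_i)}}^2=\sum_{n\geqslant0}\sum_{i\in I}\abs{\scal{a_n,e_i}_\KK}^2=\sum_{n\geqslant0}\norm{a_n}_\KK^2=\norm{f}_{H^2_\KK}^2,
\end{equation*}
and since the norm of $f$ in $N$ coincides with its norm in $H^2_\KK$ this is exactly the Parseval frame identity for $N$.

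There is no essential difficulty here; the only points needing care are the bookkeeping of the identifications (the constant function $e_i$ in $H^2_\KK$ against the vector $e_i\in\KK$; the self-adjointness of $P_N$ and $S^*$-invariance of $N$), and the justification, via non-negativity, of swapping the order of the summations over $i$ and $n$. A slicker way to package the same computation is to observe that, after the reductions above, $\scal{f,A_N^n(P_Ne_i)}=\scal{f,e_i\rchi^n}_{H^2_\KK}$, so the analysis operator of the system $\{A_N^n(P_Ne_i)\}_{i\in I,n\geqslant0}$ is merely the restriction to $N$ of the coefficient map associated with the orthonormal basis $\{e_i\rchi^n\}_{i\in I,n\geqslant0}$ of $H^2_\KK$, which is an isometry on $N$; Parseval is then immediate.
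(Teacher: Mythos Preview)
Your proof is correct. In the paper this proposition is quoted from \cite{ACCP} without proof, so there is no in-paper argument to compare against directly; however, your ``slicker'' observation at the end --- that $\scal{f,A_N^n(P_Ne_i)}=\scal{f,e_i\rchi^n}$, so the system is the orthogonal projection onto $N$ of the orthonormal basis $\{e_i\rchi^n\}_{i\in I,n\geqslant0}$ of $H^2_\KK$ --- is exactly the mechanism the paper uses later in Proposition~\ref{propV}, where it is recorded as the identity $P_N(S^n e_i)=A_N^n(P_Ne_i)$ and combined with the fact that projecting an orthonormal basis yields a Parseval frame. Your explicit Fourier-coefficient computation simply unfolds that same idea.
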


The proof follows by straightforward adaptation of {\cite[Proposition $3.6$]{ACCP}}.

Let us now recall the concept of {\em similar  frames of iterations } defined in \cite[$\S$3]{ACCP}. 
\begin{definition}\label{def-similar-frames}
Given two Hilbert spaces $\HH_1,\HH_2$  and $\{T_1^nv_i\}_{i\in I,n\geqslant0}$ and $\{T_2^nw_i\}_{i\in I,n\geqslant0}$ two frames of iterations in $\HH_1$ and $\HH_2$ respectively, we say that they are similar frames if $T_1$ and $T_2$ are similar with isomorphism $V$ and $V(v_i)=w_i$ for all $i\in I$. We further say they are  unitarily equivalent frames of iterations if $V$ is unitary.
\end{definition}

It is not difficult to see that this defines an equivalence relation among frames of iterations. Further, the following proposition states that every frame of iterations in a Hilbert space has a \textit{corresponding frame} in a model space. For a proof see \cite[Theorem 2.1]{CMS}, \cite[Theorem 3.4]{CHP} and \cite[Theorem $3.9$]{ACCP}.

\begin{proposition}\label{basic}
Let $T\in\cB(\HH)$ and $\{v_i\}_{i\in I}\subseteq\HH$. Then,
\begin{enumerate}[label=(\roman*),ref=(\roman*)]
\item If $\{T^nv_i\}_{i\in I,n\geqslant0}$ is a frame, then it is similar to a frame of the form\begin{equation*}
\{A_N^n(P_Ne_i)\}_{i\in I,n\geqslant0},
\end{equation*}
where $N\subseteq H^2_{\ell^2(I)}$ is a model space and $\{e_i\}_{i\in I}$ is the canonical basis of $\ell^2(I)$.

\item If $\{T^nv_i\}_{i\in I,n\geqslant0}$ is a Parseval frame, then it is unitarily equivalent to a frame of the form
\begin{equation*}
\{A_N^n(P_Ne_i)\}_{i\in I,n\geqslant0},
\end{equation*}
where $N\subseteq H^2_{\ell^2(I)}$ is a model space and $\{e_i\}_{i\in I}$ is the canonical basis of $\ell^2(I)$.
\end{enumerate}
\end{proposition}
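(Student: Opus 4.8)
The plan is to realize $\HH$ concretely inside a vector-valued Hardy space using the analysis (coefficient) operator associated to the frame, and then identify the image of $T$ with a shift compression. First I would set $\KK=\ell^2(I)$ with canonical basis $\{e_i\}_{i\in I}$, and introduce the operator $\Theta:\HH\to H^2_\KK$ defined on vectors by
\begin{equation*}
(\Theta x)(z):=\sum_{n\geqslant0}\Bigl(\sum_{i\in I}\scal{x,T^nv_i}e_i\Bigr)z^n,
\end{equation*}
that is, the $n$-th Fourier coefficient of $\Theta x$ is the $\KK$-valued sequence $(\scal{x,T^nv_i})_{i\in I}$. The frame inequalities for $\{T^nv_i\}_{i\in I,n\geqslant0}$ say precisely that $A\norm{x}^2\leqslant\norm{\Theta x}^2\leqslant B\norm{x}^2$, so $\Theta$ is bounded and bounded below, hence an isomorphism onto its closed range $N:=\Theta(\HH)\subseteq H^2_\KK$; in the Parseval case $A=B=1$ and $\Theta$ is an isometry, so $N$ is closed and $\Theta$ is unitary onto $N$.

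The second step is the intertwining identity. A direct computation using $\scal{Tx,T^nv_i}=\scal{x,(T^*)^{?}\dots}$ is not what is wanted; instead observe that the $n$-th coefficient of $\Theta(Tx)$ is $(\scal{Tx,T^nv_i})_i=(\scal{x,T^{n+1}v_i})_i$, which is the $(n+1)$-st coefficient of $\Theta x$. In other words $\Theta T=S^*\Theta$, since $S^*$ shifts Fourier coefficients down by one (the constant term $(\scal{x,v_i})_i$ being dropped). Consequently $N=\Theta(\HH)$ is invariant under $S^*$, i.e.\ $N$ is a model space. Next I would check that $N$ contains the information of the generators: one computes $\Theta v_i = \sum_{n\geqslant0}(\scal{v_i,T^nv_j})_j z^n$, and comparing with $P_N e_i$ — using that $P_N e_i$ is the unique element of $N$ with $\scal{P_N e_i, \Theta x}=\scal{e_i,\Theta x}=\scal{x, v_i}$ for all $x$, together with $\scal{\Theta v_i,\Theta x}=$ (inner product in $N$) — one identifies, after applying the isomorphism $V:=\Theta$, the vector $V v_i$ with $P_N e_i$. (In the non-Parseval case one works with the frame inner product induced on $N$ via $V$; the cleanest route is to transport the Hilbert structure of $\HH$ to $N$ through $V$ and check both identities hold.) Finally, from $\Theta T=S^*\Theta$ restricted to $N$ one gets $V T V^{-1}=(A_N)^*$...

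— here is the point where care is needed, and I expect the \textbf{main obstacle} to be bookkeeping about which shift appears. The cleanest fix: use instead the "reproducing" normalization $(\Theta x)(z)=\sum_n(\scal{x,T^nv_i})_i z^n$ but note that the natural intertwiner makes $T$ correspond to $S^*|_N$, which by the definition of model space equals $(A_N)^*$; so it is the adjoint frame that is directly modeled. To land on $A_N$ itself (as the statement requires), one either passes to the orthogonal complement $N^\perp$-picture or, more simply, invokes that $\{T^nv_i\}$ is a frame of iterations for $T$ iff the corresponding statement holds for the pair $(A_N,P_Ne_i)$ after identifying $VT=A_N^*V$... and then re-deriving that $\{A_N^n P_N e_i\}$ being a (Parseval) frame — which is exactly Proposition \ref{basicparseval} — forces the similarity $V v_i = P_N e_i$, $V T V^{-1}=A_N$ by matching the frame operators. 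Once the intertwining and the generator identification are both in hand, part (i) follows with $V$ an isomorphism and part (ii) follows with $V$ unitary, since an isometric $\Theta$ with $S^*$-invariant range yields a unitary onto the model space. I would present the Parseval case (ii) in full and then indicate that (i) is obtained by the same construction with "unitary" weakened to "isomorphism", citing the frame bounds $A,B$.
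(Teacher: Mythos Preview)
There is a genuine error in your intertwining step. You claim that the $n$-th Fourier coefficient of $\Theta(Tx)$ is $(\scal{Tx,T^nv_i})_i=(\scal{x,T^{n+1}v_i})_i$, but this equality is false: one has $\scal{Tx,T^nv_i}=\scal{x,T^*T^nv_i}$, and $T^*T^nv_i\neq T^{n+1}v_i$ in general. The relation that actually holds is
\[
\Theta T^* = S^*\Theta,
\]
since the $n$-th coefficient of $\Theta(T^*x)$ is $(\scal{T^*x,T^nv_i})_i=(\scal{x,T^{n+1}v_i})_i$, i.e.\ the $(n{+}1)$-st coefficient of $\Theta x$. This is why you later find yourself tangled about ``which shift appears'': your attempted repair (passing to $(A_N)^*$, invoking Proposition~\ref{basicparseval}, etc.) is chasing a problem created by the wrong intertwiner, and it never produces a clean identification $VTV^{-1}=A_N$ with $V(P_Ne_i)=v_i$.

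The fix is short. From $\Theta T^*=S^*\Theta$ the range $N=\Theta(\HH)$ is $S^*$-invariant, hence a model space. Work instead with the \emph{synthesis} operator $C:=\Theta^*:H^2_{\ell^2(I)}\to\HH$, for which a one-line computation gives $TC=CS$; then $\ker C$ is $S$-invariant, $N=(\ker C)^\perp$, and $V:=C|_N:N\to\HH$ is an isomorphism (unitary in the Parseval case because then $CC^*=I_\HH$). For $f\in N$ write $Sf=P_NSf+P_{\ker C}Sf$ to get $TVf=C(Sf)=C(P_NSf)=V(A_Nf)$, and $V(P_Ne_i)=C(e_i)=v_i$. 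This is exactly the construction the paper records in Section~\ref{estructura} (Definition~\ref{synthesis} and the paragraph following it) and attributes, for Proposition~\ref{basic} itself, to \cite{CMS,CHP,ACCP}.
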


For this reason, frames of the form $\{A_N^n(P_Ne_i)\}_{i\in I,n\geqslant0}$ have been coined as {\em basic frames}.

\section{Existence of frames of iterations }\label{proofs}
In this section, we provide necessary and sufficient conditions for operators to admit frames of iterations.
For a given Hilbert space $\HH$ we denote by $\cF_\HH$ the subset of operators in $\cB(\HH)$ that admit a frame of iterations, and by $\cF^{p}_\HH$ the subset that admit a Parseval frame of iterations. 

Regarding similarities and unitary equivalences, we have the first easy but very useful result:

\begin{proposition}\label{similares}
Let $T_1\in\cB(\HH_1)$ and $T_2\in\cB(\HH_2)$. Then,
\begin{enumerate}[label=(\roman*),ref=(\roman*)]
\item If $T_1$ and $T_2$ are similar and $T_1\in\cF_{\HH_1}$, then $T_2\in\cF_{\HH_2}$.

\item If $T_1$ and $T_2$ are  unitarily equivalent and $T_1\in\cF^p_{\HH_1}$, then $T_2\in\cF^p_{\HH_2}$.
\end{enumerate}
\end{proposition}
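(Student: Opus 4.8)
The plan is to prove both statements directly from the definition of frame of iterations, transporting everything along the given isomorphism (resp.\ unitary). For part (i), suppose $V:\HH_1\to\HH_2$ is an isomorphism with $VT_1V^{-1}=T_2$, and let $\{v_i\}_{i\in I}\subseteq\HH_1$ be such that $\{T_1^nv_i\}_{i\in I,n\geqslant0}$ is a frame of $\HH_1$ with bounds $0<A\leqslant B$. Set $w_i:=Vv_i$. The conjugation relation gives $T_2^nV=VT_1^n$ for all $n\geqslant0$, hence $T_2^nw_i=VT_1^nv_i$. So the candidate frame for $\HH_2$ is simply the image under $V$ of a frame of $\HH_1$, and the first step is the standard fact that the bounded invertible image of a frame is a frame: for $y\in\HH_2$,
\begin{equation*}
\sum_{i\in I,n\geqslant0}\abs{\scal{y,T_2^nw_i}}^2=\sum_{i\in I,n\geqslant0}\abs{\scal{y,VT_1^nv_i}}^2=\sum_{i\in I,n\geqslant0}\abs{\scal{V^*y,T_1^nv_i}}^2,
\end{equation*}
which lies between $A\norm{V^*y}^2$ and $B\norm{V^*y}^2$; then one uses $\norm{(V^*)^{-1}}^{-1}\norm{y}\leqslant\norm{V^*y}\leqslant\norm{V^*}\norm{y}$ to conclude that $\{T_2^nw_i\}_{i\in I,n\geqslant0}$ is a frame of $\HH_2$ with bounds $A\norm{(V^*)^{-1}}^{-2}$ and $B\norm{V^*}^2$. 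Hence $T_2\in\cF_{\HH_2}$.

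For part (ii), the argument is the same but shorter: if $V$ is unitary then $V^*=V^{-1}$ is an isometry, so the displayed chain of equalities shows $\sum_{i\in I,n\geqslant0}\abs{\scal{y,T_2^nw_i}}^2=\sum_{i\in I,n\geqslant0}\abs{\scal{V^*y,T_1^nv_i}}^2=\norm{V^*y}^2=\norm{y}^2$, using that $\{T_1^nv_i\}$ is Parseval and $V^*$ is unitary. Thus $\{T_2^nw_i\}_{i\in I,n\geqslant0}$ is a Parseval frame of $\HH_2$ and $T_2\in\cF^p_{\HH_2}$.

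The only points requiring a little care are bookkeeping: verifying by induction on $n$ that the conjugation $VT_1V^{-1}=T_2$ propagates to powers, i.e.\ $T_2^n=VT_1^nV^{-1}$ (trivial for $n=0$, and the inductive step $T_2^{n+1}=T_2\,T_2^n=VT_1V^{-1}VT_1^nV^{-1}=VT_1^{n+1}V^{-1}$); and noting that the index set $I$ and the enumeration of the iterates are preserved by $V$, so the two families are indexed identically. There is no real obstacle here — the statement is, as the authors say, easy — the content is simply that membership in $\cF_\HH$ is a similarity invariant and membership in $\cF^p_\HH$ is a unitary invariant, which is exactly what one expects. I would present it as the two short computations above, perhaps isolating "the bounded invertible image of a (Parseval) frame is a (Parseval) frame" as the reusable observation that does the work.
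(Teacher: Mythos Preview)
Your proof is correct and follows essentially the same approach as the paper: define $w_i:=Vv_i$, observe $T_2^nw_i=V(T_1^nv_i)$, and invoke the fact that isomorphisms (resp.\ unitaries) preserve frames (resp.\ Parseval frames). The only difference is that you spell out the frame-bound computation and the induction on powers explicitly, whereas the paper simply cites ``frames are preserved by isomorphisms'' and leaves (ii) as analogous.
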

\begin{proof}
We only prove $(i)$ as $(ii)$ is analogous. Suppose then that $T_2=VT_1V^{-1}$ for some isomorphism $V:\HH_1\to\HH_2$ and that $\{T_1^nv_i\}_{i\in I,n\geqslant0}$ is a frame in $\HH_1$. Then define, for each $i\in I$, $w_i:=Vv_i$, and note that
\begin{equation*}
T_2^nw_i=V(T_1^nv_i)
\end{equation*}
holds for all $i\in I$ and $n\geqslant0$. Since frames are preserved by isomorphisms, we get that $\{T_2^nw_i\}_{i\in I,n\geqslant0}$ is a frame in $\HH_2$.
\end{proof}

Combining Propositions \ref{similares}, \ref{basicparseval}, and \ref{basic}, we obtain the following result:

\begin{theorem}\label{cms}
Let $T\in\cB(\HH)$. Then,
\begin{enumerate}[label=(\roman*),ref=(\roman*)]
\item $T\in\cF_{\HH}$ if and only if there exists a model space $N\subseteq H^2_\KK$ for some Hilbert space $\KK$ such that $T$ is  similar to $A_N.$

\item $T\in\cF^{p}_\HH$ if and only if there exists a model space $N\subseteq H^2_\KK$ for some Hilbert space $\KK$ such that $T$ is  unitarily equivalent to $A_N.$
\end{enumerate}
\end{theorem}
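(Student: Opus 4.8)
The plan is to simply assemble the pieces already in hand: Proposition \ref{basic}, Proposition \ref{basicparseval}, and Proposition \ref{similares}. Both statements are of the form ``$T$ admits a (Parseval) frame of iterations $\iff$ $T$ is similar (unitarily equivalent) to some shift compression $A_N$'', so each direction is a one-line implication once the right proposition is invoked. I would prove $(i)$ in full and note $(ii)$ is identical with ``similar'' replaced by ``unitarily equivalent'' and $\cF_\HH$ by $\cF^p_\HH$ throughout.

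For the forward direction of $(i)$: assume $T\in\cF_\HH$, so there is $\{v_i\}_{i\in I}\subseteq\HH$ with $\{T^nv_i\}_{i\in I,n\geqslant0}$ a frame. By Proposition \ref{basic}\,(i) this frame is similar — in the sense of similar frames of iterations recalled above — to a basic frame $\{A_N^n(P_Ne_i)\}_{i\in I,n\geqslant0}$ for some model space $N\subseteq H^2_{\ell^2(I)}$. By definition of similar frames of iterations, this means precisely that $T$ and $A_N$ are similar operators (the isomorphism $V$ carrying $v_i$ to $P_Ne_i$ conjugates $T$ to $A_N$), so we may take $\KK=\ell^2(I)$. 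For the converse, suppose $T$ is similar to $A_N$ for some model space $N\subseteq H^2_\KK$. Fix an orthonormal basis $\{e_i\}_{i\in I}$ of $\KK$. By Proposition \ref{basicparseval}, $\{A_N^n(P_Ne_i)\}_{i\in I,n\geqslant0}$ is a (Parseval, hence in particular a) frame of $N$, so $A_N\in\cF_N$; since $\cF$ is preserved under similarity by Proposition \ref{similares}\,(i), we conclude $T\in\cF_\HH$.

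For $(ii)$ the same argument goes through verbatim: the forward direction uses Proposition \ref{basic}\,(ii), which upgrades ``similar frame of iterations'' to ``unitarily equivalent frame of iterations'' and hence makes $T$ unitarily equivalent to $A_N$; the converse uses Proposition \ref{basicparseval} to get that $\{A_N^n(P_Ne_i)\}$ is a Parseval frame together with Proposition \ref{similares}\,(ii) to transfer the Parseval frame of iterations across a unitary equivalence.

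There is essentially no obstacle here — the theorem is a repackaging statement. The only point requiring a moment of care is unwinding the definition of ``similar frames of iterations'' in exactly the right way: one must observe that the clause ``$T_1$ and $T_2$ are similar with isomorphism $V$ and $V(v_i)=w_i$'' in that definition already bundles in the operator similarity $VT_1V^{-1}=T_2$, so that the conclusion ``$T$ is similar to $A_N$'' is literally part of what Proposition \ref{basic} delivers; no separate verification that $V$ intertwines $T$ and $A_N$ is needed beyond citing that proposition. I would keep the write-up to a few sentences.
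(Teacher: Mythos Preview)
Your proposal is correct and is essentially the same argument as the paper's: forward direction via Proposition \ref{basic}, converse via Proposition \ref{basicparseval} combined with Proposition \ref{similares}. The only cosmetic difference is that you write out part (i) and declare (ii) analogous, whereas the paper does the reverse.
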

\begin{proof}
We prove $(ii)$ since the proof of $(i)$ is  analogous. 

First, if $\{T^nv_i\}_{i\in I,n\geqslant0}$ is a Parseval frame in $\HH$ for some vectors $\{v_i\}_{i\in I}\subseteq\HH$, then by $(ii)$ of Proposition \ref{basic} such frame must be unitarily equivalent to a basic frame $\{A_N^n(P_Ne_i)\}_{i\in I,n\geqslant0}$ where $N\subseteq H^2_{\ell^2(I)}$ is a model space. In particular, this means that $T=VA_NV^{-1}$ for some unitary isomorphism $V:N\to\HH$.

Conversely, suppose $T=VA_NV^{-1}$ for some unitary map $V:N\to\HH$ and a model space $N\subseteq H^2_\KK$. Then, as Proposition \ref{basicparseval} tells us that $A_N\in\cF^p_N$, we can use Proposition \ref{similares} to conclude that $T\in\cF^p_\HH$. 
\end{proof}

When trying to exploit the similarity or unitary equivalence given by Theorem \ref{cms}, there are two properties of $A_N$ that stand out: $A_N$ is a contraction and its adjoint is strongly stable. Recall that given a Hilbert space $\HH$, we say that a bounded operator $T\in\cB(\HH)$ is a {\em contraction} if $\norm{T}\leqslant1$. We say that $T$ is {\em strongly stable} if $\lim_{n\to+\infty}T^nv=0$ holds for all $v\in\HH$. Once these two properties are verified for shift compressions, one of the directions of Theorem \ref{thm:iterationsParseval} can be obtained by showing that the properties are preserved by unitary equivalences.

\begin{proposition}\label{prop:parseval-ida}
Let $T\in\mathcal B(\mathcal H)$. If $T\in\cF^{p}_\HH$, then $T$ is a contraction and $T^*$ is strongly stable.
\end{proposition}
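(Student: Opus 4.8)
The plan is to use Theorem \ref{cms}(ii) to reduce the statement to the case of a shift compression $A_N$ acting on a model space $N \subseteq H^2_\KK$, since $T \in \cF^p_\HH$ means $T$ is unitarily equivalent to such an $A_N$, and being a contraction and having strongly stable adjoint are both properties preserved under unitary equivalence. So it suffices to check: (a) $A_N$ is a contraction, and (b) $(A_N)^*$ is strongly stable.

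For (a), recall $A_N f = P_N S f$ for $f \in N$, where $S$ is the unilateral shift on $H^2_\KK$. Since $S$ is an isometry and $P_N$ is an orthogonal projection, we immediately get $\norm{A_N f} = \norm{P_N S f} \leqslant \norm{S f} = \norm{f}$, so $\norm{A_N} \leqslant 1$.

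For (b), the key observation (stated in the excerpt) is that $(A_N)^* f = S^* f$ for all $f \in N$, because $N$ is $S^*$-invariant. Hence $((A_N)^*)^n f = (S^*)^n f$ for all $n \geqslant 0$ and all $f \in N$, and it remains to show $(S^*)^n f \to 0$ as $n \to +\infty$ for every $f \in H^2_\KK$. Writing $f(z) = \sum_{m \geqslant 0} a_m z^m$ with $\sum_m \norm{a_m}_\KK^2 = \norm{f}^2 < \infty$, one checks by induction that $(S^*)^n f(z) = \sum_{m \geqslant 0} a_{m+n} z^m$, so $\norm{(S^*)^n f}^2 = \sum_{m \geqslant n} \norm{a_m}_\KK^2$, which is the tail of a convergent series and therefore tends to $0$. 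Thus $(S^*)^n$ is strongly stable on $H^2_\KK$, and restricting to $f \in N$ gives that $(A_N)^*$ is strongly stable.

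I do not expect a genuine obstacle here: the argument is a short composition of facts already assembled in the Preliminares, and the only mild point of care is verifying the formula for $(S^*)^n$ on Fourier coefficients (a one-line induction from $S^* f(z) = (f(z) - f(0))/z$) and noting that the unitary equivalence $T = V A_N V^{-1}$ transfers both conclusions: $\norm{T} = \norm{A_N} \leqslant 1$ and $T^{*n} = V (A_N)^{*n} V^{-1} \to 0$ pointwise since $V$ is bounded.
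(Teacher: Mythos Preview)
Your proof is correct and follows essentially the same approach as the paper: reduce via Theorem~\ref{cms}(ii) to the shift compression $A_N$, bound $\norm{A_N}\leqslant 1$ using that $S$ is an isometry and $P_N$ a projection, and verify strong stability of $(A_N)^*=S^*|_N$ by computing $\norm{(S^*)^nf}^2=\sum_{m\geqslant n}\norm{a_m}_\KK^2$ as the tail of a convergent series, then transfer both conclusions through the unitary equivalence. The only cosmetic difference is that the paper writes $\norm{T}=\norm{A_N}$ directly before bounding it, whereas you bound $\norm{A_Nf}$ first and transfer afterward.
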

\begin{proof}
 From $(ii)$ of Theorem \ref{cms}, we know that there exists a Hilbert space $\KK$ and a model space $N\subseteq H^2_\KK$ such that $T=VA_NV^{-1}=VA_NV^*$ for a unitary map $V:N\to\HH$. 

First, since $V$ is unitary, $\norm{T}=\norm{A_N}=\norm{P_NS|_N}\leqslant\norm{P_N}\norm{S} = 1$ and hence, $T$ is a contraction. 

Second, as $(A_N)^*=S^*|_N$, then for $f=\sum\limits_{n=0}^{+\infty}a_n\rchi^n$ we have
\begin{equation*}
\norm{(S^*)^nf}^2=\sum\limits_{m\geqslant n}\|a_m\|_\KK^2
\end{equation*}
which goes to zero when $n\to +\infty$ as it is the tail of a convergent sum. The strong stability of $T^*$ then follows from the identity $(T^*)^n=V(S^*|_N)^nV^*$ for all $n\geqslant0$.
\end{proof}

The converse of Proposition \ref{prop:parseval-ida} was obtained in \cite{AP17}. We give a proof for completeness.
By Theorem \ref{cms}, it is enough to show that contractions with strongly stable adjoints are unitarily equivalent to shift compressions in a model space. The following result uses Rota functional model \cite{R}, later refined by De Branges and Rovnyak in \cite{BR64}. Since its proof will be important in what follows, we include it below. 
\begin{theorem}[{\cite[p. 18]{N}}]\label{niko}
Let $T\in\mathcal B(\mathcal H)$. If $T$ is a contraction and $T^*$ is strongly stable, then there exists a Hilbert space $\KK$ and a model space $N\subseteq H^2_{\KK}$ such that $T$ is unitarily equivalent to $A_N$ .
\end{theorem}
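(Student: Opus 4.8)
The plan is to build the classical Rota / de Branges--Rovnyak functional model for $T$ explicitly. Since $T$ is a contraction, $I-TT^*\geqslant0$, so the defect operator $D_{T^*}:=(I-TT^*)^{1/2}$ is a well-defined positive operator; let $\KK:=\overline{D_{T^*}\HH}$ be the associated defect space. Define $\Phi:\HH\to H^2_\KK$ by
\begin{equation*}
(\Phi x)(z):=\sum_{n\geqslant0}\big(D_{T^*}(T^*)^nx\big)z^n.
\end{equation*}
I will then show, in order, that $\Phi$ is a well-defined isometry, that $N:=\mathrm{ran}\,\Phi$ is a model space, and that $\Phi$ (seen as a unitary onto $N$) intertwines $T$ with $A_N$.

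For the first step, note that the $n^{\text{th}}$ coefficient of $\Phi x$ lies in $\KK$, so it suffices to check square-summability. Here is where both hypotheses enter:
\begin{equation*}
\sum_{n\geqslant0}\norm{D_{T^*}(T^*)^nx}^2=\sum_{n\geqslant0}\scal{(I-TT^*)(T^*)^nx,(T^*)^nx}=\sum_{n\geqslant0}\big(\norm{(T^*)^nx}^2-\norm{(T^*)^{n+1}x}^2\big),
\end{equation*}
a telescoping series with partial sums $\norm{x}^2-\norm{(T^*)^{N+1}x}^2$; the strong stability of $T^*$ (which is a contraction since $\norm{T^*}=\norm{T}\leqslant1$) makes the remainder vanish, so the sum equals $\norm{x}^2$. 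Thus $\Phi x\in H^2_\KK$ and $\norm{\Phi x}=\norm{x}$.

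Next I would record the intertwining relation $\Phi T^*=S^*\Phi$, which is a one-line coefficient comparison: applying $S^*g(z)=(g(z)-g(0))/z$ shifts the coefficient sequence of $\Phi x$ by one, yielding exactly the coefficient sequence $\{D_{T^*}(T^*)^{n+1}x\}_{n\geqslant0}$ of $\Phi(T^*x)$. Put $N:=\mathrm{ran}\,\Phi$. Since $\Phi$ is an isometry on the complete space $\HH$, $N$ is a closed subspace of $H^2_\KK$; and for $f=\Phi x\in N$ we have $S^*f=S^*\Phi x=\Phi(T^*x)\in N$, so $N$ is $S^*$-invariant, i.e.\ a model space. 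Viewing $\Phi$ now as a unitary from $\HH$ onto $N$, the relation $\Phi T^*=S^*\Phi$ becomes $\Phi T^*\Phi^*=S^*|_N$; taking adjoints and using the identity $(A_N)^*=S^*|_N$ recalled in Section~\ref{Pre}, we get $\Phi T\Phi^*=\big(S^*|_N\big)^*=A_N$, so $T$ is unitarily equivalent to $A_N$.

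The argument is short once the model map $\Phi$ is written down, so there is no single deep obstacle; the point that genuinely requires care — and the reason the statement is true rather than merely a contractive estimate — is the isometry computation, which only yields $\norm{\Phi x}\leqslant\norm{x}$ without the strong stability of $T^*$, and which needs $\norm{T}\leqslant1$ for $D_{T^*}$ to exist and for the telescoped terms to be nonnegative.
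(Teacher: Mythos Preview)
Your proof is correct and follows essentially the same route as the paper: both construct the Rota/de Branges--Rovnyak model map $\Phi x=\sum_{n\geqslant0}D_{T^*}(T^*)^n x\,\rchi^n$ into $H^2_\KK$ with $\KK=\overline{D_{T^*}\HH}$, verify it is an isometry via the telescoping sum (using strong stability of $T^*$), establish the intertwining $S^*\Phi=\Phi T^*$, and take adjoints to obtain the unitary equivalence $T\cong A_N$ with $N=\mathrm{ran}\,\Phi$.
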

\begin{proof}
As $\norm{T}\leqslant1$ we have that $I_\HH-TT^*$ is a self-adjoint positive semidefinite operator. Set $D_{T^*}:=(I_\HH-TT^*)^{1/2}$, $\KK=\overline{D_{T^*}(\HH)}$ and define $L:\HH\to H^2_\KK$ as
\begin{equation*}
Lv:=\sum_{n\geqslant0}D_{T^*}(T^*)^nv\:\rchi^n.
\end{equation*}
We claim that $L$ is an isometry:
\begin{align*}
\norm{Lv}^2=\sum_{n\geqslant0}\norm{D_{T^*}(T^*)^nv}_\HH^2&=\sum_{n\geqslant0}\scal{D_{T^*}(T^*)^nv,D_{T^*}(T^*)^nv}
\\
&=\sum_{n\geqslant0}\scal{(I_\HH-TT^*)(T^*)^nv,(T^*)^nv}
\\
&=\sum_{n\geqslant0}\left(\norm{(T^*)^nv}^2-\norm{(T^*)^{n+1}v}^2\right)=\norm{v}^2,
\end{align*}
where the last equality follows from the fact that $(T^*)^nv\to0$ as $n\to+\infty$.

Also, note that $S^*L=LT^*$. Indeed, for every $v\in\HH$
\begin{equation*}
S^*Lv=\sum_{n\geqslant0}D_{T^*}(T^*)^{n+1}v\:\rchi^n=\sum_{n\geqslant0}D_{T^*}(T^*)^n(T^*v)\:\rchi^n=LT^*v.
\end{equation*}
Therefore, the image of $L$ is an $S^*$-invariant subspace of $H^2_\KK$ and we call it $N$. Moreover if $V:N\to\HH$ is the inverse of $L:\HH\to N$, we have that $T^*=V S^*|_N V^{-1}$ and that $V$ is a unitary map. Taking adjoints, we get $T=VA_NV^{-1}$.
\end{proof}

For a contraction $T$, the operators
\[
D_T = (I_{\mathcal{H}} - T^*T)^{1/2}
\quad \text{and} \quad
D_{T^*} = (I_{\mathcal{H}} - TT^*)^{1/2}
\]
are called the \emph{defect operators} of $T$ and $T^*$ respectively, the closures of their ranges
\[
\mathcal{D}_T = \overline{D_T(\mathcal{H})}
\quad \text{and} \quad
\mathcal{D}_{T^*} = \overline{D_{T^*}(\mathcal{H})}
\]
are called the \emph{defect spaces}, and the dimensions $\dim \mathcal{D}_T$ and $\dim \mathcal{D}_{T^*}$ are the \emph{defect indices}; see \cite{NFBK10}.

As a consequence of Theorem \ref{niko}, we derive the remaining part of Theorem \ref{thm:iterationsParseval}. 

\begin{proposition}\label{prop:parseval-vuelta}
Let $T\in\mathcal B(\mathcal H)$. If $T$ is a contraction and $T^*$ is strongly stable, then $T\in\cF^{p}_\HH$.
\end{proposition}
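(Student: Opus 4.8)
The plan is to reduce Proposition~\ref{prop:parseval-vuelta} to Theorem~\ref{niko} via Theorem~\ref{cms}, so that essentially no new work is required. The statement is precisely the converse of Proposition~\ref{prop:parseval-ida}, and all the machinery is already in place.

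\medskip

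\textbf{Step 1: Apply Rota's functional model.} Suppose $T\in\cB(\HH)$ is a contraction with $T^*$ strongly stable. By Theorem~\ref{niko}, there is a Hilbert space $\KK$ (namely the defect space $\KK=\overline{D(\HH)}$ with $D=(I_\HH-TT^*)^{1/2}$) and a model space $N\subseteq H^2_\KK$ such that $T$ is unitarily equivalent to the shift compression $A_N$.

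\textbf{Step 2: Invoke Theorem~\ref{cms}(ii).} Unitary equivalence of $T$ to $A_N$ for a model space $N$ is exactly the right-hand side of the characterization in part $(ii)$ of Theorem~\ref{cms}. Hence $T\in\cF^{p}_\HH$; that is, $T$ admits a Parseval frame of iterations. One can even be explicit: writing $V:N\to\HH$ for the unitary with $T=VA_NV^{-1}$ and $\{e_i\}_{i\in I}$ for an orthonormal basis of $\KK$, Proposition~\ref{basicparseval} gives that $\{A_N^n(P_Ne_i)\}_{i\in I,n\geqslant0}$ is a Parseval frame of $N$, and then $\{T^nv_i\}_{i\in I,n\geqslant0}$ with $v_i:=V(P_Ne_i)$ is a Parseval frame of $\HH$, since $T^nv_i=VA_N^n(P_Ne_i)$ and unitaries preserve Parseval frames (Proposition~\ref{similares}(ii)).

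\medskip

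There is no real obstacle here: the content lies entirely in Theorem~\ref{niko} (Rota's dilation / functional model), which has already been proved in the excerpt, and in the dictionary between basic frames and shift compressions provided by Propositions~\ref{basicparseval}, \ref{basic} and Theorem~\ref{cms}. Combining Propositions~\ref{prop:parseval-ida} and~\ref{prop:parseval-vuelta} then yields Theorem~\ref{thm:iterationsParseval} in full. If anything deserves a sentence of care, it is only the bookkeeping that the defect space $\KK$ may be any separable Hilbert space (possibly finite-dimensional, possibly zero when $T$ is a co-isometry — though strong stability of $T^*$ rules out a nonzero co-isometric part), and that in all cases an orthonormal basis of $\KK$ is at most countable, so the resulting family is a legitimate (countably generated) frame of iterations.
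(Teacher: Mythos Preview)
Your proof is correct and follows exactly the same two-step route as the paper: apply Theorem~\ref{niko} to obtain a unitary equivalence between $T$ and a shift compression $A_N$, then invoke part~$(ii)$ of Theorem~\ref{cms} to conclude $T\in\cF^{p}_\HH$. The additional explicit description of the resulting Parseval frame $\{T^nV(P_Ne_i)\}_{i\in I,n\geqslant0}$ is a nice bonus but not needed for the statement.
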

\begin{proof}
If $T$ is a contraction and $T^*$ is strongly stable then Theorem \ref{niko} tells us that $T$ is unitarily equivalent to a shift compression $A_N$ for some model space $N\subseteq H^2_\KK$. Then, by $(ii)$ of Theorem \ref{cms} we have that $T\in\cF^{p}_\HH$.
\end{proof}

By combining Propositions \ref{prop:parseval-ida} and \ref{prop:parseval-vuelta}, we obtain Theorem \ref{thm:iterationsParseval}. With this result, we can provide another proof for Theorem \ref{existencia} which differs from the one given in \cite{CMPP20}.

\begin{proof} [Proof of Theorem \ref{existencia}]
If $T\in\cF_\HH$, by $(i)$ of Theorem \ref{cms}, we have that there exists a Hilbert space $\KK$ and a model space $N\subseteq H^2_\KK$ such that $T=VA_NV^{-1}$ where $V:N\to\HH$ is an isomorphism. As $A_N\in\cF^p_N$ by Proposition \ref{basicparseval}, applying Proposition \ref{prop:parseval-ida} to $A_N$ yields that $A_N$ is a contraction with strongly stable adjoint. But then $T$ is similar to a contraction and its adjoint is strongly stable.

Assume now that $T$ is similar to a contraction $L:\HH_1\to\HH_1$ and $T^*$ is strongly stable. Thus $T=C^{-1}LC$ for an isomorphism $C:\HH\to\HH_1$. Then, we have $(L^{*})^n=(C^*)^{-1}(T^*)^nC^*$ for all $n\geqslant0$, and thus $L^{*}$ is also strongly stable. Therefore, applying Proposition \ref{prop:parseval-vuelta} to $L$, we have that $L\in\cF^{p}_{\HH_1}$. Finally, since $T$ and $L$ are similar, we can use Proposition \ref{similares} to conclude that $T\in\cF_\HH$.
\end{proof}

\subsection{Examples of frames of iterations}
As a consequence of Theorem \ref{existencia} we have the following results about the existence of frame of iterations for bounded operators: first, as noted in \cite{AP17}, if an operator $T:\HH\to\HH$ is unitary, then it cannot have frames of iterations as its adjoint is not strongly stable. Second, extending a result from \cite{CH23}, if an operator $T:\HH\to\HH$ is hypercyclic (i.e. there exists a vector $v\in\HH$ for which $\{T^nv\}_{n\geqslant0}$ is dense in $\HH$) then it cannot have frames of iterations since contractions cannot be hypercyclic and this property is preserved by similarity.

In \cite{AP17} the authors construct a Parseval frame for any operator satisfying the hypothesis of Theorem \ref{thm:iterationsParseval}.
It is not difficult to modify that example to construct a bigger family of frames of iterations.
We recall first that a collection of vectors $\{x_i\}_{i\in I}\subseteq\HH$ is a
{\em Riesz basis} of $\HH$ if it is complete and there exist constants $A,B>0$ such that
\begin{equation*}
A\sum_{i\in I}|c_i|^2 \leqslant \|\sum_{i\in I} c_ix_i\|^2 \leqslant B\sum_{i\in I}|c_i|^2,
\end{equation*}
for all sequence of complex numbers $\{c_i\}_{i\in I}$ with finite support.

\begin{proposition}\label{examples}
Assume that $T\in\cB(\HH)$ is a contraction and $T^*$ is strongly stable. Let  $\KK=\overline{D_{T^*}(\HH)}$. If $\GG=\{g_i\}_{i\in I}$ is a frame of $\KK$, then $\{T^nD_{T^*} g_i\}_{i\in I, n\geqslant0}$ is a frame of $\HH$. 
Furthermore, if $\GG$ is a Riesz basis or an orthonormal basis of $\KK$ then $\{T^n D_{T^*} g_i\}_{i\in I,n\geqslant0}$ also forms a Riesz basis or an orthonormal basis of $\HH$.
\end{proposition}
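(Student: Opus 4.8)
The plan is to use the functional model from the proof of Theorem~\ref{niko} to reduce everything to a concrete computation in a model space, where the analogous statement is essentially Proposition~\ref{basicparseval} together with standard facts about how frames behave under isometries. Recall that in the proof of Theorem~\ref{niko}, with $D=(I_\HH-TT^*)^{1/2}$ and $\KK=\overline{D(\HH)}$, the map $Lv=\sum_{n\geqslant0}D(T^*)^nv\,\rchi^n$ is an isometry from $\HH$ onto a model space $N\subseteq H^2_\KK$ satisfying $S^*L=LT^*$, equivalently $L T = A_N L$, so $L T^n = A_N^n L$ for all $n\geqslant0$. The key observation is that $L(Dg)$ is, for any $g\in\KK$, precisely $P_N g$, where $g$ is viewed as a constant function in $H^2_\KK$: indeed the constant function $g$ has Fourier expansion with zeroth coefficient $g$ and all others zero, while $L(Dg)=\sum_{n\geqslant0}D(T^*)^nDg\,\rchi^n$, and one checks that the orthogonal projection of the constant $g$ onto $N$ agrees with this series by computing inner products against $L v = \sum_n D(T^*)^n v\,\rchi^n$ for arbitrary $v$; the identity $\scal{g, L v}_{H^2_\KK} = \scal{g, Dv}_\KK = \scal{Dg, v}_\HH = \scal{L(Dg), Lv}_{H^2_\KK}$ does the job since $L$ is an isometry and the $Lv$ span $N$. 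Hence, for an orthonormal basis $\{e_i\}$ of $\KK$, $L(De_i)=P_N e_i$ and so $L(T^n D e_i)=A_N^n(P_N e_i)$.

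With this in hand, the orthonormal-basis case is immediate: $\{A_N^n(P_N e_i)\}_{i\in I,n\geqslant0}$ is a Parseval frame of $N$ by Proposition~\ref{basicparseval}, and applying the unitary $V=L^{-1}:N\to\HH$ transports it to the Parseval frame $\{T^nDe_i\}$ of $\HH$; this actually recovers Theorem~\ref{thm:iterationsParseval} with an explicit generating set. For a general frame $\GG=\{g_i\}_{i\in I}$ of $\KK$ with bounds $A,B$, I would write $g_i=\sum_{j} c_{ij} e_j$ and note that $\{g_i\}$ being a frame of $\KK$ with bounds $A,B$ is equivalent to the synthesis-type operator built from the matrix $(c_{ij})$ being bounded with the corresponding norm estimates. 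The cleanest route is: the frame operator bound translates, under $L$, into the statement that $\{P_N g_i\}$ is a frame of $N$ with bounds $A,B$ (projecting a frame of $\KK$ onto the subspace $N$ keeps it a frame of $N$ with the same bounds, because for $f\in N$ we have $\scal{f,P_Ng_i}_{H^2_\KK}=\scal{f,g_i}_{H^2_\KK}$ and the latter sum is controlled by $A\|f\|^2$ and $B\|f\|^2$ since the constant functions $g_i$ form a frame of $\KK\supseteq$ the relevant components — more precisely, for $f\in N\subseteq H^2_\KK$, $\scal{f,g_i}_{H^2_\KK}=\scal{\hat f(0),g_i}_\KK$ is \emph{not} immediately controlled, so instead one uses the iterated version). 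The correct statement to prove is that $\{A_N^n(P_Ng_i)\}_{i\in I,n\geqslant0}$ is a frame of $N$ with bounds $A,B$: expanding each $g_i$ in the basis $\{e_j\}$ and using that $\{A_N^n(P_Ne_j)\}$ is Parseval, one gets that the analysis operator of $\{A_N^n(P_Ng_i)\}$ is $C^* W$ where $W$ is the (unitary, by Parseval) analysis operator of $\{A_N^n(P_Ne_j)\}$ and $C$ is the bounded operator with matrix $(c_{ij})$, and $AI\leqslant CC^*\leqslant BI$ is exactly the frame condition on $\GG$. Transporting by the unitary $V=L^{-1}$ then gives that $\{T^nDg_i\}$ is a frame of $\HH$ with bounds $A,B$.

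The Riesz-basis and orthonormal-basis refinements follow from the same transport principle once one knows the corresponding statement in $N$: $\{A_N^n(P_Ne_i)\}$ is an orthonormal basis of $N$ when $N=H^2_\KK$ itself, but for a proper model space $N$ it is merely Parseval, so the clean statement is that $\{A_N^n(P_Ng_i)\}$ is a Riesz basis (resp.\ orthonormal basis) of $N$ iff $C$ above is an isomorphism (resp.\ unitary) \emph{and} $W$ is surjective onto $\ell^2$, i.e.\ the Parseval frame $\{A_N^n(P_Ne_j)\}$ is actually an orthonormal basis — which forces $N=H^2_\KK$. In other words the Riesz/orthonormal conclusion is genuinely a statement about the case where the defect model is all of $H^2_\KK$; I would phrase the proposition's last sentence accordingly, or simply observe that when $\GG$ is a Riesz (orthonormal) basis of $\KK$ the map $De_i\mapsto Dg_i$ extended by $T^n$ is implemented by an isomorphism (unitary), so the image of a Riesz basis is a Riesz basis. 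The main obstacle, and the step I would be most careful about, is the identification $L(Dg)=P_Ng$ and keeping track of which inner product space the constant functions $g_i$ live in; everything else is the routine observation that frames, Riesz bases, and orthonormal bases are preserved under the unitary $L$ and that applying a bounded change-of-coordinates operator $C$ with $AI\leqslant CC^*\leqslant BI$ to a Parseval frame yields a frame with bounds $A,B$.
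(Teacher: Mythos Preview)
Your route through the functional model is correct for the frame assertion, but it is much heavier than the paper's argument. The paper never invokes Theorem~\ref{niko}, the identity $L(Dg)=P_Ng$, or a change-of-basis matrix; it computes directly. For $f\in\HH$, self-adjointness of $D$ gives
\[
\sum_{i\in I}\abs{\scal{f,T^nDg_i}}^2=\sum_{i\in I}\abs{\scal{D(T^*)^nf,g_i}}^2,
\]
and the frame inequality for $\GG$ in $\KK$ sandwiches the right-hand side between $A\norm{D(T^*)^nf}^2$ and $B\norm{D(T^*)^nf}^2$. Since $\norm{D(T^*)^nf}^2=\norm{(T^*)^nf}^2-\norm{(T^*)^{n+1}f}^2$, summing over $n$ telescopes to $\norm{f}^2$ by strong stability of $T^*$. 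That is the entire proof. Your model-space argument recovers the same bounds, and the identification $L(Dg)=P_Ng$ is genuinely correct and worth knowing, but the paper's three-line computation avoids all of it.

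On the Riesz/orthonormal clause your suspicion is justified, and your final hand-wave (``the map $De_i\mapsto Dg_i$ extended by $T^n$ is implemented by an isomorphism, so the image of a Riesz basis is a Riesz basis'') does not save it, because there is no underlying Riesz basis to transport: as you observed, $\{A_N^n(P_Ne_j)\}$ is an orthonormal basis of $N$ only when $N=H^2_\KK$. In fact the clause is false as stated. Take $T$ diagonal on $\HH=\ell^2(\N)$ with entries $\lambda_k\in\D\setminus\{0\}$; then $T$ is a contraction, $T^*$ is strongly stable, $D$ is diagonal with entries $\sqrt{1-|\lambda_k|^2}$, and $\KK=\ell^2(\N)$. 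With $\GG=\{e_k\}$ the orthonormal basis of $\KK$, one gets $T^nDe_k=\lambda_k^n\sqrt{1-|\lambda_k|^2}\,e_k$: for each fixed $k$ this is an infinite family of nonzero vectors in a one-dimensional subspace, so the collection is a Parseval frame (the telescoping computation above confirms it) but is neither orthonormal nor a Riesz basis. The paper's ``same argument'' only reproduces the frame inequality and says nothing about minimality; the correct scope for the Riesz/orthonormal conclusion is the pure-isometry case treated in Lemma~\ref{lem:wandering-frame}, where the orthogonal wandering decomposition supplies the missing independence.
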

\begin{proof}
Let $n\geqslant 0$ and $f\in\HH$, then using that $D_{T^*}$ is self-adjoint, we have
\begin{equation*}
\sum_{i\in I}\abs{\scal{ f,T^nD_{T^*}g_i}}^2=\sum_{i\in I}\abs{\scal{ D_{T^*}(T^*)^nf, g_i}}^2.
\end{equation*}
If $\GG$ is a frame of $\KK$ with bounds $A,B>0$, then
\begin{equation}\label{DD}
A\norm{D_{T^*}(T^*)^nf}^2\leqslant\sum_{i\in I}\abs{\scal{ D_{T^*}(T^*)^nf,g_i}}^2\leqslant B\norm{D_{T^*}(T^*)^nf}^2.
\end{equation}
Now, for every $n\geqslant 0$, as we already computed  in the proof of Theorem \ref{niko},
\begin{align*}
\norm{D_{T^*}(T^*)^nf}^2 
&= 
\norm{(T^*)^nf}^2 -\norm{(T^*)^{n+1}f}^2. 
\end{align*}
Then, summing in $n\geqslant 0$ in \eqref{DD} and using the strong stability of $T^*$, we get that
\begin{equation*}
A\norm{f}^2\leqslant\sum_{n\geqslant 0 }\sum_{i\in I}\abs{\scal{ f,T^nD_{T^*}g_i}}^2\leqslant B\norm{f}^2,
\end{equation*}
which shows that $\{T^nD_{T^*} g_i\}_{i\in I,n\geqslant 0}$ is a frame of $\HH$.
The  arguments for the Riesz basis and orthonormal basis cases are analogous.
\end{proof}

Note that in Proposition \ref{examples} if $\GG$ is a Parseval frame of $\KK$, then the frame of iterations of $T$ generated by $\{D_{T^*}g_{i}\}_{i\in I}$ is also Parseval. 

We will now give another construction of frames of iterations.
Let us first prove the follo\-wing lemma. Recall that a pure isometry $L$ is an isometry whose unitary part $\bigcap_{n\geqslant 0}L^n(\HH)=\{0\}.$ 

\begin{lemma}\label{lem:wandering-frame}
Let $L\in\cB(\HH)$ be a pure isometry and $W=\HH\ominus L(\HH)$ the associated wandering subspace. If $\GG=\{g_i\}_{i\in I}$ is a frame of $W$, then $\{L^n g_i\}_{i\in I,n\geqslant0}$ is a frame of $\HH$.
Furthermore, if $\GG$ is a Riesz basis or an orthonormal basis of $W$ then $\{L^n g_i\}_{i\in I,n\geqslant0}$ also forms a Riesz basis or an orthonormal basis of $\HH$.
\end{lemma}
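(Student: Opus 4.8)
\textbf{Proof plan for Lemma \ref{lem:wandering-frame}.}

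The plan is to exploit the Wold decomposition of a pure isometry: since $L$ is a pure isometry with wandering subspace $W=\HH\ominus L(\HH)$, the space $\HH$ decomposes as an orthogonal direct sum $\HH=\bigoplus_{n\geqslant0}L^n(W)$, and each $L^n|_W:W\to L^n(W)$ is a unitary isomorphism onto its image because $L$ is an isometry. This is exactly the structure that makes the computation separate cleanly across the index $n$.

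First I would fix $f\in\HH$ and decompose it as $f=\sum_{n\geqslant0}f_n$ with $f_n\in L^n(W)$ and $\norm{f}^2=\sum_{n\geqslant0}\norm{f_n}^2$. The key observation is that for fixed $n$ and any $i$, $\scal{f,L^ng_i}=\scal{f_n,L^ng_i}$, since $L^ng_i\in L^n(W)$ and the summands $f_m$ for $m\neq n$ are orthogonal to $L^n(W)$. Next, writing $f_n=L^nh_n$ for a unique $h_n\in W$ with $\norm{h_n}=\norm{f_n}$ (using that $L^n$ is an isometry), and using $\scal{L^nh_n,L^ng_i}=\scal{h_n,g_i}$ (again because $L^n$ is an isometry, so $(L^n)^*L^n=I$), we get
\begin{equation*}
\sum_{n\geqslant0}\sum_{i\in I}\abs{\scal{f,L^ng_i}}^2=\sum_{n\geqslant0}\sum_{i\in I}\abs{\scal{h_n,g_i}}^2.
\end{equation*}
Applying the frame inequalities for $\GG$ in $W$ to each $h_n$ and summing over $n$ then yields
\begin{equation*}
A\sum_{n\geqslant0}\norm{h_n}^2\leqslant\sum_{n\geqslant0}\sum_{i\in I}\abs{\scal{h_n,g_i}}^2\leqslant B\sum_{n\geqslant0}\norm{h_n}^2,
\end{equation*}
and since $\sum_{n\geqslant0}\norm{h_n}^2=\sum_{n\geqslant0}\norm{f_n}^2=\norm{f}^2$, the frame bounds $A,B$ for $\{L^ng_i\}_{i\in I,n\geqslant0}$ follow. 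For the Riesz basis and orthonormal basis statements, I would run the analogous argument on the synthesis side: given scalars $\{c_{i,n}\}$, the vector $\sum_{i,n}c_{i,n}L^ng_i=\sum_n L^n\big(\sum_i c_{i,n}g_i\big)$ is an orthogonal sum over $n$, so its squared norm is $\sum_n\norm{\sum_i c_{i,n}g_i}^2$, and the Riesz (resp. Bessel-plus-lower, resp. orthonormality) estimates for $\GG$ on $W$ transfer termwise.

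I do not expect a serious obstacle here; the only point requiring a little care is the interchange of the double sum over $n$ and $i$, which is justified because all terms are nonnegative (Tonelli), and the bookkeeping that the decomposition $\HH=\bigoplus_{n\geqslant0}L^n(W)$ is genuinely orthogonal — this is precisely where purity of $L$ (equivalently $\bigcap_{n\geqslant0}L^n(\HH)=\{0\}$) is used, via the Wold decomposition. Everything else is a direct consequence of $L$ being an isometry.
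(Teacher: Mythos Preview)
Your argument is correct and self-contained: the Wold decomposition $\HH=\bigoplus_{n\geqslant0}L^n(W)$ lets the frame (and Riesz, orthonormal) inequalities separate over $n$, and each piece reduces to the hypothesis on $\GG$ inside $W$. The interchange of sums and the use of purity are handled exactly as you say.

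The paper takes a different route. Rather than computing directly via Wold, it observes that for an isometry $L$ one has $I_\HH-LL^*=P_W$, so the defect operator of $L$ is $D=P_W$ and the defect space is $W$; purity of $L$ then gives that $L^*$ is strongly stable. This places $L$ squarely inside the hypotheses of Proposition~\ref{examples}, and since $Dg_i=P_Wg_i=g_i$ for $g_i\in W$, the lemma follows as an immediate specialization. The gain of the paper's approach is economy and unification: once Proposition~\ref{examples} is in hand, the lemma costs nothing extra and is visibly a special case of the general defect-operator construction. The gain of your approach is independence: it avoids the defect-operator machinery and the appeal to strong stability entirely, relying only on the orthogonal wandering structure, which makes the mechanism (and the exact preservation of the bounds $A,B$) completely transparent.
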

\begin{proof}
 Since  $L$ is an isometry, it follows that $LL^* = P_{L(\HH)}$. Then, $I_{\mathcal{H}} - LL^* = I_{\HH} - P_{L(\HH)} = P_W$. Thus, the defect space of $L^*$ is equal to $W$. Moreover, since $L$ is a pure isometry, then $L^*$ is strongly stable. See for example \cite[Section 1.3]{RR85}.

Therefore, by Proposition \ref{examples},
if $\{g_i\}_{i\in I}$ is a frame of $W$, then $\{L^ng_i\}_{i\in I, n\geqslant 0}$ is a frame of $\HH$ because $D_{L^*}g_i = (I_{\HH} - LL^*)^{1/2} g_i = P_W g_i= g_i,$ for every $i\in I$.
The same holds for the Riesz and orthonormal basis cases.
\end{proof}

Now, we can apply this result to construct frames of any model subspace in $H^2_\KK$, in particular of $H^2_\KK$ itself. 

\begin{proposition}\label{propV}
Let $\KK$ be a Hilbert space and $N\subseteq H^2_\KK$ a model space. 
If $\{g_i\}_{i\in I}$ is a frame of $\KK$, then $\{A^n_N(P_N g_i)\}_{i\in I, n\geqslant0}$ is a frame of $N$.

In particular, $\{S^n g_i\}_{i\in I, n\geqslant0}$ is a frame of $H^2_\KK$.
\end{proposition}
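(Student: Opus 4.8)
The plan is to evaluate the frame sum $\sum_{i\in I,\,n\geqslant0}\abs{\scal{f,A_N^n(P_Ng_i)}}^2$ directly for an arbitrary $f\in N$ and to sandwich it between $A\norm{f}^2$ and $B\norm{f}^2$, where $A\leqslant B$ are frame bounds for $\GG$ in $\KK$. The one structural ingredient I would record at the outset is that, since $N$ is $S^*$-invariant, $(A_N)^*=S^*|_N$ (as noted in Section~\ref{Pre}), and hence by iteration $(A_N^n)^*=(S^*)^n|_N$; in particular $(S^*)^nf\in N$ for every $f\in N$ and $n\geqslant0$.

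With this, for each $i\in I$ and $n\geqslant0$ I would use that $P_N$ is self-adjoint together with $(S^*)^nf\in N$ to write
\[
\scal{f,A_N^n(P_Ng_i)}=\scal{(S^*)^nf,P_Ng_i}=\scal{P_N(S^*)^nf,g_i}=\scal{(S^*)^nf,g_i}.
\]
Expanding $f=\sum_{m\geqslant0}a_m\rchi^m$ with $a_m\in\KK$ and $\sum_{m\geqslant0}\norm{a_m}_\KK^2=\norm{f}^2$, we have $(S^*)^nf=\sum_{m\geqslant0}a_{m+n}\rchi^m$, whose $0$-th Fourier coefficient is $a_n$; since $g_i$ sits in $H^2_\KK$ as a constant function, this gives $\scal{(S^*)^nf,g_i}=\scal{a_n,g_i}_\KK$. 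Summing the squared moduli over $i$ and $n$ (all terms are nonnegative, so the order of summation is irrelevant),
\[
\sum_{i\in I,\,n\geqslant0}\abs{\scal{f,A_N^n(P_Ng_i)}}^2=\sum_{n\geqslant0}\sum_{i\in I}\abs{\scal{a_n,g_i}_\KK}^2.
\]
Applying the frame inequalities for $\GG$ in $\KK$ to each $a_n$ and summing over $n\geqslant0$, the right-hand side is then bounded below by $A\sum_{n\geqslant0}\norm{a_n}_\KK^2=A\norm{f}^2$ and above by $B\norm{f}^2$; this is exactly the assertion that $\{A_N^n(P_Ng_i)\}_{i\in I,n\geqslant0}$ is a frame of $N$, with the same bounds as $\GG$.

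The final ``in particular'' claim is the special case $N=H^2_\KK$, where $P_N=I_{H^2_\KK}$ and $A_N=S$; alternatively it follows from Lemma~\ref{lem:wandering-frame}, since $S$ is a pure isometry on $H^2_\KK$ whose wandering subspace $H^2_\KK\ominus S(H^2_\KK)$ is the subspace of constant functions, identified with $\KK$. I do not expect any genuine obstacle: the computation is the frame analogue of the proof of Proposition~\ref{basicparseval}, with an orthonormal basis replaced by a frame, and the only point needing a little care is the bookkeeping around the $S^*$-invariance of $N$ used to identify $(A_N^n)^*$ with $(S^*)^n|_N$ and to keep $(S^*)^nf$ inside $N$.
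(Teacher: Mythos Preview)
Your proof is correct. It differs in structure from the paper's: the paper first invokes Lemma~\ref{lem:wandering-frame} to obtain that $\{S^ng_i\}_{i\in I,n\geqslant0}$ is a frame of $H^2_\KK$, then projects this frame onto $N$ and uses the identity $P_N(S^ng_i)=A_N^n(P_Ng_i)$ (which follows from the $S^*$-invariance of $N$). You instead bypass the ambient frame altogether and compute the frame sum directly via $(A_N^n)^*=(S^*)^n|_N$ and Fourier coefficients, recovering the ``in particular'' claim at the end as the degenerate case $N=H^2_\KK$. Both routes ultimately rest on the same identity---your adjoint relation is dual to the paper's projection identity---and both yield the same frame bounds $A,B$; your argument is more self-contained (it does not need Lemma~\ref{lem:wandering-frame} or Proposition~\ref{examples}), whereas the paper's is more modular and reuses the machinery already built.
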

\begin{proof}
Since $\KK$ is the wandering subspace associated to $S$ in $H^2_\KK$ and $S$ is a pure isometry, we can apply Lemma \ref{lem:wandering-frame} to conclude that $\{S^ng_i\}_{i\in I, n\geqslant0}$, is a frame of $H^2_\KK$. Then, the orthogonal projection onto $N$, $\{P_N(S^ng_i)\}_{i\in I, n\geqslant0}$, is a frame of $N$. 
Thus, the result follows by noticing that for every $n\geqslant0$ and $i\in I$, $P_N(S^ng_i) = (P_NS)^n P_N g_i = A^n_N(P_Ng_i)$.
\end{proof}

Combining this proposition with Theorem \ref{cms} we can explicitly construct frames of iterations for any $T\in\cF_\HH$.

\begin{corollary}
Let $T\in\cF_\HH$ and $N\subseteq H^2_\KK$ be a model space such that $T$ is similar to $A_N$ through the isomorphism $C:N\rightarrow\HH$. Then, $\{T^n (C(P_N g_i))\}_{i\in I, n\geqslant0}$ is a frame of $\HH$, where $\{g_i\}_{i\in I}$ is an arbitrary frame of 
$\KK.$
\end{corollary}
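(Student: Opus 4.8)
The plan is to combine Proposition \ref{propV} with the similarity $T = C A_N C^{-1}$ exactly as in the proof of Proposition \ref{similares}. First I would fix an arbitrary frame $\{g_i\}_{i\in I}$ of $\KK$ and invoke Proposition \ref{propV} to conclude that $\{A_N^n(P_N g_i)\}_{i\in I, n\geqslant 0}$ is a frame of $N$ (with the same bounds as $\GG$, though that is not needed for the statement). Next I would set $v_i := C(P_N g_i) \in \HH$ for each $i\in I$, and observe that since $T C = C A_N$ (equivalently $T = C A_N C^{-1}$), an easy induction gives $T^n v_i = T^n C (P_N g_i) = C A_N^n (P_N g_i)$ for all $n\geqslant 0$ and all $i\in I$.

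Then the argument concludes exactly as in Proposition \ref{similares}: $C:N\to\HH$ is an isomorphism, and isomorphisms map frames to frames (if $\{x_i\}$ is a frame of $N$ then $\{C x_i\}$ is a frame of $\HH$, with bounds scaled by $\norm{C}$ and $\norm{C^{-1}}$). Applying this to the frame $\{A_N^n(P_N g_i)\}_{i\in I, n\geqslant 0}$ of $N$ yields that $\{C A_N^n(P_N g_i)\}_{i\in I, n\geqslant 0} = \{T^n(C(P_N g_i))\}_{i\in I, n\geqslant 0}$ is a frame of $\HH$, which is precisely the claim.

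There is no real obstacle here: the corollary is a direct bookkeeping combination of Proposition \ref{propV} (the explicit frame in the model space) and the transfer-by-similarity mechanism already established in Proposition \ref{similares}. The only point requiring a line of care is the intertwining identity $T^n C = C A_N^n$, which follows immediately from $TC = CA_N$ by induction, and the elementary fact that bounded invertible maps preserve the frame property — both of which are standard and already implicitly used earlier in the paper. Thus the proof is essentially two sentences: apply Proposition \ref{propV}, then push the resulting frame forward through $C$ using $T^n(C(P_N g_i)) = C(A_N^n(P_N g_i))$.
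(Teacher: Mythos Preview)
Your argument is correct and matches the paper's approach exactly: the corollary is stated without proof in the paper precisely because it follows immediately by combining Proposition \ref{propV} with the similarity transfer of Proposition \ref{similares}, which is exactly what you do. The only minor remark is that the paper phrases the combination as ``Proposition \ref{propV} plus Theorem \ref{cms}'' rather than Proposition \ref{similares}, but the content is identical.
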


\section{Frame Index of a Bounded Operator}\label{index}
\subsection{Frame indexes}
We now turn our attention to the problem of determining the minimum number of generating vectors required to obtain a frame of iterations by a given operator. 

Recall that, for a Hilbert space $\HH$ and $T\in\cB(\HH)$, we say that a collection of non-zero vectors $\{v_i\}_{i\in I}\subseteq\HH$ is a set of {\em generators} for $T$ if $\{T^nv_i\}_{i\in I,n\geqslant0}$ is a frame of $\HH$. We will further say that $\{v_i\}_{i\in I}$ is a set of {\em Parseval generators} for $T$ if $\{T^nv_i\}_{i\in I,n\geqslant0}$ is a Parseval frame of $\HH$.
\begin{definition}
The  frame index of $T$, denoted $\gamma(T)$, is defined as:
\begin{equation*}
\gamma(T)=\min\{ d\in\N :\text{there exists a set of $d$ generators for $T$}\},
\end{equation*}
if $T$ possesses a frame of iterations that admits a finite generating set. We say that $\gamma(T)=0$ if $T$ does not possess a frame of iterations, and $\gamma(T)=+\infty$ if $T$ possesses a frame of iterations, but none admit a finite generating set.

The {\em Parseval frame index}, $\gamma_p(T)$ has the same definition, but requiring that the frame of iterations is a Parseval frame.
\end{definition}

For the Parseval index we are able to obtain a precise formula and along the way show that such index can be achieved using a set of generators which are linearly independent. The next result shows item (i) of Theorem \ref{thm-index-intro}.

\begin{theorem}\label{nikc2}
Let $T\in\cB(\HH)$. If $T$ has a Parseval frame of iterations, then
\begin{equation*}
\gamma_p(T)=\dim\overline{(I_\HH-TT^*)(\HH)}.
\end{equation*}
Moreover, the index $\gamma_p(T)$ is attained with linearly independent generators.

\end{theorem}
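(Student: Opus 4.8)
The plan is to put $D=(I_\HH-TT^*)^{1/2}$ and $\KK=\overline{D(\HH)}$. Since $D$ is positive and self-adjoint, $\ker D=\ker D^2$, whence $\overline{(I_\HH-TT^*)(\HH)}=\overline{D^2(\HH)}=\overline{D(\HH)}=\KK$; so it suffices to prove $\gamma_p(T)=\dim\KK$ and to exhibit a linearly independent Parseval generating set of this cardinality.

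For the bound $\gamma_p(T)\leqslant\dim\KK$ I would argue as follows. Since $T\in\cF^p_\HH$, Proposition \ref{prop:parseval-ida} gives that $T$ is a contraction with $T^*$ strongly stable, so Proposition \ref{examples} applies to $T$. Fix an orthonormal basis $\{g_i\}_{i\in I}$ of $\KK$, so $|I|=\dim\KK$, and apply Proposition \ref{examples} with $\GG=\{g_i\}_{i\in I}$; being an orthonormal basis, $\GG$ is in particular a Parseval frame of $\KK$, so $\{T^nDg_i\}_{i\in I,n\geqslant0}$ is a Parseval frame (indeed an orthonormal basis) of $\HH$. It remains to check that the generators $\{Dg_i\}_{i\in I}$ are nonzero and linearly independent. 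Since $D$ is self-adjoint, $\ker D=(\overline{D(\HH)})^\perp=\KK^\perp$, so $D$ is injective on $\KK$; hence $Dg_i\neq0$, and any finite relation $\sum_ic_iDg_i=0$ gives $D\big(\sum_ic_ig_i\big)=0$ with $\sum_ic_ig_i\in\KK$, forcing $\sum_ic_ig_i=0$ and all $c_i=0$. This yields a Parseval generating set for $T$ of cardinality $\dim\KK$ consisting of linearly independent vectors.

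For the reverse inequality $\gamma_p(T)\geqslant\dim\KK$, let $\{v_i\}_{i\in I}$ be an arbitrary Parseval generating set for $T$. By part (ii) of Proposition \ref{basic}, $\{T^nv_i\}_{i\in I,n\geqslant0}$ is unitarily equivalent to a basic frame $\{A_N^n(P_Ne_i)\}_{i\in I,n\geqslant0}$ with $N\subseteq H^2_{\ell^2(I)}$ a model space and $\{e_i\}_{i\in I}$ the canonical basis; in particular $T$ is unitarily equivalent to $A_N$, so $I_\HH-TT^*$ and $I_N-A_NA_N^*$ are unitarily equivalent and $\dim\KK=\dim\overline{(I_N-A_NA_N^*)(N)}$. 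The crux is to identify this defect operator: using $A_N^*=S^*|_N$, $A_N=P_NS|_N$ and the identity $SS^*f=f-f(0)$ on $H^2_{\ell^2(I)}$ (equivalently, $I-SS^*$ is the orthogonal projection onto the constant functions $\ell^2(I)\subseteq H^2_{\ell^2(I)}$), one computes, for $f\in N$, that $(I_N-A_NA_N^*)f=f-P_NSS^*f=f-P_N(f-f(0))=P_N(f(0))$. Hence the range of $I_N-A_NA_N^*$ is contained in $P_N(\ell^2(I))$, and since $P_N$ is continuous and $\ell^2(I)$ is densely spanned by the $|I|$ vectors $\{e_i\}_{i\in I}$, the closure of that range has Hilbert-space dimension at most $|I|$. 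Therefore $\dim\KK\leqslant|I|$ for every Parseval generating set, i.e. $\gamma_p(T)\geqslant\dim\KK$.

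Combining the two inequalities gives $\gamma_p(T)=\dim\KK=\dim\overline{(I_\HH-TT^*)(\HH)}$, and the set $\{Dg_i\}_{i\in I}$ constructed above realizes the index with linearly independent generators. I expect the main obstacle to be the middle step of the lower bound, namely computing the defect operator of the compressed shift $A_N$ and observing that its range is squeezed inside $P_N(\ell^2(I))$, so that the multiplicity $\dim\ell^2(I)=|I|$ of the ambient Hardy space governs the defect index; the remaining steps are essentially bookkeeping with Propositions \ref{prop:parseval-ida}, \ref{examples} and \ref{basic}.
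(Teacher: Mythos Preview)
Your proof is correct. The lower bound argument is essentially identical to the paper's: both pass via Proposition~\ref{basic}(ii) to a model space $N\subseteq H^2_{\ell^2(I)}$, compute $(I_N-A_NA_N^*)f=P_N(f(0))$, and conclude that the defect rank is bounded by~$|I|$.

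For the upper bound and the linear independence, you take a genuinely more direct route than the paper. The paper invokes Theorem~\ref{niko} to realise $T$ as a shift compression $A_{N_o}$ on a model space $N_o\subseteq H^2_{\KK_o}$ with $\KK_o=\overline{D(\HH)}$, then transports the basic frame $\{A_{N_o}^n(P_{N_o}e_i)\}$ back to $\HH$ through the unitary; linear independence is obtained by proving $\KK_o\cap N_o^\perp=\{0\}$, so that $P_{N_o}|_{\KK_o}$ is injective. You bypass the model-space construction entirely by applying Proposition~\ref{examples} to an orthonormal basis $\{g_i\}$ of $\KK$, obtaining the Parseval generators $\{Dg_i\}$ directly, and you derive linear independence from the elementary fact that $\ker D=\KK^\perp$, so $D|_\KK$ is injective. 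Your approach is shorter and stays in $\HH$; the paper's approach has the advantage of making explicit the link between the defect space and the ambient multiplicity in the model, which it then reuses later (e.g.\ in Section~\ref{optimal}).
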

\begin{proof}
Recall that  $D_{T^*}=(I_{\HH}-TT^*)^{1/2}$.
As an initial step, we will show that if $\{w_i\}_{i\in I}$ is a set of Parseval generators, then $\#I\geqslant\dim\overline{(I_\HH-TT^*)(\HH)}$. To see this, first note  that if $\{T^nw_i\}_{i\in I,n\geqslant0}$ is a Parseval frame, by Proposition \ref{basic}, we know that there exists a model space $N\subseteq H^2_{\ell^2(I)}$ and a unitary operator $V:N\to\HH$ such that $T=VA_NV^{-1}=VA_NV^*$. Then, on one hand
\begin{equation*}
D_{T^*}^2=I_\HH-TT^*=VV^*-VA_NV^*VS^*|_NV^*=V(I_N-A_NS^*|_N)V^*.
\end{equation*}
On the other hand, if $f\in N$ is written as $f=\sum_{n\geqslant0}a_n\rchi^n$ with $\{a_n\}_{n\geqslant0}\subseteq\KK$ then
\begin{align*}
(I_N-A_NS^*|_N)f=P_N(I_{H^2_{\KK}}-SS^*)f&=P_N\left(\sum_{n\geqslant0}a_n\rchi^n-\sum_{n\geqslant1}a_n\rchi^n\right)
\\
&=P_N(a_0)=\sum_{i\in I}\scal{a_0,e_i}P_N(e_i),
\end{align*}
where $\{e_i\}_{i\in I}$ is the canonical basis of $\ell^2(I)$. Thus, the image of $I_N-A_NS^*|_N$ is spanned by $\{P_N(e_i)\}_{i\in I}$, which is a set with at most $\#I$ elements. Altogether,
\begin{equation*}
\#I\geqslant\dim\overline{(I_N-A_NS^*|_{N})(N)}=\dim\overline{(I_\HH-TT^*)(\HH)}=\dim\overline{D_{T^*}^2(\HH)}. 
\end{equation*}
Now, since $T\in\cF^{p}_{\HH}$, by Proposition \ref{prop:parseval-ida}, $T$ must be a contraction and $T^*$ strongly stable. Then, Theorem \ref{niko} tells us that $T$ must be unitarily equivalent to a shift compression $A_{N_o}$ where $N_o\subseteq H^2_{\KK_o}$ is a model space with $\KK_o:=\overline{D_{T^*}(\HH)}$. Mapping a basic frame $\{A_{N_o}^n(P_{N_o}e_i)\}_{i\in I,n\geqslant0}$ of $N_o$ through the unitary operator that gives the equivalence as in Proposition \ref{similares}, we can construct a Parseval frame in $\HH$ of the form $\{T^nv_i\}_{i\in I,n\geqslant0}$ where $\#I=\dim\KK_o=\dim\overline{D_{T^*}(\HH)}$. Since $D_{T^*}$ is self adjoint, then $\overline{D_{T^*}(\HH)}=\overline{D_{T^*}^2(\HH)}$, which implies
\begin{equation*}
\#I=\dim \overline{D_{T^*}^2(\HH)}=\dim\overline{(I_\HH-TT^*)(\HH)}.
\end{equation*}

Let us end by showing that the set of generators $\{v_i\}_{i\in I}$ we obtained is actually linearly independent.To this end, it suffices to show that the set of vectors $\{P_{N_o}e_i\}_{i\in I}$ are linearly independent when $\{e_i\}_{i\in I}$ is an orthonormal basis of $\KK_o$. Indeed, as $\{v_i\}_{i\in I}$ are the image under an isomorphism (in fact a unitary map) of $\{P_{N_o}e_i\}_{i\in I}$ then, linear independence of $\{P_{N_o}e_i\}_{i\in I}$ implies that of $\{v_i\}_{i\in I}$.

The result  follows if we show that $\KK_o\cap N^\perp_o=\{0\}$ since, in such case, we would have that $P_{N_o}|_{\KK_o}$ is injective and thus $\{P_{N_o}e_i\}_{i\in I}$ is linearly independent.

To see this, recall that $N_o$ is the image of the map $L:\HH\to H^2_{\KK_o}$ defined as
\begin{equation*}
Lv=\sum_{n\geqslant0}D_{T^*}(T^*)^nv\:\rchi^n,\quad v\in\HH.
\end{equation*}
 Take now $k\in\KK_o$ such that $k\perp N_o$. We then have that
\begin{equation*}
\textstyle\scal{k,\sum_{n\geqslant0}D_{T^*}(T^*)^nv\:\rchi^n}_{H^2_{\KK_o}}=0
\end{equation*}
for all $v\in\HH$. But $\textstyle\scal{k,\sum_{n\geqslant0}D_{T^*}(T^*)^nv\:\rchi^n}_{H^2_{\KK_o}}=\scal{k,D_{T^*}v}_\HH$, thus
\begin{equation*}
\scal{k,D_{T^*}v}_\HH=0
\end{equation*}
for all $v\in\HH$, or equivalently $k\in(D_{T^*}(\HH))^\perp=\KK_o^\perp$. Hence $k=0$, and we are finished.
\end{proof}

Let us remark that the inequality $\# I\geqslant\dim\overline{(I_\HH-TT^*)(\HH)}$, where $I$ is the index set of a generating set of vectors, can also be deduced from the arguments given in \cite{CMPP20} which again differ from the ones we use here.

The first attempt to determine the value of the frame index for a general operator appeared in \cite{CMS}. In that work, the authors focused on studying the frame index for a model operator, that is, the compression of the shift acting on a model space. Since every operator that admits a frame of iterations is similar to a model operator, and since the index is preserved under similarity, it suffices to study this particular case. Using deep results from complex analysis and operator theory, such as the Corona theorem and the Gelfand transform in the algebra \(H^{\infty}\), they obtained a lower bound for the index, which they conjectured to be sharp.

We use a different approach and obtain the following result. Recall that when a bounded operator \(T\) admits a frame of iterations, i.e. \(\gamma(T) > 0\), then, by Theorem~\ref{existencia}, \(T\) must be similar to a contraction \(Q \in \cB(\KK)\), and \(T^*\) must be strongly stable. It is straightforward to see that \(T^*\) is similar to \(Q^*\) and that strong stability is preserved under similarity. Consequently, \(Q\) is a contraction and \(Q^*\) is strongly stable, which, by Theorem~\ref{thm:iterationsParseval}, implies that \(\gamma_p(Q) > 0\). On the other hand, as these indexes are preserved by similarity, we have
\begin{equation*}
0<\gamma(T)=\gamma(Q)\leqslant\gamma_p(Q)=\dim\overline{(I_{\KK}-QQ^*)(\KK)}.
\end{equation*}
Moreover, this holds for any contraction $Q$ similar to $T$.

Further, if we take a frame of iterations $\{T^nv_i\}_{i\in I,n\geqslant0}$ with $\#I=\gamma(T)$ generators we know from $(i)$ of Proposition \ref{basic} that there exists a model space $N\subseteq H^2_{\ell^2(I)}$ such that $A_N$ is similar to $T$. Now, as $\{A_N^n(P_Ne_i)\}_{i\in I,n\geqslant0}$ is a Parseval frame of $N$, cf. Proposition \ref{basicparseval}, we have that
\begin{equation*}
\gamma(T)=\#I\geqslant\gamma_p(A_N).
\end{equation*}

We also note that it is enough to consider contractions acting in $\HH$ because the index is preserved by similarity.
These arguments prove the following proposition.
\begin{proposition}\label{general index}
Given a bounded operator $T\in\cB(\HH)$ we have: 
\begin{align*}
\gamma(T) &=\min\;\{\gamma_p(Q) :Q\in\cF^{p}_\HH, Q\text{ similar to } T\}\\
&=\min\;\{\dim\overline{(I-QQ^*)(\HH)} :Q\in\cF^{p}_\HH, Q
\text{ similar to } T\}.
\end{align*}
Here we take $\min\{\emptyset\}=0$.
\end{proposition}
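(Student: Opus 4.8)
The plan is to prove both equalities simultaneously, splitting according to whether $T$ admits a frame of iterations and observing that the second equality is immediate from Theorem~\ref{nikc2}, which identifies $\gamma_p(Q)$ with $\dim\overline{(I-QQ^*)(\KK)}$ for every $Q\in\cF^{p}_\KK$. Suppose first that $T\notin\cF_\HH$, so that $\gamma(T)=0$ by definition. I would argue that the index set appearing in the right-hand minimum is empty, so that the minimum is $\min\emptyset=0$. By Theorem~\ref{existencia}, either $T$ is not similar to a contraction or $T^*$ is not strongly stable. In the first case no $Q$ similar to $T$ is a contraction, hence none lies in $\cF^{p}_\KK$ by Proposition~\ref{prop:parseval-ida}. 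In the second case, for any $Q=VTV^{-1}$ we have $Q^*=(V^*)^{-1}T^*V^*$, so $Q^*$ is similar to $T^*$; since strong stability is preserved by similarity, $Q^*$ is not strongly stable and again $Q\notin\cF^{p}_\KK$. Thus both minima equal $0=\gamma(T)$.

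Now assume $T\in\cF_\HH$, so $\gamma(T)>0$. I would first check that the index set is nonempty: by Theorem~\ref{existencia}, $T$ is similar to a contraction $Q_0\in\cB(\KK_0)$ with $T^*$ strongly stable; then $Q_0^*$ is similar to $T^*$, hence strongly stable, so $Q_0\in\cF^{p}_{\KK_0}$ by Theorem~\ref{thm:iterationsParseval} and $Q_0$ is an admissible competitor. For the bound $\gamma(T)\leqslant\gamma_p(Q)$ for each admissible $Q$: by definition (and Theorem~\ref{nikc2} in the infinite case) $Q$ admits a Parseval frame of iterations with exactly $\gamma_p(Q)$ generators, in particular a frame of iterations with that many generators; applying $(i)$ of Proposition~\ref{similares} — whose proof preserves the index set — transports this to a frame of iterations of $T$ with the same number of generators, so $\gamma(T)\leqslant\gamma_p(Q)$. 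Taking the minimum over admissible $Q$ gives $\gamma(T)\leqslant\min\{\gamma_p(Q):\dots\}$, which in particular forces the right-hand minimum to be $+\infty$ whenever $\gamma(T)=+\infty$.

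For the reverse inequality, assume $\gamma(T)$ is finite (the case $\gamma(T)=+\infty$ is covered by the previous step), and pick a frame of iterations $\{T^nv_i\}_{i\in I,n\geqslant0}$ with $\#I=\gamma(T)$. By $(i)$ of Proposition~\ref{basic} there is a model space $N\subseteq H^2_{\ell^2(I)}$ with $A_N$ similar to $T$, and Proposition~\ref{basicparseval} exhibits the Parseval frame $\{A_N^n(P_Ne_i)\}_{i\in I,n\geqslant0}$ of $N$ with $\#I=\gamma(T)$ generators; hence $A_N\in\cF^{p}_N$ and $\gamma_p(A_N)\leqslant\gamma(T)$. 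Since $A_N$ is admissible, the right-hand minimum is $\leqslant\gamma_p(A_N)\leqslant\gamma(T)$, and combining the two inequalities yields the first equality. The second equality then follows by replacing each $\gamma_p(Q)$ by $\dim\overline{(I-QQ^*)(\KK)}$ via Theorem~\ref{nikc2}. I do not anticipate a genuine obstacle here: the statement is a synthesis of Theorems~\ref{thm:iterationsParseval}, \ref{existencia}, \ref{nikc2} and Propositions~\ref{basic}, \ref{basicparseval}, \ref{similares}, and the only points requiring care are the correct handling of the degenerate values $\gamma(T)\in\{0,+\infty\}$ together with the $\min\emptyset=0$ convention, and the verification that ``similar to $T$ and lying in $\cF^{p}$'' is exactly the right membership condition (i.e.\ that similarity forces $Q^*$ to inherit strong stability from $T^*$).
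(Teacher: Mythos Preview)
Your proof is correct and follows essentially the same approach as the paper: both directions rest on the invariance of $\gamma$ under similarity, the inequality $\gamma(Q)\leqslant\gamma_p(Q)$, and the construction of the model-space competitor $A_N$ via Proposition~\ref{basic} together with Proposition~\ref{basicparseval}. Your treatment is in fact somewhat more careful than the paper's in separating out the degenerate cases $\gamma(T)=0$ and $\gamma(T)=+\infty$, but the underlying argument is the same.
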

Let us now illustrate, by means of examples, how these indexes $\gamma(T)$ and $\gamma_p(T)$ coincide or differ.

\begin{example}\label{example}
We begin with a specific case where $0<\gamma(T)<\gamma_p(T)=+\infty$. To that end, we will consider the operator and frames introduced in \cite{ACMT}. Let $\{\lambda_k\}_{k\in\N}$ be a sequence in the complex unit disc $\mathbb{D}$ satisfying $\sum_{k\in\N}(1-\abs{\lambda_k}^2)<\infty$ and the {\em Carleson condition}
\begin{equation*}
\inf_{k\in\N}\prod_{j\neq k}\frac{\abs{\lambda_k-\lambda_j}}{1-\overline{\lambda_k}\lambda_j}>0
\end{equation*}
and define $T:\ell^2(\N)\to\ell^2(\N)$ as $Te_k=\lambda_ke_k$ for all $k\in\N$, where $\{e_k\}_{k\in\N}$ is the canonical basis of $\ell^2(\N)$. It is proved in \cite[Theorem 3.6]{ACMT} that there exists a vector $f\in\ell^2(\N)$ such that $\{T^nf\}_{n\geqslant0}$ is a frame of $\ell^2(\N)$. Thus, $\gamma (T)=1$. 

On the other hand, it is easily seen that $\norm{T}=\sup\{\abs{\lambda_k}:k\in\N\}$. Moreover, since $\sum_{k\in\N}(1-\abs{\lambda_k}^2)<\infty$, it holds that $|\lambda_k|\to 1$ as $k\to+\infty$ and thus $\norm{T}=1$. This means that $T$ is a contraction and, as $T\in\mathcal F_{\ell^2(\N)}$, we have $T^*$ is strongly stable. Therefore, by Proposition \ref{prop:parseval-vuelta}, $T\in\mathcal{F}^p_{\ell^2(\N)}$. Finally, Theorem \ref{nikc2} gives us that $\gamma_p(T)=\dim\overline{(I_{\ell^2(\N)}-TT^*)(\ell^2(\N))}$ and a direct calculation shows that $\dim\overline{(I_{\ell^2(\N)}-TT^*)(\ell^2(\N))}=+\infty$.

Further, we can slightly adjust the previous construction to obtain, for 
each natural number $N>1$, a contraction $T_N:\ell^2(\N)\to\ell^2(\N)$ 
satisfying $\gamma(T_N)=N$ and $\gamma_p(T_N)=+\infty$. Indeed, using the 
same Carleson sequence $\{\lambda_k\}_{k\in\N}$, define
\[
    T_N
    :=\sum_{j=1}^{N-1}\lambda_1 P_j
     +\sum_{j=N}^{\infty}\lambda_{\,j-(N-1)} P_j,
\]
where $P_j$ denotes the projection onto the $j$-th coordinate. Then 
$\dim\ker(T_N-\lambda_1 I)=N$, and hence $\gamma(T_N)\ge N$: if one tries 
to generate a frame by iterating fewer than $N$ vectors under $T_N$, there 
always exists a nonzero vector in $\ker(T_N-\lambda_1 I)$ orthogonal to 
all such iterates.

To show that $\gamma(T_N)$ is exactly $N$, set the operators $T_{N,1}:=\sum_{j=1}^{N-1}\lambda_1P_j$ and $T_{N,2}:=\sum_{j=N}^\infty\lambda_{j-(N-1)}P_j$. Let $e_1,\dots,e_{N-1}$ be the first $N-1$ vectors of the canonical basis. Since $\abs{\lambda_1}<1$, then $C_1=\sum_{n=0}^\infty\abs{\lambda_1}^{2n}$ is finite and so, for $v\in\text{span}\{e_1,\dots,e_{N-1}\}$ we have that 

\begin{align*}
\sum_{n\geqslant0}\sum_{i=1}^{N-1}\abs{\scal{v,(T_{N,1})^ne_i}}^2&=\sum_{n\geqslant0}\sum_{i=1}^{N-1}\abs{\scal{v,(\lambda_1)^ne_i}}^2=\sum_{n\geqslant0}\abs{\lambda_1}^{2n}\sum_{i=1}^{N-1}\abs{\scal{v,e_i}}^2
\\
&=\sum_{n\geqslant0}\abs{\lambda_1}^{2n}\norm{v}^2=C_1\norm{v}^2.
\end{align*}

Thus, $\{(T_{N,1})^ne_i\}_{i=1,\dots,N-1,n\geqslant0}$ is a frame of $\text{span}\{e_1,\dots,e_{N-1}\}$.

Next, for $T_{N,2}$, note that, up to ignoring the first $N-1$ coordinates, we are in the same situation as the initial $T$. Hence, again by \cite{ACMT}, we can take $f\in\ell^2(\N)\ominus\text{span}\{e_1,\dots,e_{N-1}\}$ so that $\{(T_{N,2})^nf\}_{n\geqslant0}$ is a frame of $\ell^2(\N)\ominus\text{span}\{e_1,\dots,e_{N-1}\}$. 

Altogether, $\{(T_{N,1})^ne_i\}_{i=1,\dots,N-1,n\geqslant0}\cup\{(T_{N,2})^nf\}_{n\geqslant0}$ forms a frame of $\ell^2(\N)$. Noting that $(T_{N,1})^ne_i=(T_N)^ne_i$ for all $i=1,\dots,N-1$, $n\geqslant0$ and also $(T_{N,2})^nf=(T_N)^nf$ for all $n\geqslant0$, we have that $\{e_1,\dots,e_{N-1},f\}$ is a generator set for $T_N$ and thus $\gamma(T_N)=N$. Also, we still have $\gamma_p(T_N)=\dim\overline{(I_{\ell^2(\N)}-T_N(T_N)^*)(\ell^2(\N))}=+\infty$ and thus $\gamma(T_N)=N<\gamma_p(T_N)=+\infty$.

Next, note that these arguments also show that if $T_{\infty}=\lambda I:\ell^2\to\ell^2$ for some $\abs{\lambda}<1$, then $\gamma(T_{\infty})=\gamma_p(T_{\infty})=+\infty$.

The cases where $\gamma_p(T)$ is finite and positive, so that either $0<\gamma(T)=\gamma_p(T)<+\infty$ or $0<\gamma(T)<\gamma_p(T)<+\infty$, can be addressed by using techniques and examples from \cite{CMS}.

Finally, let us remark that it is also possible for an operator $T$ to have a frame of iterations but not a Parseval frame of iterations, i.e. $\gamma(T)>0=\gamma_p(T)$. This situation occurs whenever $T$ is similar to a strongly stable contraction (so $\gamma(T)>0$ by Theorem \ref{coro}) but is not a contraction itself (so $\gamma_p(T)=0$ by Theorem \ref{thm:iterationsParseval}). A possible example is to take $T:\ell^2(\N)\to\ell^2(\N)$ with $Te_1=2e_2$, $Te_2=0$ and $Te_j=\lambda_je_j$ when $j\geqslant3$ and where $\{\lambda_j\}_{j=3}^\infty$ is some Carleson sequence in $\D$. Thus $\norm{T}=2$ and $T$ is not a contraction, but reasoning as before, we can take $f\in\ell^2(\N)\ominus\text{span}\{e_1,e_2\}$ such that $\{T^ne_1\}_{n\geqslant0}\cup\{T^nf\}_{n\geqslant0}$ forms a frame and thus $\gamma(T)>0$.
\end{example}

Frames of iterations for {\em normal} operators with {\em finite index} were completely characterized and all possible collections of generators were obtained in \cite{ACCMP17,ACMT,AP17,CMPP20,CMS21}. From those results and reasoning like in Example \ref{example} we can conclude that a normal operator has Parseval index either $0$ or infinite, or in other words that a normal operator can never have a finite number of vectors generating a Parseval frame of iterations.

The fact that in the previous example we have $\norm{T}=1$ is a consequence of $T$ having a Parseval frame of iterations on the one hand, and, on the other hand, a general frame of iterations with finite generators. Indeed, in \cite{AP17} it was shown that frames of iterations generated by a finite set of vectors force the operator to have norm at least $1$.
Altogether, we have the following:

\begin{proposition}\label{norma1}
Let $\HH$ be an infinite dimensional Hilbert space and $T\in\cF^p_\HH$. Assume further that $\gamma(T)<+\infty$. Then $\norm{T}=1$.
\end{proposition}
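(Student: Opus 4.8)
The plan is to combine two facts that are already available in the excerpt. On one side, since $T\in\cF^p_\HH$, Proposition \ref{prop:parseval-ida} tells us that $T$ is a contraction, so $\norm{T}\leqslant1$; this gives one of the two inequalities for free. On the other side, we need the lower bound $\norm{T}\geqslant1$, and this is exactly where the hypothesis $\gamma(T)<+\infty$ enters: having a frame of iterations generated by finitely many vectors forces the operator to have norm at least $1$. The excerpt attributes this latter fact to \cite{AP17}, so the cleanest route is simply to invoke it. Combining $\norm{T}\leqslant1$ with $\norm{T}\geqslant1$ yields $\norm{T}=1$.

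Thus the core of the write-up is short: first observe that $\gamma(T)<+\infty$ means there is a finite set $\{v_1,\dots,v_d\}\subseteq\HH$ such that $\{T^nv_i\}_{1\leqslant i\leqslant d,\,n\geqslant0}$ is a frame of $\HH$; then cite the result of \cite{AP17} to conclude $\norm{T}\geqslant1$; finally, use $T\in\cF^p_\HH$ together with Proposition \ref{prop:parseval-ida} to get $\norm{T}\leqslant1$, so that $\norm{T}=1$. For completeness one might want to recall the reason behind the $\norm{T}\geqslant1$ bound rather than cite it as a black box: if $\norm{T}<1$, then for a finite generating set the frame operator $\sum_{i=1}^d\sum_{n\geqslant0}\scal{\,\cdot\,,T^nv_i}T^nv_i$ is trace-class (its trace is $\sum_{i,n}\norm{T^nv_i}^2\leqslant\sum_i\norm{v_i}^2\sum_n\norm{T}^{2n}<+\infty$), hence compact; but the frame operator of a frame is invertible, and an invertible compact operator cannot exist on the infinite-dimensional space $\HH$ — contradiction. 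This uses crucially that $\HH$ is infinite-dimensional, which is part of the hypothesis.

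I do not expect any genuine obstacle here: the statement is essentially a corollary of Proposition \ref{prop:parseval-ida} and the norm bound from \cite{AP17}. The only point requiring a little care is making sure the trace-class argument (if one chooses to include it rather than cite it) really produces a contradiction — i.e. spelling out that a frame operator is bounded below by the lower frame bound, hence bounded below away from zero, hence not compact in infinite dimensions. Everything else is bookkeeping.
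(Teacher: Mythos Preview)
Your proposal is correct and matches the paper's proof essentially verbatim: the paper invokes \cite[Theorem 9]{AP17} for $\norm{T}\geqslant1$ from $\gamma(T)<+\infty$, and Proposition \ref{prop:parseval-ida} for $\norm{T}\leqslant1$ from $T\in\cF^p_\HH$. Your optional trace-class argument for the lower bound is a reasonable elaboration of what \cite{AP17} proves, but the paper simply cites it.
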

\begin{proof}
On one hand, if $\gamma(T)<+\infty$, \cite[Theorem 9]{AP17} says that $\norm{T}\geqslant1$. On the other hand, if $T\in\cF^p_\HH$, Proposition \ref{prop:parseval-ida} says that $\norm{T}\leqslant1$. Altogether, $\norm{T}=1$.
\end{proof}

At this point it is worth noting that there exist operators with norm $1$ but which do not admit {\em any} frame of iterations. Indeed, if $H^2:=H^2(\mathbb T)$ is the Hardy space without multiplicity, take $T=S^*:H^2\to H^2$ the adjoint of the shift in $H^2$. Then $\norm{T}=1$ but $T^*=S$ is not strongly stable and hence, by Theorem \ref{existencia}, $T$ does not admit a frame of iterations.

Moreover, even if $T$ admits a frame of iterations and $\norm{T}=1$ it might happen that $T$ has infinite index. This can be seen, for example, by taking $T$ as the shift $S$ in a Hardy space $H^2_\KK$, where $\KK$ is a Hilbert space with infinite dimension.

\subsection{Linear independence of generators}
Taking up from the result of Theorem \ref{nikc2} that says the Parseval index can always be attained on a linearly independent set of vectors, we now move to study whether this also happens for the general index. Additionally, we address the question about the necessity of linear independence for a generator set which has minimal size. 

To that end, we begin with a result that shows we can always shrink the size of a set of generators to a set of linearly independent vectors that are also generators.
\begin{proposition}\label{lig}
Let $T\in\cB(\HH)$ and $\{v_i\}_{i\in I}\subseteq\HH$ such that $\{T^nv_i\}_{i\in I,n\geqslant0}$ is a frame (Parseval frame) of $\HH$. Then there exists a linearly independent set of vectors $\{w_j\}_{j\in J}$ with $\#J\leqslant\#I$ and such that $\{T^nw_j\}_{j\in J,n\geqslant0}$ is a frame (Parseval frame). 

Moreover, the vectors $\{w_j\}_{j\in J}$ can be chosen so that $\osp\{w_j:j\in J\}=\osp\{v_i:i\in I\}$.
\end{proposition}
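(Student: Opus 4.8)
The plan is to extract a linearly independent generating subfamily by a maximality argument, and then show that removing the dependent vectors does not destroy the frame property because the removed vectors contribute nothing new to the closed span that supports the frame. First I would consider the collection of all subsets $J\subseteq I$ with the property that $\{v_i\}_{i\in J}$ is linearly independent and $\osp\{v_i:i\in J\}=\osp\{v_i:i\in I\}=:M$. Ordering such subsets by inclusion and invoking Zorn's lemma (or, if $I$ is countable, a straightforward greedy induction discarding each $v_i$ that lies in the closed span of the previously kept ones), I would obtain a maximal such $J$; relabel $\{v_i\}_{i\in J}$ as $\{w_j\}_{j\in J}$. By construction $\#J\leqslant\#I$, the $w_j$ are linearly independent, and $\osp\{w_j:j\in J\}=M$.

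Next I would argue that $\{T^nw_j\}_{j\in J,n\geqslant0}$ is still a frame (Parseval frame) of $\HH$. The key observation is that since $\{T^nv_i\}_{i\in I,n\geqslant0}$ is a frame of $\HH$, its elements span $\HH$, so $\HH=\osp\{T^nv_i:i\in I,n\geqslant0\}=\osp\{T^nm:m\in M,n\geqslant0\}$, where the last equality uses continuity of $T^n$ and the fact that each $v_i\in M$ while $M=\osp\{v_i\}$. Because $\osp\{w_j:j\in J\}=M$ as well, the same reasoning gives $\osp\{T^nw_j:j\in J,n\geqslant0\}=\HH$. The remaining point is the lower frame bound: one shows that for every $v_i$ and every $n$, the vector $T^nv_i$ can be approximated in norm by finite linear combinations of the $T^nw_j$ with $n$ ranging over $\geqslant0$ — indeed $v_i\in M=\osp\{w_j\}$, and applying the bounded operator $T^n$ transfers this approximation, so the analysis operator of $\{T^nw_j\}$ dominates, up to arbitrarily small error, that of $\{T^nv_i\}$ on each fixed vector, hence the lower bound $A$ for the original frame survives (in the Parseval case one would instead appeal directly to the functional-model machinery, see below).

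I expect the main obstacle to be the Parseval case: merely shrinking generators while preserving the closed span does not obviously preserve the Parseval identity, since removing frame vectors strictly decreases $\sum|\scal{x,x_i}|^2$ unless the removed vectors were already redundant in a strong (not just closed-span) sense. The clean way around this is to invoke the structural results already established: by Proposition \ref{basic} the Parseval frame $\{T^nv_i\}$ is unitarily equivalent to a basic frame $\{A_N^n(P_Ne_i)\}$ for a model space $N\subseteq H^2_{\ell^2(I)}$, and one analyses when $P_Ne_i$ can be deleted — precisely when $e_i$ lies in the closed span (within $\ell^2(I)$) of the remaining $e_k$ modulo $N^\perp$; since the $e_i$ are already orthonormal, linear dependence of $\{P_Ne_i\}$ is exactly the mechanism that allows deletion, and one checks that deleting such an $e_i$ leaves $\{A_N^n(P_Ne_k)\}_{k\neq i,n\geqslant0}$ still spanning $N$, hence still a Parseval frame by Proposition \ref{basicparseval} applied to the orthonormal system $\{P_N\tilde e_k\}$ obtained after reorthonormalizing. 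Transferring back through the unitary equivalence yields the linearly independent Parseval generators $\{w_j\}_{j\in J}$ with the required span property. I would present the general-frame argument in detail and handle the Parseval case by this reduction to basic frames, since that is where the subtlety genuinely lies.
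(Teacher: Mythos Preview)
Your general-frame argument has a real gap at the lower-bound step. You claim that because each $v_i$ lies in $\osp\{w_j:j\in J\}$, the analysis operator of $\{T^nw_j\}$ ``dominates, up to arbitrarily small error, that of $\{T^nv_i\}$''. But the inequality runs the other way: since $\{w_j\}_{j\in J}$ is a \emph{subfamily} of $\{v_i\}_{i\in I}$, you have $\sum_{j,n}|\scal{x,T^nw_j}|^2\leqslant\sum_{i,n}|\scal{x,T^nv_i}|^2$, so the lower bound $A$ of the larger system tells you nothing about the smaller one. Approximating $v_i$ by finite combinations of the $w_j$ does not repair this; it only confirms completeness, not a uniform lower bound. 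In fact the Zorn/greedy step can select a bad subfamily: take $\HH=H^2_{\ell^2}$, $T=S$, generators $\{e_i\}_{i\geqslant1}\cup\{u_i\}_{i\geqslant1}$ with $u_i=(e_i+e_{i+1})/i$. The full system is a frame (it contains the orthonormal basis $\{S^ne_i\}$ and the $u$-part is Bessel), and $\{u_i\}_{i\geqslant1}$ is a maximal linearly independent subset with closed span all of $\ell^2$; yet $\sum_{i,n}|\scal{e_k,S^nu_i}|^2=\sum_i|\scal{e_k,u_i}|^2\to0$ as $k\to\infty$, so $\{S^nu_i\}$ has no lower frame bound. Thus not every maximal linearly independent subfamily works, and your argument gives no mechanism to avoid the bad ones.

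The paper sidesteps this entirely by running the model-space reduction for \emph{both} the frame and the Parseval case, not just the latter. Via Proposition~\ref{basic} one has $v_i=V(P_Ne_i)$ for an isomorphism $V:N\to\HH$ (unitary in the Parseval case). Setting $\KK_0=\ell^2(I)\cap N^\perp$ and $\KK_1=\ell^2(I)\ominus\KK_0$, one chooses a \emph{new} orthonormal basis $\{f_i\}\cup\{g_j\}$ of $\ell^2(I)$ adapted to this splitting. Because $P_Nf_i=0$, the basic frame with this basis is $\{A_N^nP_Ng_j\}_{j\in J,n\geqslant0}$, which is Parseval in $N$ by Proposition~\ref{basicparseval} (crucially, $\{f_i\}\cup\{g_j\}$ is still a full orthonormal basis of $\KK$, so that proposition applies verbatim). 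Then $w_j:=V(P_Ng_j)$ gives the desired frame of $\HH$; linear independence comes from injectivity of $P_N|_{\KK_1}$, and the span identity from $\overline{P_N(\KK)}=\overline{P_N(\KK_1)}$. Note that these $w_j$ are \emph{not} in general a subfamily of the original $v_i$---that is precisely what makes the construction work where yours does not.
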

\begin{proof}
By item $(i)$ of Proposition \ref{basic} we know that there exists a model space $N\subseteq H^2_{\ell^2(I)}$ and an isomorphism $V:N\to\HH$ such that $v_i=V(P_Ne_i)$ for all $i\in I$, where $\{e_i\}_{i\in I}$ is the canonical basis of $\ell^2(I)$. 

Now, set $\KK:=\ell^2(I)$, $\KK_0:=\KK\cap N^\perp$ and $\KK_1:=\KK\ominus\KK_0$. Next, construct another orthonormal basis of $\KK$ as $\{f_i\}_{i\in I_0}\cup\{g_j\}_{j\in J}$ with $\{f_i\}_{i\in I_0}$ an orthonormal basis of $\KK_0$ and $\{g_j\}_{j\in J}$  an orthonormal basis of $\KK_1$. Then\begin{equation*}
\#J=\dim\KK_1\leqslant\dim\KK=\#I
\end{equation*}
and since $P_N(f_i) = 0$ for $i\in I_0$ we have that
 $\{(A_N)^nP_Ng_j\}_{j\in J}$ is a Parseval frame in $N$ with at most $\#I$ generators. Therefore, defining for each $j\in J$, $w_j:=V(P_Ng_j)$, we get that
\begin{equation*}
\{T^nw_j\}_{j\in J,n\geqslant0}=\{V((A_N)^nP_Ng_j)\}_{j\in J,n\geqslant0}
\end{equation*}
is a frame of $\HH$ with at most $\#I$ generators. Also, since $\{g_j\}_{j\in J}$ is a linearly independent subset of $\KK_1$ and $\ker( P_N|_\KK)=\KK_0$, then $P_N$ restricted to $\KK_1$ is one to one, which implies that $\{P_N g_j\}_{j\in J}$ is linearly independent. We conclude that $\{w_j\}_{j\in J}$ is a linearly independent set of vectors.

Moreover, 
\begin{equation*}
\osp\{v_i:i\in I\}=V(\overline{P_N(\KK)})=V(\overline{P_N(\KK_1)})=\osp\{w_j:j\in J\}.
\end{equation*}

Finally notice that if $\{T^nv_i\}_{i\in I,n\geqslant0}$ is Parseval, we can take $V$ to be unitary and thus $\{T^nw_j\}_{j\in J,n\geqslant0}$ would also be Parseval.
\end{proof}

As a corollary we obtain the following result about linear independence for generators with minimal cardinal.
\begin{corollary}
Given an operator $T\in\cB(\HH)$ with positive index, there always exists a linearly independent set of generators of cardinal $\gamma(T).$
Moreover, if the index is finite and positive, every set of generators with cardinal $\gamma(T)$ must be linearly independent.

The same holds for the Parseval index $\gamma_p(T).$
\end{corollary}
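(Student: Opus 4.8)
The plan is to deduce both statements directly from Proposition \ref{lig}. For the first assertion, suppose $\gamma(T)>0$. Then $T$ admits a frame of iterations generated by some set $\{v_i\}_{i\in I}$ with $\#I=\gamma(T)$ (or, if $\gamma(T)=+\infty$, with $\#I$ at most countably infinite realizing a frame of iterations). Applying Proposition \ref{lig} to this set yields a linearly independent set of generators $\{w_j\}_{j\in J}$ with $\#J\leqslant\#I=\gamma(T)$. By minimality of $\gamma(T)$ we must have $\#J\geqslant\gamma(T)$, hence $\#J=\gamma(T)$, so $\{w_j\}_{j\in J}$ is the desired linearly independent set of generators of cardinal $\gamma(T)$. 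The same argument with ``frame'' replaced by ``Parseval frame'' throughout — using that Proposition \ref{lig} preserves the Parseval property — gives the statement for $\gamma_p(T)$.

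For the second assertion, assume $0<\gamma(T)<+\infty$ and let $\{v_i\}_{i\in I}$ be an arbitrary set of generators with $\#I=\gamma(T)$. Apply Proposition \ref{lig} again: we obtain a linearly independent set $\{w_j\}_{j\in J}$ of generators with $\#J\leqslant\#I$ and, crucially, $\osp\{w_j:j\in J\}=\osp\{v_i:i\in I\}$. Minimality forces $\#J=\#I=\gamma(T)$, a finite number. Now $\{w_j\}_{j\in J}$ is a linearly independent family of $\gamma(T)$ vectors spanning the same (finite-dimensional, of dimension $\gamma(T)$) subspace $\osp\{v_i:i\in I\}$; since $\{v_i\}_{i\in I}$ is a spanning set of $\gamma(T)$ vectors for a $\gamma(T)$-dimensional space, it must itself be a basis of that span, in particular linearly independent. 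The identical reasoning applies to $\gamma_p(T)$.

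The only point requiring a little care — and the closest thing to an obstacle — is the finiteness argument in the second part: one needs that the span of a set of $\gamma(T)$ generators has dimension exactly $\gamma(T)$, which follows because Proposition \ref{lig} produces a linearly independent spanning set of that same cardinality, so the dimension is at least $\gamma(T)$, while it is trivially at most $\gamma(T)$. With that observation the elementary fact that a spanning set whose cardinality equals the dimension is automatically a basis closes the proof. No new machinery beyond Proposition \ref{lig} and basic linear algebra is needed.
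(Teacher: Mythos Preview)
Your proof is correct and follows essentially the same approach as the paper: both parts rest entirely on Proposition \ref{lig} together with the minimality of the index. The only difference is in the second assertion: the paper reopens the proof of Proposition \ref{lig} and argues that linear dependence of $\{v_i\}_{i\in I}$ forces $\KK_0\neq\{0\}$, hence $\dim\KK_1<\#I$, yielding a strictly smaller set of generators; you instead use the span-preserving clause of Proposition \ref{lig} as a black box and finish with the elementary fact that $\gamma(T)$ vectors spanning a $\gamma(T)$-dimensional space form a basis. Your route is slightly cleaner in that it treats Proposition \ref{lig} purely as a statement, while the paper's route makes explicit where the contradiction originates in the model-space picture.
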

\begin{proof}
We will only provide the proof for the index $\gamma(T)$, as the proof for $\gamma_p(T)$ is analogous. If $\{v_i\}_{i\in I}\subseteq\HH$ is a set of generators with $\#I=\gamma(T)$ then by Proposition \ref{lig} we can obtain a set of linearly independent generators $\{w_j\}_{j\in J}$ such that $\#J\leqslant\#I$. However, by definition, $\#J\geqslant\gamma(T)$. In sum, $\#J=\gamma(T)$.

Now if $\gamma(T)$ is finite, take an index set $I$ with $\#I=\gamma(T)$ and suppose that $\{v_i\}_{i\in I}\subseteq\HH$ is a set of generators that is linearly dependent. Then, if $\KK_0$ and $\KK_1$ are as in the proof of Proposition \ref{lig}, we would have $\KK_0\neq\{0\}$ as there would be a non-trivial linear combination of $\{P_Ne_i\}_{i\in I}$ equaling $0$. This means that $\dim\KK_1<\dim\KK=\#I$ and we would then obtain a set of generators with strictly less elements than $\#I$, which is a contradiction.
\end{proof}

In light of this result, we can then make the following definition of {\em optimal frame of iterations (i.e. optimal dynamical frames)}.

\begin{definition}
We say that a frame of iterations $\{T^nv_i\}_{i\in I,n\geqslant0}$ is {\em optimal} when the set of generators $\{v_i\}_{i\in I}$ is linearly independent and $\#I=\gamma(T)$.

We have a corresponding definition for the Parseval frame case.
\end{definition}

Proposition \ref{lig} signals a relation between a frame of iterations and the vector space spanned by its generators. Furthermore, in the Parseval frame case we can show that the dimension of such spaces is always the same. Moreover, it is equal to the Parseval index.

\begin{proposition}\label{span}
Let $T\in\cB(\HH)$ and $\{v_i\}_{i\in I}\subseteq\HH$ such that $\{T^nv_i\}_{i\in I,n\geqslant0}$ is a Parseval frame. Then
\begin{equation*}
\gamma_p(T)=\dim ({\rm span}\{v_i:i\in I\} ).
\end{equation*}
\end{proposition}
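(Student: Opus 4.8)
The plan is to reduce to the basic-frame / model-space picture and then use the computation already carried out in the proof of Theorem~\ref{nikc2}. By item $(ii)$ of Proposition~\ref{basic}, since $\{T^nv_i\}_{i\in I,n\geqslant0}$ is a Parseval frame, there exist a Hilbert space $\KK=\ell^2(I)$, a model space $N\subseteq H^2_\KK$, and a \emph{unitary} operator $V:N\to\HH$ with $T=VA_NV^{-1}=VA_NV^*$ and $v_i=V(P_Ne_i)$ for all $i\in I$, where $\{e_i\}_{i\in I}$ is the canonical basis of $\KK$. Because $V$ is unitary, $\dim\bigl(\osp\{v_i:i\in I\}\bigr)=\dim\bigl(\osp\{P_Ne_i:i\in I\}\bigr)=\dim\overline{P_N(\KK)}$, so it suffices to show $\dim\overline{P_N(\KK)}=\gamma_p(T)$.

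Next I would identify $\overline{P_N(\KK)}$ with the defect space. Writing $\KK_0:=\KK\cap N^\perp$ and $\KK_1:=\KK\ominus\KK_0$ (as in the proof of Proposition~\ref{lig}), we have $\ker(P_N|_\KK)=\KK_0$, so $P_N|_{\KK_1}$ is injective and $\overline{P_N(\KK)}=\overline{P_N(\KK_1)}$, whence $\dim\overline{P_N(\KK)}=\dim\KK_1$. Now invoke the computation from the proof of Theorem~\ref{nikc2}: for $f=\sum_{n\geqslant0}a_n\rchi^n\in N$ one has $(I_N-A_NS^*|_N)f=P_N(a_0)$, and since $a_0$ ranges over all of $\KK$ as $f$ ranges over $N$ — indeed every constant $a_0\in\KK$ that is the zeroth Fourier coefficient of some element of $N$; more carefully, $P_N(\KK)=(I_N-A_NS^*|_N)(N)$, because $N\ni f\mapsto a_0\in\KK$ is onto (the constant functions in the ambient $H^2_\KK$ restricted/projected), so applying $P_N$ gives exactly $P_N(\KK)$. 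Hence
\begin{equation*}
\overline{P_N(\KK)}=\overline{(I_N-A_NS^*|_N)(N)},
\end{equation*}
and transferring through the unitary $V$ as in Theorem~\ref{nikc2} (using $I_\HH-TT^*=V(I_N-A_NS^*|_N)V^*$ and $(A_N)^*=S^*|_N$) yields
\begin{equation*}
\dim\overline{P_N(\KK)}=\dim\overline{(I_\HH-TT^*)(\HH)}.
\end{equation*}
Finally, Theorem~\ref{nikc2} identifies the right-hand side with $\gamma_p(T)$, completing the proof.

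The one point that needs care — and which I expect to be the main obstacle — is the surjectivity claim $P_N(\KK)=(I_N-A_NS^*|_N)(N)$, i.e. that the zeroth-Fourier-coefficient map $N\to\KK$, $f\mapsto a_0$, has dense (equivalently, closed-dense) enough range that its image under $P_N$ equals $P_N(\KK)$. This is really the statement that every constant vector $k\in\KK$, viewed in $H^2_\KK$, satisfies $P_Nk=P_N(a_0)$ for $a_0$ the zeroth coefficient of $P_Nk\in N$; writing $k=P_Nk+(I-P_N)k$ and noting $(I-P_N)k$ need not be constant, one checks that the zeroth coefficient of $P_Nk$ still projects back to $P_Nk$ under $P_N$ only after closure, which is why the closed spans appear. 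An alternative, cleaner route avoiding this subtlety: argue directly that $\osp\{v_i:i\in I\}=V(\overline{P_N(\KK_1)})$ exactly as in the last display of the proof of Proposition~\ref{lig}, then observe $\dim\KK_1=\gamma_p(T)$ because $\{A_N^n(P_Ng_j)\}_{j\in J}$ with $\{g_j\}$ an orthonormal basis of $\KK_1$ is a Parseval frame of $N$ realizing the index $\dim\KK_1$, while Theorem~\ref{nikc2}'s lower bound $\#I\geqslant\dim\overline{(I_\HH-TT^*)(\HH)}=\gamma_p(T)$ forces $\dim\KK_1\geqslant\gamma_p(T)$, and minimality gives equality. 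This second approach sidesteps the surjectivity computation entirely and is the one I would write up.
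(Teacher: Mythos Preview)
Your first approach is essentially the paper's: reduce via Proposition~\ref{basic}(ii) to a model space $N\subseteq H^2_\KK$, identify $\osp\{v_i\}$ with $V(\overline{P_N(\KK)})$, and then prove $\overline{P_N(\KK)}=\overline{(I_N-A_NS^*|_N)(N)}$ so that Theorem~\ref{nikc2} finishes the job. You correctly flag the sticking point, but your justification is wrong: the zeroth-coefficient map $N\to\KK$, $f\mapsto f(0)$, is \emph{not} onto in general (take $\KK=\C^2$ and $N=\C e_1$). The paper isolates exactly this step as a lemma: setting $\KK_N:=\overline{\{g(0):g\in N\}}$, one has trivially $\overline{(I_N-A_NS^*|_N)(N)}=\overline{P_N(\KK_N)}\subseteq\overline{P_N(\KK)}$, and the reverse inclusion is obtained by taking $k\in\KK$, letting $k_1:=P_{\KK_N}k$, and observing that $k-k_1\perp N$ because for every $g\in N$ one has $\langle k-k_1,g\rangle_{H^2_\KK}=\langle k-k_1,g(0)\rangle_\KK=0$; hence $P_Nk=P_Nk_1\in P_N(\KK_N)$.

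Your ``cleaner'' alternative does not sidestep this. Both of the facts you invoke (the existence of a Parseval frame with $\dim\KK_1$ generators, and Theorem~\ref{nikc2}'s lower bound applied to that same frame) yield the \emph{same} inequality $\gamma_p(T)\leqslant\dim\KK_1$; neither gives $\dim\KK_1\leqslant\gamma_p(T)$, and ``minimality gives equality'' is not an argument. The missing inequality is, via $\gamma_p(T)=\dim\overline{(I_N-A_NS^*|_N)(N)}$, exactly the statement $\dim\KK_1\leqslant\dim\overline{(I_N-A_NS^*|_N)(N)}$, i.e.\ the nontrivial inclusion $\overline{P_N(\KK)}\subseteq\overline{(I_N-A_NS^*|_N)(N)}$ that the lemma supplies. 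So stick with the first route and insert the two-line lemma argument above.
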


To show this we will first prove it when $\HH=N$ a model space and $T=A_N$, which in the sense of Proposition \ref{basic}, represents all cases.

\begin{lemma}
Let $N\subseteq H^2_\KK$ be a model space, then
\begin{equation*}
\overline{(I_N-A_NS^*)(N)}=\overline{P_N(\KK)}.
\end{equation*}
\end{lemma}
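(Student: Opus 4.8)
The plan is to compute directly the action of $I_N - A_N S^*$ on a function $f \in N$, mirroring the calculation already carried out inside the proof of Theorem \ref{nikc2}. Writing $f = \sum_{n \geqslant 0} a_n \rchi^n$ with $\{a_n\}_{n\geqslant 0} \subseteq \KK$, since $N$ is $S^*$-invariant we have $(A_N)^* = S^*|_N$, so $A_N S^* f = P_N S S^* f$. Using that $S S^* f = \sum_{n\geqslant 1} a_n \rchi^n$ on all of $H^2_\KK$, the difference $(I_{H^2_\KK} - S S^*) f$ equals the constant function $a_0$. Applying $P_N$ and noting that $I_N - A_N S^* = P_N(I_{H^2_\KK} - S S^*)|_N$, we obtain $(I_N - A_N S^*) f = P_N(a_0)$. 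Since $a_0 = f(0) \in \KK$ and $a_0$ ranges over all of $\KK$ as $f$ ranges over $N$ (indeed, the constant functions intersect $N$ nontrivially is not needed — what is needed is only that $\{f(0) : f \in N\}$ is dense in $\KK$; see the remark below), this gives $(I_N - A_N S^*)(N) = P_N(\{f(0) : f \in N\})$, and taking closures yields $\overline{(I_N - A_N S^*)(N)} = \overline{P_N(\KK)}$.

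The one point requiring a small argument is that $\{f(0) : f \in N\}$ is dense in $\KK$, equivalently that $\overline{P_N(\{f(0): f\in N\})} = \overline{P_N(\KK)}$. For this I would argue that $\overline{\{f(0) : f \in N\}} = \overline{P_\KK(N)}$ (the image of $N$ under the projection onto the constants), and then that $\overline{P_N P_\KK(N)} = \overline{P_N(\KK)}$: the inclusion $\subseteq$ is clear, and for $\supseteq$ note that for $k \in \KK$ the orthogonal decomposition $k = P_N k + (k - P_N k)$ shows $P_N k = P_N(P_\KK(P_N k) + \cdots)$ — more cleanly, since $S^*$ annihilates constants, $\KK \subseteq \ker S^*$, so for $k \in \KK$ one has $\langle P_N k, g \rangle = \langle k, g \rangle = \langle k, g(0)\rangle_\KK$ for $g \in N$; hence $P_N(\KK) = P_N(\overline{\{g(0): g\in N\}}) $ on the level of closed spans by a duality/orthogonality argument. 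Alternatively, and perhaps most efficiently, one invokes the identity from Theorem \ref{niko}: $N = \mathrm{ran}(L)$ with $Lv = \sum_{n\geqslant 0} D(T^*)^n v\, \rchi^n$, so $\{f(0): f \in N\} = D(\HH)$ whose closure is exactly the defect space $\KK$ — but this is circular if $N$ is a general model space, so I would instead keep the self-contained orthogonality argument.

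The main obstacle is precisely this density statement: one must be careful that $P_N$ restricted to the constants $\KK$ need not be injective (e.g. $\KK$ could lie partly in $N^\perp$), so one cannot naively say $P_N(\KK) = P_N(\{f(0): f\in N\})$ as sets before taking closures. The clean resolution is to work with closed spans throughout and use that $(I_N - A_N S^*)$ is self-adjoint (as $I_{H^2_\KK} - SS^*$ is the projection onto the constants and $P_N(\cdot)|_N$ of a self-adjoint operator is self-adjoint), so $\overline{(I_N - A_N S^*)(N)} = \ker(I_N - A_N S^*)^\perp$; a vector $g \in N$ is in this kernel iff $P_N(g(0)) = 0$ iff $g(0) \perp P_N(\KK)$, and from there the stated equality of closed subspaces follows by taking orthogonal complements in $N$. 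This avoids all set-theoretic subtleties and makes the proof a short, clean computation.
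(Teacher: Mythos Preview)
Your initial computation $(I_N - A_N S^*)f = P_N(f(0))$ is correct and matches the paper exactly. After that point the two arguments diverge. The paper sets $\KK_N := \overline{\{g(0): g \in N\}}$, observes that $\overline{(I_N - A_N S^*)(N)} = \overline{P_N(\KK_N)}$, and then proves the nontrivial inclusion $P_N(\KK) \subseteq P_N(\KK_N)$ directly: for $k \in \KK$ the piece $k - P_{\KK_N} k$ is orthogonal to every $g(0)$ with $g \in N$, hence (since a constant pairs with any $g\in H^2_\KK$ via the zeroth coefficient) orthogonal to $N$, so $P_N k = P_N(P_{\KK_N} k)\in P_N(\KK_N)$.

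Your final route via self-adjointness of $I_N - A_N A_N^* = I_N - A_N S^*|_N$ and passage to kernels is a legitimate alternative and, once written out, arguably slicker. However, your sketch stops one step short. You correctly identify the kernel as $\{g \in N : P_N(g(0)) = 0\}$ (equivalently $\{g \in N : g(0) \perp P_N(\KK)\}$, as you write). But to finish you must match this with $(\overline{P_N(\KK)})^\perp \cap N = \{g \in N : g \perp P_N(\KK)\} = \{g \in N : g(0) = 0\}$, and that last equality is the set you actually need. The missing link is that for $g \in N$ the condition $P_N(g(0)) = 0$ already forces $g(0) = 0$: indeed $g(0) \in N^\perp$ together with $g \in N$ gives $0 = \langle g(0), g\rangle_{H^2_\KK} = \|g(0)\|^2_\KK$. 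This is precisely the same ``constants pair through the zeroth coefficient'' observation the paper uses, just deployed at a different place in the argument. With that one line added your kernel approach is complete; the earlier detours (density of $\{f(0):f\in N\}$ in $\KK$, which fails in general, and the appeal to the Rota model of Theorem~\ref{niko}, which as you note does not cover an arbitrary model space) can simply be dropped.
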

\begin{proof}
From the proof of Theorem \ref{nikc2} we know that if $f=\sum_{n\geqslant0}a_n\chi^n\in N$ then
\begin{equation*}
(I_N-A_NS^*)f=P_N(a_0)=P_N(f(0)),
\end{equation*}
identifying $f(0)=a_0$. Thus, if we define $\KK_N:=\overline{\{g(0):g\in N\}}$, 
\begin{equation*}
\overline{(I_N-A_NS^*)(N)}=\overline{P_N(\KK_N)}
\end{equation*}
since $P_N$ is continuous and $\KK_N$ is a closed subspace of $\KK$ (and hence of $H^2_\KK$) .

The inclusion $\overline{P_N(\KK_N)}\subseteq\overline{P_N(\KK)}$ is immediate from $\KK_N\subseteq\KK$. To see the converse inclusion we will show that $P_N(\KK)\subseteq P_N(\KK_N)$. To that end, let $k\in\KK$ and take $k_1:=P_{\KK_N}k$ where $P_{\KK_N}:\KK\to\KK$ is the projection onto $\KK_N$. This means that $k-k_1\perp\KK_N$, and in particular, for all $g\in N$ we have
\begin{equation*}
\scal{k_1-k,g(0)}_\KK=0.
\end{equation*}
Further, since $k_1-k$ is a constant we have that $\scal{k_1-k,g}_{H^2_\KK}=\scal{k_1-k,g(0)}_\KK$ for all $g\in N$. Whence $k-k_1\perp N$, and thus
\begin{equation*}
P_N(k-k_1)=0,
\end{equation*}
or equivalently, $P_N(k)=P_N(k_1)$. This shows that $P_N(k)\in P_N(\KK_N)$ and the proof is finished.
\end{proof}
\begin{proof}[Proof of Proposition \ref{span}]
This actually follows from the proof of Proposition \ref{lig}. Indeed, if $\{A_N^n(P_Ne_i)\}_{i\in I,n\geqslant0}$ is the basic frame unitarily equivalent to $\{T^nv_i\}_{i\in I,n\geqslant0}$ given by $(ii)$ of Proposition \ref{basic}, we have that
\begin{equation*}
V(\overline{P_N(\KK)})=\osp\{v_i:i\in I\}
\end{equation*}
where $V:N\to\HH$ is the unitary map that realizes the equivalence, i.e. $T=VA_NV^*$ and $v_i=V(P_Ne_i)$ for all $i\in I$.

Now, from Theorem \ref{nikc2}, we know that the relation $$T=VA_NV^*$$ implies
\begin{equation*}
\gamma_p(T)=\dim\overline{(I_N-A_NS^*)(N)}
\end{equation*}
And from the previous lemma we have that $\overline{(I_N-A_NS^*)(N)}=\overline{P_N(\KK)}$. Altogether, 
\begin{equation*}
\gamma_p(T)=\dim\osp\{v_i:i\in I\}
\end{equation*}
and the proof is finished.
\end{proof}

To conclude this section, we show that for any bounded operator 
$T\in\cF_{\HH}$ it is possible to explicitly construct a contraction 
$Q$ on $\HH$ that is similar to $T$ and satisfies $\gamma(T)=\gamma_p(Q)$. 
Although this construction does not directly assist in computing the index 
of $T$, it is nonetheless of independent interest.

Let $T$ be a bounded operator on $\HH$. Assume that $T$ is similar to a contraction and that $T^*$ is strongly stable. Then there exists a minimal, linearly independent collection of vectors 
$G=\{g_j\}_{j\in J}\subset\HH$ such that 
\begin{equation}\label{marco}
\{T^n g_j:j\in J,n\geqslant0\}
\end{equation}
is a frame of $\HH$. Hence, the index of $T$ is given by $\gamma(T)=\#J$. Let $S:\HH\to\HH$ be the frame operator associated with the system in \eqref{marco}, that is, 
\begin{equation*}
Sf=\sum_{j,n}\scal{f, T^n g_j}T^ng_j,\quad\text{for every }f\in\HH.
\end{equation*}
It is well known that if $\{h_j\}_j$ is a frame of $\HH$, then $\{S^{-1/2}h_j\}_j$ is a Parseval frame for $\HH$ (see for instance, 
\cite[Corollary 8.28]{Heil}). Applying this result to our frame, we obtain that 
\begin{equation*}
\{(S^{-1/2} T S^{1/2})^n (S^{-1/2} g_j)\}_{j,n}
\end{equation*}
is a Parseval frame of iterations in $\HH$. We set $Q:=S^{-1/2} T S^{1/2}$.
\begin{proposition}
With the above notation, we have that
\begin{equation*}
\gamma(T)=\gamma_p(Q)=\dim\overline{(I-QQ^*)(\HH)}.
\end{equation*}
\end{proposition}
\begin{proof}
We begin by noting that 
\begin{equation*}
I-QQ^*=S^{-1/2} (S-T S T^*) S^{-1/2}.
\end{equation*}
Hence, $\dim\overline{(I-QQ^*)(\HH)}=\dim\overline{(S-T S T^*)(\HH)}$, where we have used that $S^{-1/2}$ is an isomorphism.

For every $f\in\HH$, we compute:
\begin{align*}
TST^*f&=T(S(T^* f))=T\left(\sum_{j\in J,n\geqslant0}\scal{T^*f,T^n g_j}T^n g_j\right)
\\
&=\sum_{j\in J,n\geqslant1}\scal{f,T^ng_j}T^n g_j.
\end{align*}
Thus,
\begin{equation*}
(S-T S T^*)f=\sum_{j\in J}\scal{f,g_j}g_j=S_G(f),
\end{equation*}
where $S_G$ denotes the frame operator of the family $\{g_j\}_{j\in J}$.

Assume first that $\#J <\infty$. Since the vectors $\{g_j:j\in J\}$ are linearly independent, they form a basis of $K=\text{span}\{g_j:j\in J\}$, and hence a frame for $K$. Therefore, $S_G$ is an isomorphism from $K$ onto itself, and
\begin{equation*}
\dim (S_G(K))=\#J=\gamma(T).
\end{equation*}

Now, if $\#J=\gamma(T)$ is infinite, then $\gamma_p(Q)$ must also be infinite. Indeed, if a finite collection $\{S^{-1/2} T^n h_j\}_{j\in L}$ with $L$ finite were a Parseval frame of $\HH$, then $\{T^n h_j\}_{j\in L}$ would also form a frame of $\HH$, contradicting the assumption that the frame index of $T$ is infinite. Hence, $\gamma_p(Q)=+\infty=\gamma(T)$.
Therefore 
\[
\gamma_p(Q)=\dim\overline{(I-QQ^*)(\HH)} = \gamma(T)
\]

which completes the proof.
\end{proof}

Furthermore, since the operator $Q=S^{-1/2} T S^{1/2}$ generates a Parseval frame of iterations, it must necessarily be a contraction. We now provide a direct proof of this fact.

Notice that if $S$ is the frame operator of the system \eqref{marco} and $S_G$ that of $G$, then
\begin{equation*}
S-T S T^*=S_G.
\end{equation*}
Proving that $Q$ is a contraction is equivalent to showing that $Q^*$ is one; we verify the latter. For any $f\in\HH$,
\begin{align*}
\norm{f}^2-\norm{Q^* f}^2&=\scal{f,f}-\scal{Q^* f, Q^* f}
\\
&=\scal{f,f}-\scal{S^{1/2}T^*S^{-1/2}f,S^{1/2} T^* S^{-1/2} f}
\\
&=\scal{f,f}-\scal{S^{-1/2}TST^*S^{-1/2}f,f}
\\
&=\scal{S^{-1/2}(S-T S T^*)S^{-1/2}f,f}
\\
&=\scal{S_G S^{-1/2}f,S^{-1/2}f}\geqslant0,
\end{align*}
where the inequality follows from the positivity of $S_G$. Hence, $\norm{Q^* f}\leqslant\norm{f}$ for all $f\in\HH$, and therefore $Q$ is a contraction.

Another useful property of the operator $Q$ is that the kernels of the synthesis operators (see Definition \ref{synthesis} for a precise definition) associated with the frames $\{Q^n(S^{-1/2} g_j)\}_{j\in J,n\geqslant0}$ and  $\{T^n g_j\}_{j\in J,n\geqslant0}$ coincide. Consequently, both operators are represented on the same model space $N$, which captures the structure of their action. The distinction between them lies in their relation to the operator $A_N$, the compression of the shift to the model space $N$: the operator $T$ is {\em similar} to $A_N$, whereas $Q$ is {\em unitarily equivalent} to it.

\section{Strongly Stable Contractions with Strongly Stable Adjoints}\label{full range}

In this section, we focus on contractions acting on a general separable Hilbert
space, possessing the {\em joint strong stability} property, namely that both the operator and its adjoint are strongly stable. This property implies that $T$ and $T^*$ admit frames of
iterations.

We first work in $H^2_\KK$. For every model space $N\subseteq H^2_\KK$, the adjoint of the compressed shift $A_N$ coincides with the backward shift $S^*$, which is strongly stable. Then, the {\em joint} strong stability of $A_N$ reduces to requiring only the strong stability of $A_N$. 

We prove that the strong stability of the shift compression $A_N$ is equivalent to the orthogonal complement of $N$ in $H^2_{\KK}$ being a {\em full range subspace}, a concept studied by Helson in the 1960s in the context of shift-invariant spaces.

\subsection{Full range subspaces and inner functions }\label{fullranges}
The characterization of model subspaces in a Hardy space $H^2_\KK$ comes from the description of their orthogonal complement. Such complements are, by definition, invariant under the unilateral shift $S$. 

Before presenting this characterization, we need to introduce a few definitions.
\begin{definition}\label{inner}
Let $Q:\T\rightarrow\cB(\KK)$ be an operator-valued measurable function such that

\begin{equation*}
\norm{Q}_\infty:=\esssup_{z\in\T}\norm{Q(z)}_{op}<\infty.
\end{equation*}
We define the operator $\widehat{Q}:L^2(\T,\KK)\rightarrow L^2(\T,\KK)$ by
\begin{equation*}
(\widehat{Q}f)(z):=Q(z)f(z)\qquad\text{a.e. }z\in\T,f\in L^2(\T,\KK).
\end{equation*}
Note that $\widehat{Q}$ is well-defined and bounded with $\norm{\widehat{Q}}_{op}\leqslant\norm{Q}_\infty$.

We will say that $Q$ is {\em analytic} if $\widehat{Q}(H^2_\KK)\subseteq H^2_\KK$. Denote by $\cA=\cA(\KK)$ the set of all {\em analytic} operator-valued functions. 
Given $Q\in\cA(\KK)$, we say that $Q$ is  inner if $Q(z):\KK\to\KK$ is a unitary operator for almost every $z\in\T$.

We denote by $$\cA_\cI=\cA_\cI(\KK)$$  the subclass of inner functions in $\cA(\KK)$.
\end{definition}
Being analytic can also be shown through a Fourier-type expansion. To see this, fix $\{e_i\}_{i\in I}$ an orthonormal basis of $\KK$ and let $Q\in\cA(\KK)$. For each $n\geqslant0$, define $Q_n:\KK\to\KK$ to be the operator given by
\begin{equation*}
Q_nv=\sum_{i\in I}\scal{\widehat{Q}v,e_i\rchi^n}e_i,\qquad v\in\KK.
\end{equation*}
Note that $Q_n$ is indeed well-defined and bounded since
\begin{equation}\label{cuentad}
\sum_{i\in I}\abs{\scal{\widehat{Q}v,e_i\rchi^n}}^2\leqslant\sum_{m\geqslant0}\sum_{i\in I}\abs{\scal{\widehat{Q}v,e_i\rchi^m}}^2=\norm{\widehat{Q}v}^2\leqslant\norm{\widehat{Q}}_{op}^2\norm{v}^2_\KK.
\end{equation}
Moreover, as $Q_nv$ is the $n^{th}$-Fourier coefficient of $\widehat{Q}v$, we may write, for almost all $z\in\T$,
\begin{equation}\label{eq:fourier_Q}
Q(z)=\sum_{n\geqslant0}Q_nz^n
\end{equation}
in the strong operator topology, i.e. $Q(z)v=\sum_{n\geqslant0}Q_nvz^n$ for all $v\in\KK$. Indeed, convergence is assured by the right-hand side of \eqref{cuentad}. Further, by uniqueness of Fourier coefficients in $H^2_\KK$ (constants or otherwise), the operators $Q_n$ in this expansion are also unique. We will thus say that $Q_n$ is the $n^{th}$-Fourier (operator) coefficient of $Q$.

Analytic functions are  a key to describe shift-invariant subspaces as the following result shows.

\begin{theorem}[{Beurling-Lax-Halmos Theorem \cite[$\S$VI]{H}}]\label{thm:beu-lax-hal}
Let $\KK$ be a Hilbert space and $M\subseteq H^2_\KK$ a closed subspace. Then $M$ is $S$-invariant if and only if there exists a subspace $\KK_1\subseteq\KK$ and a function $Q\in\cA(\KK)$ such that
\begin{equation}\label{asociada}
M=\widehat{Q}(H^2_{\KK_1})
\end{equation}
and $Q(z):\KK\to\KK$ is a partial isometry with initial space $\KK_1$ for almost every $z\in\T$.
Moreover, if $\KK_2\subseteq\KK$ and $\widetilde{Q}\in\cA(\KK)$ are another pair satisfying \eqref{asociada} then there exists a unitary map $R:\KK_1\to\KK_2$ such that $Q(z)=\widetilde{Q}(z)R$ for almost all $z\in\T$.
\end{theorem}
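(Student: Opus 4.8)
The plan is to prove the Beurling--Lax--Halmos theorem by the classical wandering-subspace argument, adapted to the vector-valued setting. The nontrivial direction is that every $S$-invariant closed subspace $M$ has the form $\widehat{Q}(H^2_{\KK_1})$; the converse is essentially immediate since $\widehat{Q}$ intertwines $S$ with itself (multiplication by $z$ commutes with pointwise application of $Q(z)$) and, because $Q(z)$ is a partial isometry with fixed initial space $\KK_1$ for a.e.\ $z$, the map $\widehat{Q}$ restricted to $H^2_{\KK_1}$ is an isometry, hence has closed range, so $\widehat{Q}(H^2_{\KK_1})$ is a closed $S$-invariant subspace.

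For the main direction, first I would consider the restriction $S|_M$, which is an isometry on $M$. Set $\KK_1 := M \ominus S(M)$, the wandering subspace. The key structural fact is that $S|_M$ is a \emph{pure} isometry: $\bigcap_{n\geqslant 0} S^n(M) \subseteq \bigcap_{n\geqslant 0} S^n(H^2_\KK) = \{0\}$, since the latter intersection is trivial (a function in $H^2_\KK$ vanishing to infinite order at $0$ is $0$). Therefore the Wold decomposition gives the orthogonal decomposition $M = \bigoplus_{n\geqslant 0} S^n(\KK_1)$. Next I would build $Q$ from an isometric identification of $\KK_1$ with a subspace of $\KK$: pick a unitary $W:\KK_1 \to W(\KK_1) =: \KK_1'$ where $\KK_1'$ is $\KK_1$ viewed as a subspace of $\KK$ (concretely, $\KK_1 \subseteq H^2_\KK$ need not consist of constants, so one needs to transport it). The cleanest route is: define a unitary $\Phi: H^2_{\KK_1} \to M$ by $\Phi(\xi \rchi^n) := S^n \xi$ for $\xi \in \KK_1$, extended linearly; this is well-defined and unitary by the Wold decomposition. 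One checks $\Phi S = S \Phi$ (both sides send $\xi\rchi^n \mapsto S^{n+1}\xi$). Then the composition $\widehat{Q} := \iota \circ \Phi$, where $\iota: M \hookrightarrow H^2_\KK$ is the inclusion, is an isometry $H^2_{\KK_1} \to H^2_\KK$ commuting with the respective shifts. The standard lemma that a shift-commuting bounded operator between vector-valued Hardy spaces is given by multiplication by a bounded analytic operator function (apply the operator to constants $\xi \in \KK_1$ to read off $Q$, using the Fourier-coefficient expansion \eqref{eq:fourier_Q}) then yields $Q \in \cA$ with $M = \widehat{Q}(H^2_{\KK_1})$; and isometry of $\widehat{Q}$ on constants forces $Q(z)|_{\KK_1}$ to be an isometry for a.e.\ $z$, while killing $\KK_1^\perp$ is a normalization choice, so $Q(z)$ is a partial isometry with initial space $\KK_1$.

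For the uniqueness clause, suppose $M = \widehat{Q}(H^2_{\KK_1}) = \widehat{\widetilde Q}(H^2_{\KK_2})$ with both functions inner in the stated partial-isometry sense. Then $\widehat{\widetilde Q}^{-1}\widehat{Q}$ (inverting on the ranges) is a unitary from $H^2_{\KK_1}$ onto $H^2_{\KK_2}$ that commutes with the shift; by the same shift-commuting-multiplier lemma it is $\widehat{R}$ for an analytic $R$, and being unitary forces $R(z)$ to be a.e.\ a fixed unitary $R: \KK_1 \to \KK_2$ (the Fourier coefficients $R_n$ for $n \geqslant 1$ vanish because $R$ and $R^{-1}$ are both analytic). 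This gives $Q(z) = \widetilde Q(z) R$ a.e.

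The step I expect to be the main obstacle is the shift-commuting-multiplier lemma itself: that any bounded operator $B: H^2_{\KK_1} \to H^2_{\KK_2}$ with $BS = SB$ is of the form $\widehat{Q}$ for some $Q \in \cA$. This requires showing that $B$ applied to a constant vector $\xi$, which a priori lies in $H^2_{\KK_2}$, actually defines a function $z \mapsto Q(z)\xi$ with $Q(z)$ bounded uniformly in $z$ and measurable, and then verifying that $B$ agrees with $\widehat{Q}$ on all of $H^2_{\KK_1}$ (first on polynomials $\sum \xi_n \rchi^n$ using $BS=SB$, then by density and boundedness). The uniform boundedness $\|Q(z)\| \leqslant \|B\|$ a.e.\ is the delicate point and is typically obtained by a duality or a direct $L^\infty$-estimate via testing against reproducing kernels / Fejér means; since the excerpt already develops the Fourier-operator-coefficient calculus around \eqref{cuentad}--\eqref{eq:fourier_Q}, I would phrase this using that machinery and, if a fully self-contained argument is too long, cite \cite[$\S$VI]{H} for this classical multiplier representation and focus the written proof on the Wold-decomposition construction and the uniqueness normalization.
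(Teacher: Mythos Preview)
The paper does not prove this theorem; it is stated with a citation to Helson \cite[$\S$VI]{H} and used as a black box throughout. So there is no ``paper's own proof'' to compare against. Your proposal follows the classical wandering-subspace / Wold decomposition argument, which is indeed the standard route (and essentially Helson's). The outline is correct, including your identification of the shift-commuting multiplier lemma as the key technical step and your acknowledgment that a fully self-contained proof of that lemma may need to be outsourced to the reference.

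One point to tighten: you set $\KK_1 := M \ominus S(M)$, the wandering subspace, which is a subspace of $H^2_\KK$, not of $\KK$; the theorem as stated in the paper requires $\KK_1 \subseteq \KK$. The paper itself flags exactly this in the remark immediately following the theorem: in Helson's formulation $\KK_1$ is an abstract separable Hilbert space, and one then observes that it embeds isometrically into $\KK$. The reason $\dim(M\ominus SM)\leqslant\dim\KK$ is that for any orthonormal basis $\{E_i\}_{i\in I}$ of $W=M\ominus SM$, the vectors $\{E_i(z)\}_{i\in I}$ form an orthonormal system in $\KK$ for a.e.\ $z\in\T$ (the computation is the same as in the proof of $(iii)\Rightarrow(iv)$ of Theorem~\ref{lem:full-range}). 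Your parenthetical about ``transporting'' $\KK_1$ and the map $W$ gesture at this embedding but leave the dimension inequality unjustified; spelling that out would complete the argument.
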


We remark that in \cite{H}, $\KK_1$ is taken as an {\em arbitrary} separable Hilbert space. But it follows from the fact that there is an isometric embedding of $\KK_1$ into $\KK$ that one can take $\KK_1$ as a subspace of $\KK$, as we do here.

In what follows we will work with a subclass of shift-invariant subspaces called {\em full range} that we define now.

Let $M$ be an $S$-invariant subspace of $H^2_{\KK}$ and let $W=M\ominus S(M)$ denote its wandering subspace. Consider an orthonormal basis $\{E_j\}_{j\in I}$ of $W$. For almost every $z\in\T,$ define
\begin{equation}\label{range function}
J(z)=\osp\{E_j(z): j\in I\}.
\end{equation}
We note that this definition does not depend on the orthonormal basis $\{E_j\}_{j\in I}$ chosen. The function $J$ defined in the torus $\T$ and taking values in the closed subspaces of $\KK,$ is referred to as an {\em analytic measurable range function}. This concept was introduced by Henry Helson in \cite{H} who conducted a thorough analysis of range functions to characterize $S$-invariant subspaces.

\begin{definition}\label{def:full-range}
A subspace $M\subseteq H^2_\KK$ is called  full range if $M$ is invariant under $S$ and 
its range function as defined in \eqref{range function} is almost everywhere equal to $\KK$.
\end{definition}

Note that if $M=\widehat{Q}(H^2_{\KK_1})$ then
\begin{equation*}
J(z)=Q(z)(\KK_1)
\end{equation*}
holds for almost every $z\in\T$.

There is a way of describing full range subspaces through the function provided in \eqref{asociada} by the Beurling-Lax-Halmos Theorem. In fact, as we will see, in such case the function is {\em inner} in the sense of Definition \ref{inner}.

 We are now ready to present a characterization of full-range $S$-invariant spaces using inner functions. Furthermore, we provide two additional equivalent conditions for a subspace to be full range (see \cite{Fill,H}).

\begin{theorem}\label{lem:full-range}
Let $M\subseteq H^2_\KK$ be an $S$-invariant subspace and $W=M\ominus S(M)$. The following statements are equivalent:
\begin{enumerate}[label=(\roman*),ref=(\roman*)]
\item $M$ is full range.

\item $M=\widehat{Q}(H^2_\KK)$ for some $Q \in \cA_\cI(\KK)$.

\item $L^2(\T,\KK)=\displaystyle\bigoplus_{n\in\Z}U^nW$, where $U$ is the bilateral shift.

\item Every orthonormal basis $\{E_i\}_{i\in I}$ of \,$W$ satisfies that $\{E_i(z)\}_{i\in I}$ is an orthonormal basis of $\KK$ for almost all $z\in\T$.
\end{enumerate}
\end{theorem}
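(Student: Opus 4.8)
The plan is to prove Theorem~\ref{lem:full-range} by establishing the cycle of implications $(i)\Rightarrow(iv)\Rightarrow(iii)\Rightarrow(ii)\Rightarrow(i)$, exploiting at each stage the structural results already available: the Beurling--Lax--Halmos Theorem~\ref{thm:beu-lax-hal}, the range function machinery from\eqref{range function}, and elementary Fourier analysis in $L^2(\T,\KK)$.

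\medskip

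\emph{$(i)\Rightarrow(iv)$.} Fix an orthonormal basis $\{E_j\}_{j\in I}$ of $W$. Since $M$ is $S$-invariant with wandering subspace $W$, we have $M=\bigoplus_{n\geqslant0}S^nW$, and the functions $\{\rchi^nE_j\}_{j\in I,n\geqslant0}$ form an orthonormal basis of $M$. The key computation is to translate orthonormality of $\{E_j\}_{j\in I}$ in $H^2_\KK$ and of $\{\rchi^nE_j\}$ into pointwise statements: for $j,k\in I$ and $n\in\Z$,
\begin{equation*}
\int_\T\scal{E_j(z),E_k(z)}_\KK\,z^{-n}\,\d z=\scal{\rchi^nE_j,E_k}_{H^2_\KK}=\delta_{jk}\delta_{n0}\qquad(n\geqslant0),
\end{equation*}
and a conjugation argument handles $n<0$; hence the scalar function $z\mapsto\scal{E_j(z),E_k(z)}_\KK$ has all Fourier coefficients zero except possibly the $0$-th, so it equals $\delta_{jk}$ almost everywhere. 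Therefore $\{E_j(z)\}_{j\in I}$ is an orthonormal system in $\KK$ for a.e.\ $z$. Fullness, i.e.\ $J(z)=\osp\{E_j(z):j\in I\}=\KK$ a.e., then upgrades this orthonormal system to an orthonormal basis a.e. The main subtlety here is the measurability/almost-everywhere bookkeeping when $I$ is countably infinite: one intersects countably many full-measure sets indexed by pairs $(j,k)$.

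\medskip

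\emph{$(iv)\Rightarrow(iii)$.} Assuming $\{E_i(z)\}_{i\in I}$ is an orthonormal basis of $\KK$ for a.e.\ $z$, I claim $\{\rchi^nE_i\}_{i\in I,n\in\Z}$ is an orthonormal basis of $L^2(\T,\KK)$; since $U^nW=\osp\{\rchi^nE_i:i\in I\}$ this is exactly $L^2(\T,\KK)=\bigoplus_{n\in\Z}U^nW$. Orthonormality follows from the same Fourier-coefficient computation as above (now for all $n\in\Z$). For completeness, take $f\in L^2(\T,\KK)$ orthogonal to every $\rchi^nE_i$; expanding $f(z)$ in the basis $\{E_i(z)\}_i$ gives $f(z)=\sum_i c_i(z)E_i(z)$ with $c_i(z)=\scal{f(z),E_i(z)}_\KK$, and the orthogonality relations say every Fourier coefficient of each $c_i$ vanishes, so $c_i\equiv0$ a.e.\ and $f=0$. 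Conversely $(iii)\Rightarrow(iv)$ is not needed for the cycle but follows by reversing this argument.

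\medskip

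\emph{$(iii)\Rightarrow(ii)$ and $(ii)\Rightarrow(i)$.} For $(iii)\Rightarrow(ii)$: by Beurling--Lax--Halmos there is $\KK_1\subseteq\KK$ and $Q\in\cA(\KK)$ with $M=\widehat{Q}(H^2_{\KK_1})$ and $Q(z)$ a partial isometry with initial space $\KK_1$ a.e.; the wandering subspace is $W=\widehat{Q}(\KK_1)=\{Q(\cdot)v:v\in\KK_1\}$ and $\dim\KK_1=\dim W$. Condition $(iii)$ forces $\dim W=\dim\KK$ (since $\{U^nW\}_{n\in\Z}$ tiles $L^2(\T,\KK)\cong\bigoplus_{n\in\Z}\KK$), so after composing with a unitary $\KK_1\to\KK$ — which is permitted by the uniqueness clause of Theorem~\ref{thm:beu-lax-hal} and preserves analyticity and the partial-isometry property — we may take $\KK_1=\KK$; then $Q(z)$ is a partial isometry with initial space $\KK$, i.e.\ an isometry on $\KK$, and surjectivity of $Q(z)$ a.e.\ follows because $\{Q(z)v:v\in\KK\}$ must span $\KK$ a.e.\ (this is precisely the range function being $\KK$, which in turn follows from $(iii)$ via the tiling). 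Hence $Q\in\cA_\cI(\KK)$. For $(ii)\Rightarrow(i)$: if $M=\widehat{Q}(H^2_\KK)$ with $Q$ inner, then $W=\widehat{Q}(\KK)$ and for a.e.\ $z$, $J(z)=Q(z)(\KK)=\KK$ because $Q(z)$ is unitary; hence $M$ is full range.

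\medskip

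I expect the main obstacle to be the direction producing the inner function, namely the passage $(iii)\Rightarrow(ii)$: one must carefully combine the Beurling--Lax--Halmos representation with the dimension count coming from the orthogonal tiling of $L^2(\T,\KK)$, and argue that the initial space $\KK_1$ can be taken to be all of $\KK$ and that the a.e.\ partial isometry is then a.e.\ unitary. The measurable-selection and almost-everywhere issues (working with a possibly infinite orthonormal basis, ensuring the exceptional null sets can be amalgamated, and that ``range function equals $\KK$'' matches ``$Q(z)$ surjective a.e.'') are where the care is needed; everything else reduces to the standard Fourier-coefficient identity $\scal{\rchi^nE_j,E_k}_{H^2_\KK}=\int_\T\scal{E_j(z),E_k(z)}_\KK z^{-n}\,\d z$ applied repeatedly.
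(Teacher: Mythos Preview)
Your cycle $(i)\Rightarrow(iv)\Rightarrow(iii)\Rightarrow(ii)\Rightarrow(i)$ runs in the opposite direction from the paper's $(i)\Rightarrow(ii)\Rightarrow(iii)\Rightarrow(iv)\Rightarrow(i)$, and the reversal creates a genuine gap precisely at the step you flag as the main obstacle, $(iii)\Rightarrow(ii)$.

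The concrete problem is your sentence ``Condition $(iii)$ forces $\dim W=\dim\KK$ (since $\{U^nW\}_{n\in\Z}$ tiles $L^2(\T,\KK)\cong\bigoplus_{n\in\Z}\KK$)''. This cardinality count is invalid when $\dim\KK=\aleph_0$: if $\dim W=1$, both orthogonal decompositions $\bigoplus_{n}U^nW$ and $\bigoplus_n U^n\KK$ produce a space of Hilbert dimension $\aleph_0$, so no contradiction arises from dimension alone. Your subsequent appeal that ``the range function being $\KK$ \dots follows from $(iii)$ via the tiling'' is also unjustified as written; it is true, but it requires either Helson's description of doubly invariant subspaces via range functions, or the direct observation that every constant $k\in\KK$ lies in $\bigoplus_nU^nW$ and hence $k\in J(z)$ a.e. Once you have $J(z)=\KK$ a.e.\ you are using $(i)$ inside your proof of $(iii)\Rightarrow(ii)$; combined with the pointwise orthonormality of $\{E_j(z)\}$ (which, as you note, follows from the wandering property alone), this does give $\dim W=\dim\KK$ honestly --- but by then you have effectively proved $(iii)\Rightarrow(i)\Rightarrow(ii)$, which is the paper's route in disguise.

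The paper sidesteps all of this by proving $(i)\Rightarrow(ii)$ directly: Beurling--Lax--Halmos gives $M=\widehat{Q_1}(H^2_{\KK_1})$ with $Q_1(z)$ a partial isometry with initial space $\KK_1$; full range says $Q_1(z)(\KK_1)=\KK$ a.e., so $Q_1(z)|_{\KK_1}$ is a surjective isometry and $\KK_1\cong\KK$; composing with a fixed unitary $R:\KK\to\KK_1$ yields an inner $Q=Q_1R$. Then $(ii)\Rightarrow(iii)$ is a one-liner exploiting that $\widehat Q$ is unitary on $L^2(\T,\KK)$ and commutes with $U$: since $W=\widehat Q(\KK)$, one has $\bigoplus_nU^nW=\widehat Q\bigl(\bigoplus_nU^n\KK\bigr)=\widehat Q(L^2(\T,\KK))=L^2(\T,\KK)$. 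Your steps $(i)\Rightarrow(iv)$, $(iv)\Rightarrow(iii)$ and $(ii)\Rightarrow(i)$ are correct and match the paper's corresponding computations; the issue is solely the organization around the inner-function step.
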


\begin{proof}
$(i)\Rightarrow(ii)$. 
Let $\KK_1\subseteq\KK$ and $Q_1\in\cA(\KK)$ be a subspace and a function associated with $M$ via \eqref{asociada}. Since $M$ is full range we have that $Q_1(z)(\KK_1)=\KK$ for almost every $z\in\T$, which implies that $\KK_1$ and $\KK$ are isometrically isomorphic. Take $R:\KK\to\KK_1$ a fixed isometric isomorphism and set $Q:=Q_1R$, i.e. $Q(z)=Q_1(z)R$ for almost all $z\in\T$. Then, $Q(z):\KK\to\KK$ is an isometry and
\begin{equation*}
Q(z)(\KK)=Q_1(z)R(\KK)=Q_1(z)(\KK_1)=\KK
\end{equation*}
for almost all $z\in\T$. In other words, $Q(z)$ is a unitary operator in $\KK$ for almost all $z\in\T$.

Also, since $R:\KK\to\KK_1$ is an isomorphism and $Q$ is analytic, we have
\begin{equation*}
\widehat{Q}(H^2_\KK)=\widehat{Q_1}(H^2_{\KK_1})\subseteq H^2_\KK,
\end{equation*}
from where it follows that $Q$ is also analytic and $M=\widehat{Q}(H^2_\KK)$.

$(ii)\Rightarrow (iii)$. Since $\widehat{Q}: H^2_\KK\to H^2_\KK$ is a unitary operator that commutes with $S$ and we are assuming $M=\widehat{Q}(H^2_\KK)$, we have that 
\begin{equation*}
M\ominus S(M)=\widehat{Q}(H^2_\KK)\ominus\widehat{Q}(S(H^2_\KK))=\widehat{Q}(H^2_\KK\ominus S(H^2_\KK)).
\end{equation*}
Then, $W=\widehat{Q}(\KK)$. Further as $\widehat{Q}$ defines also a unitary operator on $L^2(\T,\KK)$ that commutes with the bilateral shift $U$ (see \cite[Corollary 3.19]{RR}), we have 
\begin{equation*}
\bigoplus_{n\in\Z}U^nW=\bigoplus_{n\in\Z}U^n\widehat{Q}(\KK)=\widehat{Q}\left(\bigoplus_{n\in\Z}U^n(\KK)\right)=L^2(\T,\KK).
\end{equation*}

$(iii)\Rightarrow(iv)$. Let $\{E_i\}_{i\in I}$ be an orthonormal basis of $W$. By $(iii)$, $\{U^nE_i\}_{ i\in I,n\in\Z}$ is an orthonormal basis of $L^2(\T,\KK)$. To see that $\{E_i(z)\}_{i\in I}$ is an orthonormal set of $\KK$ first note that
\begin{equation*}
\int_\T\scal{E_j(z), E_i(z)}z^{-n}\d z=\scal{E_j,U^n E_i}=\begin{cases}1&j=i\text{ and }n=0\\0&\text{all other cases}\end{cases}.
\end{equation*}
This means that, when $i\neq j$, all the Fourier coefficients of the function $z\mapsto\scal{ E_i(z),E_j(z)}$ are zero and when $i=j$, all the Fourier coefficients are zero except the coefficient $n=0$ which is equal to one. Thus $\scal{E_j(z),E_i(z)}=0$ for almost every $z\in\T$ for $i\neq j$ and $\norm{E_i(z)}=1$ for almost every $z\in\T$.

To prove completeness, suppose that $x\in\KK$ satisfies $\langle x,E_i(z)\rangle=0 $ for almost every $z\in\T$ and for all $i\in I$. Then
\begin{equation*}
0=\int_\T\langle x,E_i(z)\rangle z^{-n}\,d z =\langle x,U^nE_i\rangle
\end{equation*}
for all $n\in\Z$ and $i\in I$. Since $\{U^nE_i\}_{i\in I,n\in\Z}$ is complete in $L^2(\T,\KK)$, we get that the constant function $x$ must be $0$.

$(iv)\Rightarrow(i)$. It is immediate.

\end{proof}

In the next theorem we show that a sufficient condition to construct explicit frames of iteration of the backward shift within a model space $N$ is precisely that $H^{2}_{\KK}\ominus N$ is full range. 

\begin{theorem}\label{frameN}
Let $N\subseteq H^{2}_{\KK}$ be a model space and suppose that $M=H^{2}_{\KK}\ominus N$ is full range. Then, for every orthonormal basis $\{E_i\}_{i\in I}$ of $W$, the system $\{(S^*)^nS^*E_i\}_{i\in I, n\geqslant0}$ is a Parseval frame of $N$.
\end{theorem}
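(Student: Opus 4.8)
The plan is to reduce the statement to Parseval's identity for a suitable orthonormal basis of $L^2(\T,\KK)$; the full range hypothesis is precisely what makes the $U$-translates of a wandering basis of $W$ into a basis of the whole of $L^2(\T,\KK)$.

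First I would fix an orthonormal basis $\{E_i\}_{i\in I}$ of $W=M\ominus S(M)$ and collect two elementary facts. Since $M$ is full range, part $(iii)$ of Theorem \ref{lem:full-range} yields the orthogonal decomposition $L^2(\T,\KK)=\bigoplus_{n\in\Z}U^nW$, and because $U$ is unitary it follows that $\{U^nE_i\}_{i\in I,\,n\in\Z}$ is an orthonormal basis of $L^2(\T,\KK)$. On the other hand, $S^*E_i\in N$: clearly $S^*E_i\in H^2_\KK$, and for every $m\in M$ one has $\scal{S^*E_i,m}=\scal{E_i,Sm}=0$, since $Sm\in S(M)$ while $E_i\perp S(M)$; hence $S^*E_i\in H^2_\KK\ominus M=N$, and then $(S^*)^nS^*E_i\in N$ for every $n\geqslant0$ because $N$ is $S^*$-invariant. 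In particular the proposed system is a genuine family of vectors of $N$.

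The core of the argument is the following computation. Let $f\in N$. Using that $S^{n+1}=U^{n+1}|_{H^2_\KK}$, that the inner product of $H^2_\KK$ is the restriction of the one of $L^2(\T,\KK)$, and that $U$ is unitary, one gets, for all $i\in I$ and $n\geqslant0$,
\begin{align*}
\scal{f,(S^*)^nS^*E_i}&=\scal{f,(S^{n+1})^*E_i}=\scal{S^{n+1}f,E_i}\\
&=\scal{U^{n+1}f,E_i}=\scal{f,U^{-(n+1)}E_i}.
\end{align*}
Consequently, summing the squared moduli over $i\in I$ and $n\geqslant0$ is the same as summing $\abs{\scal{f,U^mE_i}}^2$ over $i\in I$ and $m\leqslant-1$. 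But for $m\geqslant0$ we have $U^mE_i=S^mE_i\in M$ (as $W\subseteq M$ and $M$ is $S$-invariant) while $f\perp M$, so $\scal{f,U^mE_i}=0$; therefore the sum over $m\leqslant-1$ equals the sum over all $m\in\Z$, which by Parseval's identity for the orthonormal basis $\{U^nE_i\}_{i\in I,\,n\in\Z}$ of $L^2(\T,\KK)$ equals $\norm{f}^2$. Thus $\sum_{i\in I,\,n\geqslant0}\abs{\scal{f,(S^*)^nS^*E_i}}^2=\norm{f}^2$ for every $f\in N$, which is exactly the Parseval frame identity.

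I do not expect a real obstacle here; the single place where the full range hypothesis is genuinely needed is the completeness $L^2(\T,\KK)=\bigoplus_{n\in\Z}U^nW$ of Theorem \ref{lem:full-range}$(iii)$, which is what allows Parseval's identity in $L^2(\T,\KK)$ to recover the full norm $\norm{f}^2$ from the coefficients against $\{U^mE_i\}_{i\in I,\,m\leqslant-1}$ alone (the remaining coefficients, against $U^mE_i$ with $m\geqslant0$, vanishing because $N\perp M$). Without full range one would only obtain the upper frame bound, since $\{U^nE_i\}_{i\in I,\,n\in\Z}$ would then span a proper (reducing) subspace of $L^2(\T,\KK)$ and $N$ need not be contained in it.
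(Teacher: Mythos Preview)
Your proof is correct and follows essentially the same strategy as the paper's: show $S^*E_i\in N$, identify $\scal{f,(S^*)^{n+1}E_i}$ with $\scal{f,U^{-(n+1)}E_i}$ for $f\in N$, observe that the inner products against $U^mE_i$ with $m\geqslant0$ vanish because $U^mE_i\in M\perp N$, and then invoke Parseval for the orthonormal basis $\{U^nE_i\}_{i\in I,n\in\Z}$ of $L^2(\T,\KK)$ supplied by the full range hypothesis. The only cosmetic difference is that you obtain the key identity directly via $\scal{f,(S^{n+1})^*E_i}=\scal{U^{n+1}f,E_i}$, whereas the paper first proves by induction that $(S^*)^n=P_{H^2_\KK}(U^*)^n|_{H^2_\KK}$ and phrases the conclusion as ``$\{(S^*)^nS^*E_i\}$ is the projection onto $N$ of an orthonormal basis of $L^2(\T,\KK)$''.
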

\begin{proof}
First, notice that $S^*E_i\in N$ for all $i\in I$. Indeed, since
\begin{equation*}
\scal{S^* E_i, S^nE_{i'}}=\scal{E_i, S^{n+1} E_{i'}}=0
\end{equation*}
for every $n\geqslant0$ and $i'\in I$, $S^* E_i$ is orthogonal to the orthonormal basis $\{S^n E_{i'}\}_{i'\in I}$ of $M$ and hence $S^*E_i\in N$. Second, as $N$ is an $S^*$-invariant subspace, then the system $\{(S^*)^nS^*E_i\}_{i\in I,n\geqslant0}$ is included in $N$. 

To show that it forms a frame, we use the fact that $(S^*)^n=P_{H^2_\KK}(U^*)^n|_{H^2_\KK}$ for all $n\geqslant 1$, where $U$ is the bilateral shift operator in $L^2(\T,\KK)$. Indeed, for $n=1$, the equality follows from the definitions of $U^*$ and $S^*$. Assume now that it is true for $n>1$. Then 
\begin{align*}
(S^*)^{n+1}&=S^*P_{H^2_\KK}(U^*)^n|_{H^2_\KK}=P_{H^2_\KK}U^*|_{H^2_\KK}P_{H^2_\KK}(U^*)^n|_{H^2_\KK}=(P_{H^2_\KK}UP_{H^2_\KK})^*(U^*)^n|_{H^2_\KK}\\
&=(UP_{H^2_\KK})^*(U^*)^n|_{H^2_\KK}
=P_{H^2_\KK}^*U^*|_{H^2_\KK}(U^*)^n|_{H^2_\KK}=P_{H^2_\KK}(U^*)^{n+1}|_{H^2_\KK}
\end{align*}
since $P_{H^2_\KK}$ is a projection and $H^2_\KK$ is $U$-invariant. 

Using this, we get that for all $i\in I$ and $n\geqslant1$
\begin{equation*}
(S^*)^nE_i=P_N(S^*)^nE_i=
P_NP_{H^2_\KK}(U^*)^n|_{H^2_\KK}E_i=P_N(U^*)^nE_i=P_NU^{-n}E_i.
\end{equation*}
On the other hand, when $n\geqslant0$ we have $U^nE_i=S^nE_i\in M\perp N$ for all $i\in I$ and thus $P_NU^nE_i=0$ for every $i\in I$ and $n\geqslant0$. Therefore, $\{(S^*)^nS^*E_i\}_{i\in I,n\geqslant0}$ is the projection onto $N$ of $\{U^nE_i\}_{i\in I,n\in\Z}$. Since $N\subseteq\bigoplus_{n\in\Z}U^nW$ and $\{U^nE_i\}_{i\in I,n\in\Z}$ is an orthonormal basis of $\bigoplus_{n\in\Z}U^nW$, we conclude that $\{(S^*)^nS^*E_i\}_{i\in I,n\geqslant0}$ is a Parseval frame for $N$.
\end{proof}

\subsection{Strong stability of the shift compression and full range subspaces}

In this section we prove that given a model space $N$ in $H^{2}_{\KK}$ and $A_{N}$ the compression of the shift operator $S$ to $N$, the condition of strong stability of $A_{N}$ is equivalent to require that the subspace $M=H^2_\KK\ominus N$ is full range.

One of the implications can be derived quickly from Theorem \ref{frameN} and Proposition \ref{prop:parseval-ida} as we see immediately.

\begin{proposition}\label{Nfullrange->ANstrongstable} Let $N$ be a model space in $H^{2}_\KK$. If $M=H^2_{\KK}\ominus N$ is full range, then $A_N$ is strongly stable.
\end{proposition}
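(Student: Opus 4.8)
The statement to be proved is: if $N\subseteq H^2_\KK$ is a model space and $M=H^2_\KK\ominus N$ is full range, then $A_N$ is strongly stable. The plan is to leverage Theorem~\ref{frameN} together with the already-proven Proposition~\ref{prop:parseval-ida}. Since $M$ is full range, Theorem~\ref{frameN} tells us that for any orthonormal basis $\{E_i\}_{i\in I}$ of the wandering subspace $W=M\ominus S(M)$, the system $\{(S^*)^n S^*E_i\}_{i\in I,n\geqslant 0}$ is a Parseval frame of $N$. Setting $v_i:=S^*E_i\in N$, this says precisely that $\{(A_N^*)^n v_i\}_{i\in I,n\geqslant 0}$ is a Parseval frame of $N$, because $A_N^* = S^*|_N$ and $N$ is $S^*$-invariant so iterating $S^*$ on vectors of $N$ is the same as iterating $A_N^*$. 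Hence $A_N^* \in \cF^p_N$.

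Now I would apply Proposition~\ref{prop:parseval-ida} to the operator $A_N^*$: since $A_N^*\in\cF^p_N$, that proposition says $A_N^*$ is a contraction and $(A_N^*)^* = A_N$ is strongly stable. That is exactly the conclusion. So the proof reduces to two short observations: (a) the frame $\{(S^*)^n S^* E_i\}$ furnished by Theorem~\ref{frameN} is literally a frame of iterations for the operator $A_N^*$ with generators $\{S^*E_i\}$, which uses only that $S^*|_N = A_N^*$ and $S^*(N)\subseteq N$; and (b) Proposition~\ref{prop:parseval-ida}, applied not to $A_N$ but to its adjoint.

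I do not expect any genuine obstacle here — this is a short assembly argument. The only point requiring a moment of care is bookkeeping the adjoints correctly: Theorem~\ref{frameN} produces a Parseval frame of iterations for $A_N^*$ (not for $A_N$), and then Proposition~\ref{prop:parseval-ida} applied to $A_N^*$ yields strong stability of its adjoint, which is $A_N$. One must also note that strong stability of $A_N$ is a statement in the Hilbert space $N$, and that $\{(A_N^*)^n v_i\}_{i,n}$ being a frame \emph{of $N$} is what Theorem~\ref{frameN} gives, so everything takes place in the right space with no need to transfer along any unitary. A clean write-up is just: identify $v_i=S^*E_i$, invoke Theorem~\ref{frameN} to get $A_N^*\in\cF^p_N$, invoke Proposition~\ref{prop:parseval-ida} on $A_N^*$, and read off that $A_N=(A_N^*)^*$ is strongly stable.
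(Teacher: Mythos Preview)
Your proposal is correct and follows essentially the same approach as the paper: invoke Theorem~\ref{frameN} to obtain that $\{(S^*)^n S^*E_i\}_{i\in I,n\geqslant0}$ is a Parseval frame of $N$, conclude $S^*|_N=A_N^*\in\cF^p_N$, and then apply Proposition~\ref{prop:parseval-ida} to $A_N^*$ to deduce that $A_N=(A_N^*)^*$ is strongly stable. The paper's proof is the same argument in two sentences.
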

\begin{proof}
Since $M=H^2_{\KK}\ominus N$ is full range, we know, from Theorem \ref{frameN}, that the set of vectors $\{(S^*)^nS^*E_i\}_{i\in I, n\geqslant0}$ is a Parseval frame for any orthonormal basis $\{E_i\}_{i\in I}$ of $W=M\ominus S(M)$. Thus, $S^*|_N$ admits a Parseval frame of iterations and therefore belongs to $\cF^p_N$. By Proposition \ref{prop:parseval-ida} this implies that $(S^*)^*=A_N$ is strongly stable.
\end{proof}

In order to prove the converse of Proposition \ref{Nfullrange->ANstrongstable}, we make use of the concept of {\em dilations}.
Let $T_1\in\mathcal B(\HH_1)$ and $T_2\in\mathcal B(\HH_2)$. Then, $T_2$ is a {\em dilation} of $T_1$ if $\HH_1\subseteq\HH_2$ and 
$T_{1}^n=P_{\HH_1}T_{2}|_{\HH_1}^n$ for all $n\in \mathbb N$. 
When $T_{2}$ is unitary, we call it {\em unitary dilation}. It is proven in \cite[I, $\S4$, Theorem 4.2]{NFBK10} that for every contraction $T_{1}$, there exists a unitary dilation $T_{2}$ on some Hilbert space $\HH_2$ that is minimal in the sense that $\HH_2$ can be taken to be $\bigoplus_{n\geq0} T_{2}^n\HH_1$.

Now, if $N\subseteq H^2_\KK$ is a model space and $P_N:L^2(\T,\KK)\to L^2(\T,\KK)$ is the orthogonal projection onto $N$, 
it is not difficult to see that $U:L^2(\T,\KK)\to L^2(\T,\KK)$ is a unitary dilation of $A_N$. Moreover, a minimal unitary dilation can be obtained by restricting $U$ to the reducing subspace
\begin{equation*}
\mathcal{G}:=\osp\{U^nN:n\in\Z\}.
\end{equation*}

We need the following result.

\begin{theorem}[{\cite[II, Theorem 1.2]{NFBK10}}]\label{nagy} Let $N$ be a model space in $H^{2}_\KK$ and
let $\mathcal{L}:=\overline{(U-A_N)(N)}$. Then $A_N$ is strongly stable if and only if $\mathcal{G}=\bigoplus_{n\in\Z}U^n\mathcal{L}$.
\end{theorem}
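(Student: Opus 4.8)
\textbf{Proof plan for Theorem \ref{nagy} (as applied in this paper).}
The plan is to exploit the explicit minimal unitary dilation of $A_N$ given just above the statement, namely $U$ restricted to $\GG=\osp\{U^nN:n\in\Z\}$, together with the Nagy--Foias dichotomy for minimal unitary dilations: the minimal unitary dilation of a contraction $C$ decomposes the dilation space as an orthogonal sum of two reducing pieces, one carrying the \emph{completely non-unitary} part and one carrying the \emph{unitary} part of $C$, and $C$ is strongly stable precisely when the dilation is of a special Wold-type form on the completely non-unitary part. Concretely, writing $\LL=\overline{(U-A_N)N}$, one checks that $\LL$ is a \emph{wandering} subspace for $U$ inside $\GG$ (i.e. $U^n\LL\perp U^m\LL$ for $n\neq m$), so $\bigoplus_{n\in\Z}U^n\LL$ is always a closed reducing subspace of $\GG$ on which $U$ acts as a bilateral shift; the content of the theorem is that this bilateral-shift part is \emph{all} of $\GG$ exactly when $A_N$ is strongly stable.

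First I would set up notation: let $C=A_N$, $\HH_1=N$, $\HH_2=\GG$, $U_2=U|_\GG$. I would verify that $\LL=\overline{(U-C)N}$ is wandering for $U$: for $f,g\in N$ and $n\geqslant1$, $\scal{U^n(U-C)f,(U-C)g}$ expands, using $C=P_NU|_N$, into inner products that telescope because $U$ is unitary and $N$ is $S^*$-invariant; the cross terms involving $U^{n+1}f$, $U^nCf$, etc. cancel since for $n\geqslant1$ the vectors $U^{n+1}f$ and $U^nf$ already lie in $H^2_\KK$ in a way compatible with the projection $P_N$, giving orthogonality. This shows $\RR:=\bigoplus_{n\in\Z}U^n\LL\subseteq\GG$ is a well-defined reducing subspace for $U_2$. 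Then I would prove the equivalence in two directions.

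For the direction ``$\RR=\GG$ $\Rightarrow$ $C$ strongly stable'': if $U_2$ is (unitarily equivalent to) a bilateral shift on $\bigoplus_{n\in\Z}U^n\LL$, then for $v\in N$ one writes the Fourier-type expansion of $v$ with respect to the orthonormal system built from an orthonormal basis of $\LL$ and the powers of $U$; applying $C^n=P_N U^n|_N$ and using that $P_N$ kills $U^m N$ for $m\geqslant 0$ (as in the proof of Theorem \ref{frameN}), one gets $\norm{C^n v}^2$ as a tail of the convergent sum $\sum\|(\text{coefficients})\|^2=\norm{v}^2$, hence $C^n v\to0$. For the converse ``$C$ strongly stable $\Rightarrow$ $\RR=\GG$'': this is where I expect the real work. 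The strategy is to show that the orthogonal complement $\GG\ominus\RR$ is a reducing subspace for $U_2$ on which $U_2$ is unitary and on which the compression to $N$ recovers a unitary part of $C$; strong stability of $C$ forces that unitary part to be $\{0\}$, hence $\GG\ominus\RR=\{0\}$. Equivalently, one shows directly that $\GG=\osp\{U^nN:n\in\Z\}$ is spanned by $\{U^n\LL:n\in\Z\}$ by expressing each $U^k v$ ($v\in N$, $k\in\Z$) as a norm-convergent combination of vectors $U^j(U-C)w$; the telescoping identity $(U-C)U^{\,m}\cdots$ combined with $C^m v\to0$ lets one write $v=\sum_{m\geqslant0}U^{\,?}(U-C)(C^m v)+\lim_m(\text{remainder})$, and the remainder vanishes precisely by strong stability. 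The main obstacle is making this last telescoping/limiting argument rigorous — controlling convergence of the series in $L^2(\T,\KK)$ and checking that no ``unitary residue'' survives — which is exactly the point where the hypothesis $C^n v\to 0$ (rather than mere $\norm{C}\leqslant1$) is essential. Since the statement is quoted verbatim from \cite[Theorem 1.2, Chapter 2]{NFBK10}, I would in fact cite that reference for the general contraction and merely note that the explicit model $(U|_\GG,\LL)$ above is the specialization to $C=A_N$, so that the abstract theorem applies directly.
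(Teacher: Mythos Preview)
The paper does not prove this theorem; it is stated with a citation to \cite[Theorem 1.2, Chapter 2]{NFBK10} and used as a black box in the proof of Theorem \ref{Woplus}. Your final recommendation --- to invoke the cited reference and note that $(U|_\GG,\LL)$ is the specialization of the abstract Nagy--Foias result to $C=A_N$ --- is exactly what the paper does.

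One slip in your preliminary sketch is worth flagging: in the backward direction you write that ``$P_N$ kills $U^mN$ for $m\geqslant0$'', which is false (indeed $P_NU|_N=A_N\neq0$). What is true, and what makes that direction work, is that $P_N$ kills $U^m\LL$ for $m\geqslant0$, because $\LL\subseteq M=H^2_\KK\ominus N$ (this is verified in the paper's proof of Theorem \ref{Woplus}) and $M$ is $S$-invariant; with this correction the tail estimate $\norm{A_N^m v}^2\leqslant\sum_{k<-m}\norm{a_k}^2\to0$ goes through as you intend. Since you ultimately defer to the citation, this does not affect your proposal.
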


We now have the following theorem.

\begin{theorem}\label{Woplus}
Let $N$ be a model space in $H^{2}_\KK$. If $A_N$ is strongly stable, then $M=H^{2}_{\KK}\ominus N$ is full range.
\end{theorem}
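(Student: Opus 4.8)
The plan is to exploit Theorem \ref{nagy}, which gives us that strong stability of $A_N$ is equivalent to the orthogonal decomposition $\mathcal{G}=\bigoplus_{n\in\Z}U^n\mathcal{L}$ where $\mathcal{L}=\overline{(U-A_N)N}$. First I would unwind what $\mathcal{L}$ is explicitly: for $f=\sum_{n\geqslant0}a_n\rchi^n\in N$ we have $(U-A_N)f = Uf - P_N Uf = (I-P_N)Uf = P_M(zf)$, so $\mathcal{L} = \overline{P_M(UN)}$. The key computation will be to show that $\mathcal{L}$ is, up to this projection structure, essentially the wandering subspace $W = M\ominus S(M)$ of $M$ — or at least that $\mathcal{L}$ and $W$ generate the same doubly-shift-invariant subspace of $L^2(\T,\KK)$. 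Indeed $M\subseteq H^2_\KK$ is $S$-invariant, and $\mathcal{G} = \osp\{U^n N : n\in\Z\}$; since $H^2_\KK = N\oplus M$, one expects $\mathcal{G} = \osp\{U^n N\} $ to be the orthogonal complement in $L^2(\T,\KK)$ of $\bigcap_{n\geqslant 0} U^n M$ or something comparable. The cleanest route: show $\mathcal{G}^\perp = \bigcap_{n\geqslant 0}U^n M = \bigcap_{n\geqslant 0}S^n M$.

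Next I would relate $\bigoplus_{n\in\Z}U^n\mathcal{L}$ to $\bigoplus_{n\in\Z}U^n W$. By Theorem \ref{lem:full-range}, $M$ is full range if and only if $L^2(\T,\KK) = \bigoplus_{n\in\Z}U^n W$. So the goal reduces to: assuming $\mathcal{G} = \bigoplus_{n\in\Z}U^n\mathcal{L}$, deduce $\bigoplus_{n\in\Z}U^n W = L^2(\T,\KK)$. The plan is to prove two things: (a) $\mathcal{G} \oplus \bigoplus_{n\in\Z}U^n W \supseteq $ (or equals) $L^2(\T,\KK)$, using that $M = W\oplus S(M) = W\oplus S W\oplus S^2 M = \cdots$, hence $M\ominus \bigcap_{n\geqslant 0}S^n M = \bigoplus_{n\geqslant 0}S^n W$; and (b) $\mathcal{L}\subseteq \bigoplus_{n\in\Z}U^n W$, so that $\mathcal{G} = \bigoplus_{n\in\Z}U^n\mathcal{L}\subseteq \bigoplus_{n\in\Z}U^n W$. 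Combining (a) and (b): we have $L^2(\T,\KK) = \mathcal{G}^\perp \oplus \mathcal{G}$, and $\mathcal{G}^\perp = \bigcap_{n\geqslant 0}S^n M$ (the unitary part of $M$ under $S$), while $M = \big(\bigoplus_{n\geqslant 0}S^n W\big)\oplus \bigcap_{n\geqslant 0}S^n M$ and $N$ and $M$ together span $H^2_\KK$; a bookkeeping argument with $\bigoplus_{n\in\Z}U^n W$ absorbing both $\mathcal{G}$ (via (b)) and eventually everything should close it.

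Alternatively — and this may be the slicker path — I would use part (ii) of Theorem \ref{lem:full-range}: $M$ full range $\iff M = \widehat{Q}(H^2_\KK)$ for an inner $Q$. By Beurling–Lax–Halmos (Theorem \ref{thm:beu-lax-hal}), $M = \widehat{Q}(H^2_{\KK_1})$ for some subspace $\KK_1\subseteq\KK$ and analytic $Q$ with $Q(z)$ a partial isometry with initial space $\KK_1$. Then $W = \widehat{Q}(\KK_1)$ and one computes $\mathcal{G}^\perp = \bigcap_n S^n M = \bigcap_n \widehat{Q}(S^n H^2_{\KK_1}) = \widehat{Q}\big(\bigcap_n S^n H^2_{\KK_1}\big) = \widehat{Q}(\{0\}) = \{0\}$? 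No — that would already force full range without any hypothesis, which is false, so $\widehat{Q}$ need not be injective on $L^2$. The correct statement is $\mathcal{G}^\perp$ equals the part of $L^2(\T,\KK)$ reduced by $U$ and orthogonal to $\bigoplus_{n\in\Z}U^n W$; showing this is $\{0\}$ is exactly full range. The cleanest formulation: with $\mathcal{L} = \overline{(U-A_N)N}$, prove directly that $\mathcal{L} = W$ when $A_N$ is strongly stable — or that $\bigoplus_{n\in\Z}U^n\mathcal{L} = \bigoplus_{n\in\Z}U^n W$ always — then Theorem \ref{nagy} says $\mathcal{G} = \bigoplus_{n\in\Z}U^n W$, and since $\mathcal{G}\subseteq L^2(\T,\KK)$ contains $N$ hence $U^nN$ for all $n$, hence all of $L^2(\T,\KK)$ (as $\osp\{U^n N\}$ with $N\oplus M = H^2_\KK$ and $M$ itself $S$-invariant generates everything), we get $L^2(\T,\KK) = \bigoplus_{n\in\Z}U^n W$, which is condition (iii) of Theorem \ref{lem:full-range}.

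The main obstacle I anticipate is the identification of $\mathcal{L} = \overline{(U-A_N)N}$ with the wandering subspace $W = M\ominus S(M)$ (or reconciling $\bigoplus U^n\mathcal{L}$ with $\bigoplus U^n W$): the operator $U - A_N$ mixes $N$ and $M$ in a way that needs care, and one must check both that $(U-A_N)N \subseteq \bigoplus_{n\in\Z}U^n W$ and conversely that $W$ lies in the $U$-orbit closure of $\mathcal{L}$. I would handle this by the explicit Fourier-coefficient computation $(U-A_N)f = P_M(Uf)$ for $f\in N$, then use $M = W\oplus SM$ repeatedly together with $N = H^2_\KK\ominus M$ to track which Fourier levels land where, being especially careful about the reducing part $\bigcap_{n\geqslant0}S^n M$, which is precisely the obstruction to full range and must be shown to vanish under the strong-stability hypothesis.
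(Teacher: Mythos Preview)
Your approach is essentially the paper's, but you are making it harder than necessary. The key observation you already have --- that $(U-A_N)f = P_M(Uf)$ for $f\in N$, hence $\mathcal{L}\subseteq M$ --- is all that is needed; you do not need $\mathcal{L}=W$ nor the equality $\bigoplus_n U^n\mathcal{L}=\bigoplus_n U^nW$. Since $S$ is a pure isometry on $H^2_\KK$, one always has $\bigcap_{n\geqslant0}S^nM=\{0\}$ and $M=\bigoplus_{n\geqslant0}S^nW$, so $\mathcal{L}\subseteq M$ already gives the one-sided inclusion $\bigoplus_{n\in\Z}U^n\mathcal{L}\subseteq\bigoplus_{n\in\Z}U^nW$. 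Theorem~\ref{nagy} then yields $N\subseteq\mathcal{G}=\bigoplus_{n\in\Z}U^n\mathcal{L}\subseteq\bigoplus_{n\in\Z}U^nW$; combined with the trivial inclusion $M\subseteq\bigoplus_{n\in\Z}U^nW$ this gives $H^2_\KK\subseteq\bigoplus_{n\in\Z}U^nW$, and applying $U^n$ yields all of $L^2(\T,\KK)$, which is condition (iii) of Theorem~\ref{lem:full-range}. That is exactly the paper's argument.

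Two of your side claims are wrong and would derail you if pursued. First, $\mathcal{G}^\perp=\bigcap_{n\geqslant0}S^nM$ is false: the right-hand side is always $\{0\}$ (again because $S$ is pure on $H^2_\KK$), whereas $\mathcal{G}=\osp\{U^nN:n\in\Z\}$ need not be all of $L^2(\T,\KK)$ in general (take $\KK=\C^2$, $M=H^2_{\C e_1}$, $N=H^2_{\C e_2}$). Second, and relatedly, the ``reducing part $\bigcap_{n\geqslant0}S^nM$'' is \emph{not} the obstruction to full range; full range concerns whether the range function $J(z)$ equals $\KK$ a.e., which is strictly stronger than purity of $S|_M$ and is not detected by that intersection. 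Your instinct that $\mathcal{L}$ is close to $W$ is correct --- the paper later proves $\mathcal{L}=W\ominus(W\cap\KK)$ in Proposition~\ref{dimcoro2} --- but that finer identification is not needed for this theorem.
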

\begin{proof}

By Theorem \ref{lem:full-range}, it suffices to prove that 
$\bigoplus_{n\in\Z}U^nW = L^2(\T,\KK),$ where $W=M\ominus S(M)$ is the wandering subspace of $M$. To this end, we show that $H^2_\KK\subseteq\bigoplus_{n\in\Z}U^nW$, since in that case
\begin{equation*}
L^2(\T,\KK)=\osp\{U^nH^2_\KK:n\in\Z\}\subseteq\bigoplus_{n\in\Z}U^nW\subseteq L^2(\T,\KK).
\end{equation*}

As $H^2_{\KK} = N\oplus M$ and $M = \bigoplus_{n\geqslant0}S^nW \subseteq \bigoplus_{n\in\Z}U^nW$,
it remains only to show that $N\subseteq\bigoplus_{n\in\Z}U^nW$.

Let $\mathcal{L}:=\overline{(U-A_N)N}$ be as in Theorem~\ref{nagy}.
Since $A_N$ is strongly stable, Theorem~\ref{nagy} yields
\begin{equation*}
N\subseteq\osp\{U^nN:n\in\Z\}=\bigoplus_{n\in\Z}U^n\mathcal{L}.
\end{equation*}
Hence it suffices to show that $\mathcal{L}\subseteq M$.
For every $f\in N$ we have
\begin{equation*}
(U-A_N)f = Uf - P_NUf = (I_{H^2_\KK}-P_N)Uf = P_MUf\in M,
\end{equation*}
so $(U-A_N)N\subseteq M$, and therefore $\mathcal{L}\subseteq M$.
Consequently, $\bigoplus_{n\in\Z}U^n\mathcal{L}\subseteq\bigoplus_{n\in\Z}U^nM=\bigoplus_{n\in\Z}U^nW$,
which completes the proof.
\end{proof}
We summarize our results in the following theorem.

\begin{theorem}\label{existence}
Let $N$ be a model space in $H^{2}_\KK$. The following statements are equivalent:
\begin{enumerate}[label=(\roman*),ref=(\roman*)]
\item The subspace $M=H^{2}_{\KK}\ominus N$ is full range.
\item The shift compression $A_{N}$ is strongly stable.
\item For every orthonormal basis $\{E_j\}_{j\in I}$ of the wandering subspace of $H^2_{\KK}\ominus N$, the system $\{(S^*)^n (S^*E_j)\}_{j\in I,n\geqslant0 }$ is a Parseval frame of $N$.
\end{enumerate}

\end{theorem}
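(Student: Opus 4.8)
The plan is to observe that Theorem \ref{existence} is essentially a repackaging of results already proven in this section, so the proof amounts to assembling the implications $(i)\Rightarrow(iii)\Rightarrow(ii)\Rightarrow(i)$ in a cycle. First I would note that $(i)\Rightarrow(iii)$ is exactly the content of Theorem \ref{frameN}: if $M=H^2_\KK\ominus N$ is full range, then for every orthonormal basis $\{E_j\}_{j\in I}$ of $W=M\ominus S(M)$ the system $\{(S^*)^n(S^*E_j)\}_{j\in I,n\geqslant0}$ is a Parseval frame of $N$. So this step is immediate by citation.

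Next, for $(iii)\Rightarrow(ii)$, I would argue as in the proof of Proposition \ref{Nfullrange->ANstrongstable}: if the system $\{(S^*)^n(S^*E_j)\}_{j\in I,n\geqslant0}$ is a Parseval frame of $N$, then writing $w_j:=S^*E_j\in N$ we see that $\{(S^*|_N)^n w_j\}_{j\in I,n\geqslant0}$ is a Parseval frame of $N$, hence $S^*|_N\in\cF^p_N$. By Proposition \ref{prop:parseval-ida} applied to the operator $S^*|_N$ on the Hilbert space $N$, its adjoint $(S^*|_N)^*=A_N$ is strongly stable. This gives $(ii)$.

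Finally, $(ii)\Rightarrow(i)$ is precisely Theorem \ref{Woplus}: if $A_N$ is strongly stable then $M=H^2_\KK\ominus N$ is full range. Closing the cycle $(i)\Rightarrow(iii)\Rightarrow(ii)\Rightarrow(i)$ establishes the equivalence of all three statements. Since each arrow is a direct appeal to an already-proven result (Theorems \ref{frameN}, \ref{Woplus} and Proposition \ref{prop:parseval-ida}), there is no genuine obstacle here; the only point requiring a moment of care is the bookkeeping in $(iii)\Rightarrow(ii)$, namely that one must apply Proposition \ref{prop:parseval-ida} to $S^*|_N$ rather than to $A_N$, and then recognize $A_N$ as its adjoint inside $N$ — this is the same maneuver already used in Proposition \ref{Nfullrange->ANstrongstable}, so it is entirely routine.
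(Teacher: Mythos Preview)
Your proposal is correct and matches the paper's approach exactly: the paper presents Theorem \ref{existence} as a summary of the preceding results without writing out a separate proof, and the cycle $(i)\Rightarrow(iii)\Rightarrow(ii)\Rightarrow(i)$ you give is precisely the chain of implications established by Theorem \ref{frameN}, the argument in Proposition \ref{Nfullrange->ANstrongstable} (which is your $(iii)\Rightarrow(ii)$ via Proposition \ref{prop:parseval-ida}), and Theorem \ref{Woplus}.
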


Finally, we can characterize the joint strong stability property for general contractions as follows:

\begin{theorem}
Let $T\in\mathcal B(\HH)$ be a contraction. Then, $T$ has the joint strong stability property if and only if there exists a model space $N\subseteq H^2_\KK$ satisfying that $H^2_{\KK}\ominus N$ is full range, such that $T$ is similar to $A_N$.

\end{theorem}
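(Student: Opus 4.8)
The statement is a clean combination of earlier results, so the plan is essentially to chain together the equivalences already established. First I would recall from Theorem~\ref{coro} (together with Theorem~\ref{existencia}) that $T$ having the joint strong stability property means that both $T$ and $T^*$ admit frames of iterations; since $T$ is assumed to be a contraction and $T^*$ is strongly stable, Theorem~\ref{niko} applies directly and yields a Hilbert space $\KK$ and a model space $N\subseteq H^2_\KK$ such that $T$ is \emph{unitarily equivalent} — in particular similar — to $A_N$. This handles the ``only if'' direction, modulo one small point: I must check that the extra hypothesis ``$T$ strongly stable'' is correctly transported to ``$A_N$ strongly stable'', which is immediate since strong stability is preserved under unitary equivalence (indeed under similarity, as already noted in the discussion preceding Proposition~\ref{general index}), and then Theorem~\ref{existence} converts ``$A_N$ strongly stable'' into ``$H^2_\KK\ominus N$ is full range''.

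For the ``if'' direction, suppose such an $N$ exists with $M=H^2_\KK\ominus N$ full range and $T$ similar to $A_N$. By Theorem~\ref{existence}, $M$ full range gives that $A_N$ is strongly stable. Its adjoint $A_N^*=S^*|_N$ is strongly stable as shown in the proof of Proposition~\ref{prop:parseval-ida} (the tail-of-a-convergent-sum argument), so $A_N$ has the joint strong stability property. Since similarity preserves strong stability for both $A_N$ and $A_N^*$ (using that $T^*$ is similar to $A_N^*$ whenever $T$ is similar to $A_N$), it follows that $T$ and $T^*$ are both strongly stable, i.e.\ $T$ has the joint strong stability property.

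\textbf{Where the only real care is needed.} There is essentially no hard step here; the one place to be slightly careful is the direction of the similarity/unitary-equivalence and which operator inherits which property. For the forward direction Theorem~\ref{niko} gives unitary equivalence, which is stronger than needed but costs nothing. For the backward direction only similarity is assumed, and one must verify that strong stability of $A_N$ \emph{and} of $A_N^*$ both survive: the first because $T=VA_NV^{-1}$ forces $T^n=VA_N^nV^{-1}$ and boundedness of $V,V^{-1}$ kills the orbit, the second because taking adjoints gives $T^*=(V^{-1})^*A_N^*V^*=(V^*)^{-1}A_N^*V^*$, again a similarity. No genuine obstacle is anticipated; the proposition is a packaging of Theorems~\ref{coro}, \ref{niko}, \ref{existencia} and \ref{existence}.
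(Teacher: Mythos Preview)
Your proposal is correct and follows essentially the same approach as the paper: in the forward direction you invoke Theorem~\ref{niko} to obtain the model space $N$ and then transport the strong stability of $T$ to $A_N$ to apply Theorem~\ref{existence}, and in the converse you use Theorem~\ref{existence} to get $A_N$ strongly stable, combine with the automatic strong stability of $A_N^*=S^*|_N$, and pull back via similarity. The paper's write-up is terser (it omits the verification that similarity preserves strong stability on both sides), and your references to Theorems~\ref{coro} and~\ref{existencia} in the forward direction are unnecessary detours since Theorem~\ref{niko} applies directly, but the logical skeleton is identical.
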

\begin{proof}
First, assume that $T$ has the joint strong stability property. Then, by Theorem \ref{niko}, there exists a model space $N\subseteq H^2_\KK$ such that $T$ is similar to $A_N$. Therefore, $A_N$ must have the joint strong stability property and thus, by Theorem \ref{existence}, $H^2_{\KK}\ominus N$ is full range.

Conversely, suppose that $T$ is similar to $A_N$ with $H^2_{\KK}\ominus N$ being full range. Then, again by Theorem \ref{existence}, $A_N$ has the joint strong stability property. So, the same holds for $T$. 

\end{proof}

\section{Optimal Parseval frames of iterations}\label{optimal}

In this section, we address the construction of optimal Parseval frames of iterations for contractions, namely, Parseval frames of the form $\{T^n v_i\}_{i\in I,n\geqslant0}$ where $I$ has cardinality equal to the Parseval index of $T$.

Moreover, for a contraction $T$ with joint strong stability, we show that the Parseval index of $T$ coincides with that of its adjoint $T^*$, and we provide a simultaneous construction of optimal Parseval frames of iterations for both $T$ and $T^*$.

We begin by working in the model space setting, where we construct Parseval frames of iterations for the shift compression operator and its adjoint using the minimal number of generators. This construction is then transferred to obtain optimal Parseval frames of iterations for $T$ and $T^*$.

\subsection{Optimal Parseval frames in model spaces}

To clarify our exposition, we now fix the setting in which all subsequent results will take place. Given $N\subseteq H^2_{\KK}$ a model space, we define:

\begin{itemize}
\item The $S$-invariant orthogonal complement of $N$, namely $M:= H^2_{\KK}\ominus N$, which we assume to be full range;

\item The wandering subspace of $M$, given by $W:= M\ominus SM$;

\item The subspace $W_1:=W\ominus (W\cap\KK)$;

\item The subspace $\KK_1:=\KK\ominus (W\cap\KK)$;
\item The subspace $\mathcal{L}:=\overline{(U-A_N)N}$.
\end{itemize}

These assumptions and the notation introduced above will serve as the standing hypotheses for the remainder of this section.

\begin{lemma}\label{reduction}
$\dim(W_1)=\dim(\KK_1)$.
\end{lemma}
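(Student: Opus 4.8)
The goal is to show that the two subspaces $W_1 = W \ominus (W\cap\KK)$ and $\KK_1 = \KK\ominus(W\cap\KK)$ have the same dimension. The natural strategy is to exhibit an explicit bounded invertible map (ideally an isometric isomorphism, or at least an injective map with dense range plus its adjoint, or a direct dimension count) between them. The first thing I would do is recall from Theorem \ref{lem:full-range} that since $M$ is full range, every orthonormal basis $\{E_i\}_{i\in I}$ of $W$ has the property that $\{E_i(z)\}_{i\in I}$ is an orthonormal basis of $\KK$ for almost every $z\in\T$; in particular $\dim W = \dim\KK$ (the cardinality of an orthonormal basis). So the content is really about the ``correction'' by the common piece $W\cap\KK$, which consists of the constant functions lying in $W$.

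\textbf{Key steps.} First I would analyze the subspace $W\cap\KK$ more carefully. A vector $k\in\KK$ (viewed as a constant function in $H^2_\KK$) lies in $W$ iff $k\in M$ and $k\perp SM$; since $k$ is constant, $k\perp SM$ is automatic (elements of $SM$ have vanishing $0$-th Fourier coefficient, so $\scal{k, Sf} = 0$ for all $f\in M$). Hence $W\cap\KK = \KK\cap M$, the space of constant functions in $M$. Next, I would consider the evaluation-at-the-origin map, or better, the map that sends $f\in W$ to its $0$-th Fourier coefficient $f(0)\in\KK$. The plan is to show this map, suitably restricted, gives the isomorphism. Concretely: decompose $W = (W\cap\KK)\oplus W_1$ and $\KK = (W\cap\KK)\oplus\KK_1$, and try to show that the map $\Phi: W_1 \to \KK_1$ sending $f\mapsto P_{\KK_1}(f(0))$ (or the relevant projection of the constant term) is a bounded bijection. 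For injectivity one would use that if $f\in W_1$ has $P_{\KK_1}(f(0)) = 0$ then $f(0)\in W\cap\KK$, and then $f - f(0)\in SM\cap W = \{0\}$ forcing $f = f(0)\in W\cap\KK$, so $f\perp W_1$ gives $f=0$. Surjectivity/density would come from the full-range property: since $\{E_i(0)\}$ spans a dense subset related to $\KK$ and the $W\cap\KK$ part is split off, the image should fill $\KK_1$. Alternatively, and perhaps more cleanly, one can argue via cardinalities of orthonormal bases: pick an orthonormal basis of $W\cap\KK$, extend it to an orthonormal basis of $W$; the new vectors form an orthonormal basis of $W_1$, and one shows their count equals $\dim\KK - \dim(W\cap\KK) = \dim\KK_1$ using $\dim W = \dim\KK$ from the full-range condition and the fact that $W\cap\KK$ is a common subspace of both — this requires care only when dimensions are infinite, where $\dim W = \dim\KK = \aleph_0$ makes $\dim W_1 = \dim\KK_1 = \aleph_0$ immediate.

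\textbf{Main obstacle.} The delicate point is the case analysis between finite and infinite dimensions, and making the "dimension subtraction" argument rigorous. If $\dim\KK$ is finite then $\dim W = \dim\KK$ and both $W_1$ and $\KK_1$ have dimension $\dim\KK - \dim(W\cap\KK)$, which settles it; if $\dim\KK = \aleph_0$ one must check that $\dim(W\cap\KK) < \aleph_0$ could still occur, in which case $\dim W_1 = \dim\KK_1 = \aleph_0$, or $\dim(W\cap\KK) = \aleph_0$, needing the observation that removing a closed subspace from a separable Hilbert space still leaves something of dimension $\aleph_0$ unless the subspace is the whole space — and $W\cap\KK \subsetneq \KK$ as long as $M\neq H^2_\KK$, i.e. $N\neq\{0\}$; if $N = \{0\}$ the statement is vacuous or trivial. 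So the real work is just bookkeeping the possibly-infinite cardinal arithmetic together with the clean algebraic identity $W\cap\KK = M\cap\KK$ and the full-range fact $\dim W = \dim\KK$. I expect the explicit-isomorphism route (via the constant-term map $\Phi$) to be the most robust, since it handles all cardinalities uniformly and avoids case splitting; the one thing to verify with care there is that $\Phi$ is well-defined into $\KK_1$ and that its range is all of $\KK_1$, which is where the full-range hypothesis enters decisively.
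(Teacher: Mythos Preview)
Both strategies in your proposal have genuine gaps.

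\textbf{The evaluation map fails to be injective.} Take $\KK=\C^2$ and the inner function $Q(z)=\begin{pmatrix} z & 0\\ 0 & 1\end{pmatrix}$, so $M=\widehat{Q}(H^2_{\C^2})$ is full range. Then $W=\widehat{Q}(\C^2)$ is spanned by $E_1(z)=(z,0)$ and $E_2(z)=(0,1)$; one checks $W\cap\KK=\mathrm{span}\{(0,1)\}$, hence $W_1=\mathrm{span}\{E_1\}$ and $\KK_1=\mathrm{span}\{(1,0)\}$. But $E_1(0)=(0,0)$, so your map $\Phi(f)=P_{\KK_1}(f(0))$ is the zero map on $W_1$. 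The flaw in your injectivity argument is the claim ``$f-f(0)\in SM$'': having zero constant term only gives $f-f(0)\in SH^2_\KK\cap M$, and in general $SH^2_\KK\cap M\supsetneq SM$.

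\textbf{The cardinal arithmetic is insufficient.} When $\dim\KK=\aleph_0$ and $\dim(W\cap\KK)=\aleph_0$, your assertion that the orthogonal complement has dimension $\aleph_0$ unless the subspace is everything is false: a closed subspace of $\ell^2$ can have any finite codimension. So from $\dim W=\dim\KK=\aleph_0$ alone you cannot conclude that $\dim W_1$ and $\dim\KK_1$ agree.

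The paper's argument is structurally different. It first shows $H^2_\KK=H^2_{\KK_0}\oplus H^2_{\KK_1}$ (with $\KK_0=W\cap\KK$) and $M=H^2_{\KK_0}\oplus M_1$ where $M_1=\bigoplus_{n\geqslant0}S^nW_1$; combining with $H^2_\KK=M\oplus N$ yields $H^2_{\KK_1}=M_1\oplus N$. Thus $N$ is a model space in the \emph{smaller} Hardy space $H^2_{\KK_1}$, and since strong stability of $A_N$ is intrinsic to $N$, Theorem~\ref{existence} applied in $H^2_{\KK_1}$ gives that $M_1$ is full range there. Then Theorem~\ref{lem:full-range}(iv) applied to $M_1\subseteq H^2_{\KK_1}$ yields $\dim W_1=\dim\KK_1$ directly. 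The key idea you are missing is this reduction to the smaller Hardy space together with the fact that the full-range property is recovered there via the intrinsic strong stability of $A_N$.
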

\begin{proof}
Let $\KK_0= W\cap\KK$. We first note that $H^2_{\KK}=H^2_{\KK_0}\oplus H^2_{\KK_1}.$

On the other hand, since $W$ is the wandering subspace of $M$ and $W=\KK_0\oplus W_1$ we have that 
\begin{equation}\label{wandering-decoposition}
M=\bigoplus_{n\geqslant 0} S^nW=\bigoplus_{n\geqslant 0} S^n\KK_0\oplus\bigoplus_{n\geqslant 0} S^n W_1= H^2_{\KK_0}\oplus M_1,
\end{equation}
with 
$M_1=\bigoplus_{n\geqslant 0} S^nW_1$.

As $H^2_{\KK}=M\oplus N$, using \eqref{wandering-decoposition}, we obtain the decomposition $H^2_{\KK}=H^2_{\KK_0}\oplus M_1\oplus N$. Combining this fact with the first observation, we conclude that 
\begin{equation*}
H^2_{\KK_1}=M_1\oplus N.
\end{equation*}

Now, by Theorem \ref{existence} we know that $A_N$ is strongly stable, which is a property that only depends on $N$. Therefore, using again Theorem \ref{existence}, we get that $M_1$ is a full range $S$-invariant subspace of $H^2_{\KK_1}$. Then, from item $(iv)$ of Theorem \ref{lem:full-range} applied to $M_1$, we obtain $\dim(W_1)=\dim(\KK_1)$ as we wanted. 
\end{proof}

\begin{remark}
We note that, by the proof of Lemma \ref{reduction}, one can always reduce the
Hardy space $H^2_{\mathcal K}$ to $H^2_{\mathcal K_1}$, while the
model space $N \subseteq H^2_{\mathcal K}$ remains a model space in
$H^2_{\mathcal K_1}$. In particular, every element of $N$ admits a power series
expansion whose coefficients belong to $\KK_1$. Moreover, $\KK_1$ is minimal
with this property, in the sense that if $N \subseteq H^2_E$ for some closed
subspace $E \subseteq \KK$, then necessarily $\KK_1 \subseteq E$.

We also observe, using Theorem \ref{existence}, that $M$ is full range if and only if $A_N$ is strongly stable if and only if $M_1$ is full range.

Finally, since $W=(W\cap \KK)\oplus W_1=\KK_0\oplus W_1$, Lemma
\ref{reduction} gives
\[
\dim(W)=\dim(\KK_0)+\dim(W_1)
       =\dim(\KK_0)+\dim(\KK_1)
       =\dim(\KK),
\]
which  can also be deduced from Theorem \ref{lem:full-range}.
\end{remark}

In the next proposition, we construct Parseval frames of iterations in $N$ for the operators $A_N$ and $S^*$.
We shall later prove that these frames are optimal.
\begin{proposition}\label{W1}
 Let $\{E_j\}_{j\in J}$ be an orthonormal basis of $W_1$ and $\{e_j\}_{j\in J}$ an orthonormal basis of $\KK_1$. 
Then, $\{A_N^n (P_Ne_j)\}_{j\in J,n\geqslant 0}$ and
$\{(S^*)^n S^*E_j\}_{j\in J,n\geqslant 0}$ are Parseval frames of $N.$
\end{proposition}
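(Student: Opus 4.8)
The statement asserts that two families of iterations are Parseval frames of $N$. For the family $\{(S^*)^n S^*E_j\}_{j\in J,n\geqslant0}$ the idea is to reduce to the already proved Theorem \ref{frameN}. The key observation is that, by the decomposition \eqref{wandering-decoposition} from the proof of Lemma \ref{reduction}, we have $H^2_{\KK_1}=M_1\oplus N$ where $M_1=\bigoplus_{n\geqslant0}S^nW_1$ is a full range $S$-invariant subspace of $H^2_{\KK_1}$ with wandering subspace $W_1$, and $N$ is still a model space when viewed inside $H^2_{\KK_1}$. Since $\{E_j\}_{j\in J}$ is an orthonormal basis of $W_1$, applying Theorem \ref{frameN} with the ambient Hardy space $H^2_{\KK_1}$, the model space $N$ and the full range complement $M_1$ gives directly that $\{(S^*)^n S^*E_j\}_{j\in J,n\geqslant0}$ is a Parseval frame of $N$. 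One should be slightly careful that the backward shift $S^*$ on $H^2_{\KK_1}$ restricts consistently to $N$; but this is immediate since $N\subseteq H^2_{\KK_1}\subseteq H^2_{\KK}$ and $S^*$ acts by the same formula $S^*f(z)=(f(z)-f(0))/z$ regardless of which ambient Hardy space we work in, and $N$ is $S^*$-invariant.

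For the family $\{A_N^n(P_Ne_j)\}_{j\in J,n\geqslant0}$, the plan is to invoke Proposition \ref{propV}, again with the reduced Hardy space $H^2_{\KK_1}$ in place of $H^2_\KK$. Proposition \ref{propV} says that if $\{g_i\}_{i\in I}$ is a frame of $\KK$ then $\{A_N^n(P_Ng_i)\}_{i\in I,n\geqslant0}$ is a frame of $N$; and by the remark following Proposition \ref{examples}, if the frame of $\KK$ is a Parseval frame then so is the resulting frame of iterations. Taking $\{e_j\}_{j\in J}$ an orthonormal basis of $\KK_1$ — which is in particular a Parseval frame of $\KK_1$ — and applying this in the ambient space $H^2_{\KK_1}$ (legitimate because $N\subseteq H^2_{\KK_1}$ by the Remark after Lemma \ref{reduction}, so $\KK_1$ plays the role of the coefficient space), we conclude that $\{A_N^n(P_Ne_j)\}_{j\in J,n\geqslant0}$ is a Parseval frame of $N$.

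\textbf{Main obstacle.} The only genuinely delicate point is the passage to the reduced Hardy space $H^2_{\KK_1}$: one must be sure that $N$ really is a model space inside $H^2_{\KK_1}$ (i.e. that the Fourier coefficients of elements of $N$ all lie in $\KK_1$, which is exactly the content of the Remark following Lemma \ref{reduction}) and that $M_1$ is full range in $H^2_{\KK_1}$ (established in the proof of Lemma \ref{reduction} via Theorem \ref{existence} and item $(iv)$ of Theorem \ref{lem:full-range}). Once this bookkeeping is in place, both assertions are immediate consequences of Theorem \ref{frameN} and Proposition \ref{propV} respectively. An alternative, perhaps cleaner, route for the first family is to bypass the reduction entirely: note that the full orthonormal basis $\{E_j\}_{j\in J}\cup\{\text{o.n.\ basis of }W\cap\KK\}$ of $W$ yields, by Theorem \ref{frameN}, a Parseval frame of $N$; but for $E$ in $W\cap\KK$ one has $S^*E=0$, so those generators contribute nothing, and one recovers the Parseval frame $\{(S^*)^nS^*E_j\}_{j\in J,n\geqslant0}$. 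I would present this shorter argument for the second family as it avoids invoking the reduction, and use the reduction argument for $\{A_N^n(P_Ne_j)\}$.
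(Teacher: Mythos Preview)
Your proposal is correct. For the $(S^*)^nS^*E_j$ family, your ``alternative, perhaps cleaner, route'' (extend $\{E_j\}_{j\in J}$ to an orthonormal basis of $W$ by adjoining a basis of $W\cap\KK$, apply Theorem \ref{frameN}, then drop the generators with $S^*E=0$) is exactly the paper's argument.

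For the $A_N^n(P_Ne_j)$ family, your approach differs from the paper's. You pass to the reduced Hardy space $H^2_{\KK_1}$ (using Lemma \ref{reduction} and the Remark following it) and then apply Proposition \ref{propV}; note that Proposition \ref{basicparseval} would give the Parseval conclusion more directly than routing through Proposition \ref{propV} and the remark after Proposition \ref{examples}. The paper instead uses the \emph{same} extend-and-note-zeros trick as for the other family: complete $\{e_j\}_{j\in J}$ to an orthonormal basis of $\KK$ by adjoining a basis $\{f_i\}_{i\in I}$ of $\KK_0=W\cap\KK$, observe that $f_i\in W\subseteq M$ forces $P_Nf_i=0$, and conclude that the basic Parseval frame $\{A_N^n(P_Ne_i)\}$ indexed by the full basis of $\KK$ already coincides with $\{A_N^n(P_Ne_j)\}_{j\in J,n\geqslant0}$. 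The paper's route is shorter and more symmetric (one idea handles both families, and it does not lean on the reduction to $H^2_{\KK_1}$), while your route makes heavier use of Lemma \ref{reduction}; but both are valid.
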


\begin{proof}
We first notice that by Lemma \ref{reduction}, it is valid to take the same index set $J$ for the orthonormal bases in $W_1$ and $\KK_1$. 

Now, let $\{F_i\}_{i\in I}$ be an orthonormal basis of $W\ominus W_1$, so $\{E_j\}_{j\in J}\cup\{F_i\}_{i\in I}$ is an orthonormal basis of $W.$
We know by Theorem \ref{frameN} that $\{(S^*)^n S^*E_j\}_{ j\in J,n\geqslant 0}\cup\{(S^*)^n S^*F_i\}_{ i\in I,n\geqslant 0}$ is a Parseval frame of $N.$

Since $F_i \in W \cap \,\mathcal{K}$ implies $S^*F_i = 0$ for all $i \in I$, it follows that $\{(S^*)^n S^*E_j\}_{j\in J, n\geqslant 0}$ is also a Parseval frame for $N$.
 
The proof for the frame of iterations generated by $A_N$ is analogous. We extend the orthonormal basis $\{e_j\}_{j\in J}$ of $\KK_1$ to an orthonormal basis of $\KK$, say $\{e_j\}_{j\in J}\cup\{f_i\}_{i\in I}. $
Then we observe that $P_Nf_i = 0$ for $i\in I $ since $f_i$ are in $W.$ 
Thus, $\{A^n_N(P_Ne_j)\}_{ j\in J,n\geqslant 0}$ is a Parseval frame of $N$. 
\end{proof}

In what follows, we will show that the Parseval frames of iterations by $A_N$ and $S^*$ constructed on Proposition \ref{W1} are optimal. For this it will be enough to show that $\gamma_p(A_N) =\gamma_p(S^*|_N) =\dim(W_1)=\dim(\KK_1)$.  We need first the following proposition.

\begin{proposition}\label{dimcoro}
For the Parseval index of $S^*|_N$ and $A_N$ we have that
\begin{equation*}
\gamma_p(S^*|_N)=\gamma_p(A_N)=\dim\mathcal{L}.
\end{equation*}
\end{proposition}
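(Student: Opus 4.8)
The plan is to use Theorem \ref{nikc2}, which identifies the Parseval index of a contraction $T\in\cF^p_\HH$ with $\dim\overline{(I_\HH-TT^*)(\HH)}$, applied to the two contractions $A_N$ and $S^*|_N$ acting on $N$. Both are contractions with strongly stable adjoints: for $A_N$ this is Theorem \ref{existence} (since $M$ is full range), and $(A_N)^*=S^*|_N$ is always strongly stable, while $(S^*|_N)^*=A_N$ is strongly stable by the same theorem; hence both operators belong to $\cF^p_N$. So Theorem \ref{nikc2} gives
\begin{equation*}
\gamma_p(A_N)=\dim\overline{(I_N-A_N(A_N)^*)(N)}=\dim\overline{(I_N-A_NS^*|_N)(N)}
\end{equation*}
and
\begin{equation*}
\gamma_p(S^*|_N)=\dim\overline{(I_N-S^*|_N(S^*|_N)^*)(N)}=\dim\overline{(I_N-S^*A_N)(N)}.
\end{equation*}

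Next I would identify both of these ranges with $\LL=\overline{(U-A_N)N}$. For the first one, the Lemma preceding the proof of Proposition \ref{span} already shows $\overline{(I_N-A_NS^*)(N)}=\overline{P_N(\KK_N)}$ where $\KK_N=\overline{\{g(0):g\in N\}}$, and the computation there gives $(I_N-A_NS^*)f=P_N(f(0))$. To connect with $\LL$, observe that for $f\in N$ one has $(U-A_N)f=Uf-P_NUf=P_{N^\perp}Uf=P_M(zf)$ (this identity is already used in the proof of Theorem \ref{Woplus}), so $\LL=\overline{P_M(UN)}$. One then checks that $P_N U^{-1}$ restricted to $\LL$ is (up to the obvious identifications) an isometric isomorphism onto $\overline{(I_N-A_NS^*)(N)}$: indeed for $f\in N$, $P_N U^{-1}(U-A_N)f=P_N f-P_N U^{-1}A_N f=f-S^*A_N f=(I_N-S^*A_N)f$, giving directly $\gamma_p(S^*|_N)=\dim\overline{(I_N-S^*A_N)(N)}=\dim\overline{P_N U^{-1}(\LL)}$, and since $U$ is unitary and $P_N U^{-1}|_\LL$ is injective on the appropriate space, $\dim\overline{P_N U^{-1}(\LL)}=\dim\LL$. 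Symmetrically, $(I_N-A_NS^*)f=P_N\big((U-S^{*\,\mathrm{cheat}})\dots\big)$—more cleanly, note $(I_N-A_N S^*)f = P_N(f(0))$ while $(U-A_N)(S^*f) = P_M(U S^* f)=P_M(f-f(0))=-P_M(f(0))$ for $f\in N$, so $P_M(f(0))=-(U-A_N)S^*f\in\LL$ and the constant $f(0)$ decomposes as $P_N(f(0))+P_M(f(0))$; as $f$ ranges over $N$, $f(0)$ ranges over a dense subset of $\KK_N$, and the map $k\mapsto(P_N k,-P_M k)$ restricted to $\KK_N$ identifies $\overline{P_N(\KK_N)}$ and $\overline{P_M(\KK_N)}$ with subspaces of the same dimension—the second being $\LL$ by the decomposition above.

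A cleaner route for the $A_N$ side is to invoke the minimal unitary dilation directly: $\mathcal G=\osp\{U^n N:n\in\Z\}$ and, by Theorem \ref{nagy}, strong stability of $A_N$ gives $\mathcal G=\bigoplus_{n\in\Z}U^n\LL$, so $\LL$ is a wandering subspace generating $\mathcal G$. Comparing this with the wandering-subspace description of $N$ inside $\mathcal G$ relative to $A_N$ as its compressed shift, the dimension of the ``defect'' $\overline{(I_N-A_N(A_N)^*)(N)}$ equals the dimension of $\LL$; this is essentially the content of the Sz.-Nagy--Foias defect/dilation correspondence. I would phrase it via the explicit identity $(I_N-A_NS^*)f=P_N(f(0))$ together with $P_M(f(0))\in\LL$ and the fact that $k\mapsto P_N k$ is injective on $\KK_N$ (shown in the lemma before Proposition \ref{span}), so that $\dim\overline{(I_N-A_NS^*)(N)}=\dim\KK_N$, and similarly $\dim\LL=\dim\overline{P_M(\KK_N)}=\dim\KK_N$ using that $P_M$ is also injective on $\KK_N$ (because $\KK_N\cap N=\KK_N\cap(N\cap N^\perp$-complement$)$ forces $k\in N$ and $k$ constant with $k(0)=k$, but then $k=P_N k$ already handled). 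Thus all three quantities equal $\dim\KK_N$, hence equal each other.

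The main obstacle I anticipate is the bookkeeping in identifying $\overline{(I_N-A_NS^*)(N)}$, $\overline{(I_N-S^*A_N)(N)}$ and $\LL$ as spaces of the same dimension without circularity: the cleanest argument routes everything through the single scalar $\dim\KK_N$ (with $\KK_N=\overline{\{f(0):f\in N\}}$), using the already-established injectivity of $P_N$ on constants lying in $\KK_N$ and the identity $P_M(f(0))=-(U-A_N)S^*f$ to get injectivity of $P_M$ on $\KK_N$ as well. Once one commits to that, each of the three equalities $\gamma_p(S^*|_N)=\dim\LL$, $\gamma_p(A_N)=\dim\LL$ is a short computation from Theorem \ref{nikc2} plus the Lemma before Proposition \ref{span}, and no deep new input beyond Theorems \ref{nikc2}, \ref{existence} and \ref{nagy} is needed.
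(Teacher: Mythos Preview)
Your opening step---applying Theorem \ref{nikc2} to $A_N$ and to $S^*|_N$ to express both Parseval indices as defect-space dimensions $\dim\overline{(I-A_NS^*)(N)}$ and $\dim\overline{(I-S^*A_N)(N)}$---matches the paper exactly. After that, however, the paper's proof is a single citation: it invokes \cite[Theorem 1.2]{NFBK10}, which identifies both defect indices with $\dim\LL$, and is done.

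Your hands-on alternative has real gaps. The identity $P_N U^{-1}(U-A_N)f=(I_N-S^*A_N)f$ is correct, but the claim that $P_N U^{-1}|_{\LL}$ is injective is asserted, not proved. On the $A_N$ side, the computation $(U-A_N)(S^*f)=-P_M(f(0))$ only yields the inclusion $\overline{P_M(\KK_N)}\subseteq\LL$, not equality (not every $g\in N$ is of the form $S^*f$), and your parenthetical argument for injectivity of $P_M$ on $\KK_N$ is not a proof as written. You also attribute injectivity of $P_N|_{\KK_N}$ to the Lemma preceding Proposition \ref{span}, but that lemma only establishes $\overline{(I_N-A_NS^*)(N)}=\overline{P_N(\KK)}$, not injectivity; and in the standing setting of Section \ref{optimal} the subspace $W\cap\KK$ may be nonzero, so $P_N$ is certainly not injective on all constants. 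A much cleaner route for one of the two equalities is the norm identity: since $(U-A_N)f=P_{N^\perp}Uf$ with $Uf\in H^2_\KK$, one has $\|(U-A_N)f\|^2=\|f\|^2-\|A_Nf\|^2=\|(I-S^*A_N)^{1/2}f\|^2$, so $f\mapsto(U-A_N)f$ is a partial isometry with initial space $\overline{(I-S^*A_N)(N)}$ and range $\LL$, giving $\gamma_p(S^*|_N)=\dim\LL$ at once. The remaining equality $\gamma_p(A_N)=\dim\LL$ amounts to the equality of the two defect indices of $A_N$; this is exactly where the Nagy--Foias input enters (under joint strong stability both $\LL$ and $\overline{(U^*-S^*|_N)N}$ are wandering for $U$ and generate $\GG$, forcing equal dimension), and your route through $\dim\KK_N$ does not bypass it.
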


\begin{proof}
From Theorem \ref{nikc2} applied to the operator $A_N$ and the operator $S^*$ acting in $N$ we have that
\begin{gather*}
\gamma_p(A_N)=\dim\overline{(I-A_NS^*)(N)},
\\
\gamma_p(S^*|_N)=\dim\overline{(I-S^*A_N)(N)}.
\end{gather*}
On the other hand, \cite[II, Theorem 1.1]{NFBK10} says that
\begin{gather*}
\dim\overline{(U-A_N)(N)}=\dim\overline{(I-A_NS^*)(N)},
\\
\dim\overline{(U^*-S^*)(N)}=\dim\overline{(I-S^*A_N)(N)}.
\end{gather*}
Finally, as $(A_N)^*=S^*$ and both $A_N$ and $S^*$ are strongly stable, \cite[II, Theorem 1.2]{NFBK10} states that $\dim\overline{(U-A_N)N}=\dim\overline{(U^*-S^*)N}$. Altogether,
$\gamma_p(S^*|_N)=\gamma_p(A_N)=\dim\mathcal{L}$.
\end{proof}

\begin{proposition}\label{dimcoro2}
Under the standing setting of this section, we have $\mathcal{L} = W_1$.\end{proposition}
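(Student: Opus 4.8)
The plan is to prove the set equality $\mathcal{L}=W_1$ by showing that $\mathcal{L}$ is a closed subspace of $W$ whose orthogonal complement inside $W$ is exactly $W\cap\KK$. Since $\mathcal{L}=\overline{(U-A_N)N}$ is closed by definition, this will immediately give $\mathcal{L}=W\ominus(W\cap\KK)=W_1$.

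The first step is to verify that $\mathcal{L}\subseteq W$. From the proof of Theorem \ref{Woplus} we already know that $(U-A_N)f=Uf-P_NUf$ belongs to $M=H^2_\KK\ominus N$ for every $f\in N$, so it remains only to check that this vector is also orthogonal to $SM$. Given $g\in M$, the vector $Sg$ lies in $SM\subseteq M$ and is therefore orthogonal to $N$; consequently $\langle P_NUf,Sg\rangle=0$ and hence $\langle(U-A_N)f,Sg\rangle=\langle Uf,Sg\rangle=\langle Sf,Sg\rangle=\langle f,g\rangle=0$, using that $Uf=Sf$ on $H^2_\KK$, that $S$ is an isometry, and that $f\in N\perp M\ni g$. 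Thus $(U-A_N)N\subseteq M\ominus SM=W$, and taking closures gives $\mathcal{L}\subseteq W$.

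The second step computes $W\ominus\mathcal{L}$. For $w\in W\subseteq M$ and $f\in N$ we have $A_Nf=P_NSf\in N$, so $\langle w,A_Nf\rangle=0$ and therefore $\langle w,(U-A_N)f\rangle=\langle w,Uf\rangle=\langle w,Sf\rangle$; hence $w\in W$ is orthogonal to $\mathcal{L}$ if and only if $w\perp SN$ in $H^2_\KK$, i.e.\ $W\ominus\mathcal{L}=W\cap(SN)^\perp$. Now, since $S$ is an isometry and $H^2_\KK=N\oplus M$, we get $SH^2_\KK=SN\oplus SM$ and therefore $H^2_\KK=\KK\oplus SH^2_\KK=\KK\oplus SN\oplus SM$, with $\KK$ identified with the constant functions; thus $(SN)^\perp=\KK\oplus SM$ in $H^2_\KK$ and $W\ominus\mathcal{L}=W\cap(\KK\oplus SM)$. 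Finally, $W\cap(\KK\oplus SM)=W\cap\KK$: the inclusion $\supseteq$ is trivial, while if $w\in W$ is written as $w=k+Sm$ with $k\in\KK$ and $m\in M$, then $w\perp SM$ together with $k\perp Sm$ forces $0=\langle w,Sm\rangle=\norm{Sm}^2$, so $Sm=0$ and $w=k\in W\cap\KK$. Combining the two steps, $\mathcal{L}=W\ominus(W\ominus\mathcal{L})=W\ominus(W\cap\KK)=W_1$.

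All the computations are elementary and follow the pattern already used in the proof of Theorem \ref{Woplus}, so there is no deep obstacle; the only care required is bookkeeping, namely keeping track of whether each inner product and orthogonal complement is taken in $L^2(\T,\KK)$ or in $H^2_\KK$, and using the identification of constant functions with $\KK$. The step I expect to be the real crux is the decomposition $(SN)^\perp=\KK\oplus SM$ inside $H^2_\KK$ together with the trivial-intersection computation $W\cap(\KK\oplus SM)=W\cap\KK$; once these are in place the lemma follows at once.
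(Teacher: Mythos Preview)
Your proof is correct and genuinely different from the paper's. The paper takes the range--kernel route: it writes $\mathcal{L}=\overline{\operatorname{rg}\bigl((U-P_NU)P_N\bigr)}=\ker(P_NU^*P_{N^\perp})^\perp$ with all operators acting on $L^2(\T,\KK)$, decomposes $L^2(\T,\KK)=(L^2\ominus H^2_\KK)\oplus N\oplus SM\oplus W$, and checks piece by piece that the kernel of $P_NU^*P_{N^\perp}$ is everything except $W_1$ (the key computation being that on $W$ this operator reduces to $S^*$, whose kernel is $\KK$). You instead stay entirely inside $H^2_\KK$: you first show directly that $(U-A_N)N\subseteq W$ using that $U=S$ on $H^2_\KK$ and $S$ is isometric, and then identify $W\ominus\mathcal{L}$ via the orthogonal decomposition $H^2_\KK=\KK\oplus SN\oplus SM$. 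Your approach is shorter and avoids both the passage to $L^2(\T,\KK)$ and the adjoint trick; the paper's approach is more mechanical and perhaps generalizes more readily when one wants to compute $\overline{(U^*-A_N^*)N}$ as well. Neither argument actually uses the full-range hypothesis on $M$, which is consistent with the statement being a purely algebraic identity about the wandering subspace.
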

\begin{proof}
From Proposition \ref{dimcoro} we know that $\gamma_p(S^*|_N)=\dim\overline{(U-A_N)(N)}$. Note that
\begin{equation*}
\overline{(U-A_N)(N)}=\overline{\text{rg}(U-P_NU)P_N}
\end{equation*}
where the operator $(U-P_NU)P_N$ is understood as acting in $L^2(\T,\KK)$. Noting that $(U-P_NU)P_N=(I-P_N)UP_N=P_{N^\perp} UP_N$ where $N^\perp=L^2(\T,\KK)\ominus N$ and $P_{N^\perp}$ is the projection in $L^2(\T,\KK)$ onto $N^\perp$, we have that
\begin{equation*}
\overline{\text{rg}(U-P_NU)P_N}=\ker(P_NU^*P_{N^\perp})^\perp.
\end{equation*}

Let us now study the kernel of $P_NU^*P_{N^\perp}:L^2(\T,\KK)\to L^2(\T,\KK)$. To that end, we decompose $L^2(\T,\KK)$ as follows
\begin{equation*}
L^2(\T,\KK)=\left(L^2(\T,\KK)\ominus H^2_\KK\right)\oplus N\oplus S(M)\oplus W
\end{equation*}
where, recall, $M=H^2_\KK\ominus N$ and $W=M\ominus S(M)$. 

First, note that since $L^2(\T,\KK)\ominus H^2_\KK$ is a $U^*$-invariant subspace of $N^\perp$ then
\begin{equation*}
U^*P_{N^\perp}(L^2(\T,\KK)\ominus H^2_\KK)=U^*(L^2(\T,\KK)\ominus H^2_\KK)\subseteq L^2(\T,\KK)\ominus H^2_\KK\subseteq N^\perp,
\end{equation*}
and thus $P_NU^*P_{N^\perp}(L^2(\T,\KK)\ominus H^2_\KK))=\{0\}$. 

Second, the inclusion $N\subseteq\ker(P_NU^*P_{N^\perp})$ follows from $P_{N^\perp}(N)=\{0\}$. 

Third, note that as $S(M)=U(M)\subseteq N^\perp$ then
\begin{equation*}
U^*P_{N^\perp}(S(M)) =U^*U(M)=M,
\end{equation*}
and thus $P_NU^*P_{N^\perp}(S(M))=\{0\}$.

Finally, let $f\in W$ and note that
\begin{equation*}
P_NU^*P_{N^\perp} f=P_NU^*f=P_NS^*f=S^*f
\end{equation*} 
because $W\subseteq N^\perp$, $S^*(W)\subseteq N$ and $N$ is $S^*$-invariant. Thus, if we decompose $W=\KK_0\oplus W_1$, where as before $\KK_0=W\cap\KK$, and we recall that $\ker S^*=\KK$, we have, altogether,
\begin{equation*}
\ker(P_NU^*P_{N^\perp})=\left(L^2(\T,\KK)\ominus H^2_\KK\right)\oplus N\oplus S(M)\oplus \KK_0.
\end{equation*}

It follows that $\ker(P_NU^*P_{N^\perp})^\perp=W_1$ and the proof is finished.
\end{proof}

\begin{corollary}\label{coroOptimal}
The Parseval frames constructed in Proposition \ref{W1} are optimal.
\end{corollary}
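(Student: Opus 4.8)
The plan is to simply combine the identifications already established in this section. Recall that Corollary~\ref{coroOptimal} asserts that the Parseval frames $\{A_N^n(P_Ne_j)\}_{j\in J,n\geqslant0}$ and $\{(S^*)^nS^*E_j\}_{j\in J,n\geqslant0}$ of Proposition~\ref{W1}, which are indexed by a set $J$ of cardinality $\dim(W_1)=\dim(\KK_1)$, are optimal; that is, that $\#J$ equals the Parseval index of the respective operator. By definition of optimality (and recalling that these frames were already shown in Proposition~\ref{W1} to consist of the appropriate iterates), it suffices to prove the two equalities
\begin{equation*}
\gamma_p(A_N)=\dim(\KK_1)\qquad\text{and}\qquad\gamma_p(S^*|_N)=\dim(W_1).
\end{equation*}

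The argument is a short chain of equalities. First, Proposition~\ref{dimcoro} gives $\gamma_p(A_N)=\gamma_p(S^*|_N)=\dim\mathcal{L}$. Next, Proposition~\ref{dimcoro2} identifies $\mathcal{L}=W_1$, so that $\dim\mathcal{L}=\dim(W_1)$. Finally, Lemma~\ref{reduction} gives $\dim(W_1)=\dim(\KK_1)$. Putting these together yields
\begin{equation*}
\gamma_p(A_N)=\gamma_p(S^*|_N)=\dim\mathcal{L}=\dim(W_1)=\dim(\KK_1).
\end{equation*}
Thus the index set $J$ of each of the two frames built in Proposition~\ref{W1} has cardinality exactly equal to the Parseval index of the corresponding operator, and since those generating sets were constructed as images of orthonormal bases (hence linearly independent, as noted for $\{P_Ne_j\}$ and $\{S^*E_j\}$ via injectivity of the relevant projections on $\KK_1$ and $W_1$), the frames are optimal in the sense of the definition.

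There is essentially no obstacle here: the corollary is a bookkeeping consequence of the three results immediately preceding it, and the only thing to be careful about is matching the two index sets, which is exactly what Lemma~\ref{reduction} provides and what Proposition~\ref{W1} already exploited when it used a common index set $J$. If one wanted to be fully explicit, the one point worth a sentence is the linear independence of the generators: $\{E_j\}_{j\in J}$ is orthonormal in $W_1$ and $S^*|_{W_1}$ is injective (its kernel is $W\cap\KK$, which is orthogonal to $W_1$), so $\{S^*E_j\}_{j\in J}$ is linearly independent; likewise $\{e_j\}_{j\in J}$ is orthonormal in $\KK_1$ and $P_N|_{\KK_1}$ is injective (by the argument in the proof of Theorem~\ref{nikc2}, $\KK_1\cap N^\perp=\{0\}$), so $\{P_Ne_j\}_{j\in J}$ is linearly independent. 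This confirms both clauses in the definition of optimal frame of iterations.
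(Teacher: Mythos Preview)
Your proof is correct and follows essentially the same route as the paper: both assemble the chain $\gamma_p(A_N)=\gamma_p(S^*|_N)=\dim\mathcal{L}=\dim W_1=\dim\KK_1$ from Proposition~\ref{dimcoro}, Proposition~\ref{dimcoro2}, and Lemma~\ref{reduction}. The only difference is that where the paper verifies the generators are nonzero (arguing $e_j\in\KK_1\Rightarrow e_j\notin W\Rightarrow e_j\notin M$, and $E_j\in W_1\Rightarrow E_j\notin\ker S^*$), you go a step further and check linear independence via the injectivity of $P_N|_{\KK_1}$ and $S^*|_{W_1}$; this is in fact what the definition of \emph{optimal} asks for, so your version is slightly more complete, though your pointer to Theorem~\ref{nikc2} is not quite the right citation (that argument treats the specific defect-space model, whereas here one uses directly that $\KK\cap M=\KK\cap W$, so $\KK_1\cap M=\{0\}$).
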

\begin{proof}
 
We first notice that if $\{e_j\}_{j\in J}$ and $\{E_j\}_{j\in J}$ are orthonormal bases of $\KK_1$ and $W_1$ respectively then $P_Ne_j$ and $S^*E_j$ are non-zero for all $j\in J.$

This is true because for $ j\in J$ we have $e_j\in\KK_1=\KK\ominus (W\cap\KK)$. Then $e_j\notin W$ which implies that $e_j\notin M= H^2_{\KK}\ominus N$.
Thus $P_Ne_j\neq 0.$

Also since $E_j\in W_1$, then $E_j\notin\KK =\ker(S^*)$, so $S^*E_j\neq 0.$

Now, from Lemma \ref{reduction} and Propositions \ref{dimcoro} and \ref{dimcoro2} we know that 
$$\gamma_p(A_N) =\gamma_p(S^*|_N)=\dim(\mathcal{L})=\dim (W_1)=\dim(\KK_1).$$
Altogether, we have,
 $$\gamma_p(A_N)=\dim(\KK_1) =\#\{e_j: j\in J\}\text{ and }\gamma_p(S^*|_N)=\dim(W_1) =\#\{E_j: j\in J\}.$$
 Therefore, the Parseval frames constructed in Proposition \ref{W1} use the minimal possible number of generators and are hence optimal.
\end{proof}
\subsection{Construction of optimal frames for \texorpdfstring{$T$}{T} and \texorpdfstring{$T^*$}{T*}.}\label{sec:optimalframes}

This subsection presents a construction of optimal Parseval frames of iterations for $T$ and $T^*$ where $T$ is a contraction with the joint strong stability property. Specifically, for
a strongly stable contraction $T$, with strongly stable adjoint $T^*$ we derive vector collections $\{v_j\}_{j\in J}\subseteq\HH$ and 
$\{w_j\}_{j\in J}\subseteq\HH$ such that the sequences of iterations $\{T^nv_j\}_{j\in J,n\geqslant0}$ and $\{(T^*)^nw_j\}_{j\in J, n\geqslant0}$ are Parseval frames of
 $\HH$ with $\gamma_p(T)=\#J=\gamma_p(T^*).$

First, notice that according to Theorem \ref{niko} we know that there exists a unitary operator $V: N\rightarrow\HH$ 
where $N$ is a model space in $H^2_\KK$ that satisfies that 
\begin{equation}\label{eq: T-Testrella}
T = VA_NV^{-1}\text{ and } T^*= VS^*|_N V^{-1},
\end{equation}
and $\KK =\overline{(I_\HH-TT^*)^{1/2}(\HH)}$ is the defect space of $T^*$.
 Since $T^*$ is strongly stable, $A_N$ is strongly stable and by Theorem \ref{Woplus} this implies that $M=H^2_{\KK}\ominus N$ is full range.
Thus, by item $(iv)$ of Theorem \ref{lem:full-range}, $\dim(W)=\dim(\KK)$
where $W=M\ominus S(M)$. Moreover, by the same argument that we used at the end of the proof of Theorem \ref{nikc2}, we have that $W\cap\KK=\{0\}$ because $ W\cap\KK\subseteq M\cap\KK$ and $M\cap\KK=\{0\}$.  Then $W_1=W\ominus(W\cap\KK)=W$.

As a consequence, by Propositions \ref{dimcoro} and \ref{dimcoro2} we have that
\begin{equation*}
\gamma_p(S^*|_N)=\gamma_p(A_N)=\dim\LL=\dim W_1=\dim(W)=\dim(\KK).
\end{equation*}

Since $\dim(W)=\dim(\KK)$, we may choose orthonormal bases  $\{e_j\}_{j\in J}$  of $\KK$ and $\{E_j\}_{j\in J}$  of $W$ indexed by the same $J$ such that the collections
\begin{equation}\label{eqOptimalModel}
\{A_N^n(P_Ne_j)\}_{j\in J,n\geqslant 0}\text{ and }\{(S^*)^n S^*E_j\}_{j\in J,n\geqslant 0} 
\end{equation}
are optimal Parseval frames of $N$ due to Corollary \ref{coroOptimal}.

Now we transfer these frames to $\HH$ using the unitary map $V:N\to\HH$. For this, consider $v_j=V(P_N(e_j))$ and $w_j = V(S^*E_j)$ 
for every $j\in J$. Taking into consideration the relationship \eqref{eq: T-Testrella} and the Parseval frames in \eqref{eqOptimalModel} we conclude that
\begin{equation*}
\{T^n(v_j)\}_{j\in J,n\geqslant 0}\text{ and }\{(T^*)^n (w_j)\}_{j\in J,n\geqslant 0},
\end{equation*}
are Parseval frames of $\HH$.  Finally, observe that $\gamma_p(T)=\gamma_p(A_N)=\gamma_p(S^*|_N)=\gamma_p(T^*)$ so that the frames are optimal. 

While this optimal frame construction works well for contractions, when dealing with an operator $T$ similar to a contraction $C,$ we can construct the optimal frames for $C$ and then use the similarity transformation to get frames for $T$ and $T^*.$
However, the frames obtained for $T$ and $T^*$ 
 via the similarity transformation are typically not optimal and may lack the Parseval property.

\section{Similarity between frames of iterations of \texorpdfstring{$T$ and $T^*$}{T and T*}}\label{estructura}

In this section we examine the similarity between the frames of iterations of an operator $T\in\cB(\HH)$ and its adjoint $T^*$ constructed in Section \ref{sec:optimalframes}. For this we exploit Theorem \ref{cms} giving explicitly the similarity between $T$ and a shift compression in a model space. 

If $\{T^nv_i\}_{i\in I,n\geqslant0}$ is a frame of iterations of $\HH$, where
$I$ is an at most countable index set, and $\{v_i\}_{i\in I}$ is a collection of vectors in $\HH,$ this frame determines its synthesis operator defined as follows.

\begin{definition}\label{synthesis}
The  synthesis operator of the frame of iterations $\{T^n v_i\}_{i\in I,n\geqslant0}$ is the operator $C:H^2_{\ell^2(I)}\to\HH$ defined as
\begin{equation*}
Cf=\sum_{i\in I,n\geqslant0}\langle f,S^ne_i\rangle T^nv_i,
\end{equation*}
where $\{e_i\}_{i\in I}$ is the canonical orthonormal basis of $\ell^2(I)$ and $S$ is the unilateral shift operator acting on $H^2_{\ell^2(I)}$. 
\end{definition}

It is easily seen that $C$ intertwines $S$ with $T$, that is, $T C=C S$ and consequently the subspace $\ker(C)\subseteq H^2_{\ell^2(I)}$ is $S$-invariant. Moreover, as $C$ is surjective, we have that its restriction to the subspace $N:=H^{2}_{\ell^2(I)}\ominus\ker(C)$, which is a model space (since it is $S^*$-invariant), is a bijective map. Also, $C$ satisfies $C(P_Ne_i)=v_i$ for all $i\in I$. Thus, $V:=C|_N$ is a similarity between $T$ and the shift compression $A_N=P_NS|_N$ that maps the frame $\{T^nv_i\}_{i\in I,n\geqslant0}$ in $\HH$ to the frame $\{A_N^n(P_Ne_i)\}_{i\in I,n\geqslant0}$ in $N$.

In short, recognizing that the synthesis operator establishes the similarity and that the representing model space is defined by its kernel allows us to determine when the frames of iterations of an operator and its adjoint are similar. Since frames of iterations of $T$ and $T^*$ are similar to frames of iterations of $A_{N}$ (basic frame) and $S^*$ (\textit{adjoint frame}) respectively, in the same model space $N$, we start by studying the similarity between the two latter.

\subsection{Similarity between frames of iterations of \texorpdfstring{$A_{N}$}{AN} and \texorpdfstring{$S^*$}{S*} in model spaces}

In order to study the similarity between the frames of iterations of $A_N$ and $S^*$ in a model space, we introduce the following definition.

\begin{definition}\label{adjointf}
Let $\{e_i\}_{i\in I}$ be an orthonormal basis of $\KK$, $Q\in\cA_\cI(\KK)$ (see Definition \ref{inner}), and $N=H^2_\KK\ominus\widehat{Q}(H^2_\KK)$. The  adjoint frame of the basic frame $\{A_N^n(P_Ne_i)\}_{i\in I,n\geqslant0}$ is 
\begin{equation*}
\{(S^*)^n S^*E_i\}_{i\in I,n\geqslant0},
\end{equation*}
where $E_i=\widehat{Q}e_i$ for every $i\in I$.
\end{definition}

Notice that since $Q\in\cA_\cI(\KK)$, $\widehat{Q}(H^2_\KK)$ is a full range $S$-invariant space and then, the adjoint frame of the above definition, is the frame constructed in Theorem \ref{frameN} for a particular choice of the orthonormal basis $\{E_i\}_{i\in I}$ of $W$, namely $E_i=\widehat{Q}e_i$ for every $i\in I$.

We address the question of when a {\em basic frame } is similar to its {\em adjoint frame } by providing a complete characterization in terms of the inner functions that define them. 

At this point, we would like to briefly summarize the results and concepts we have obtained so far in an informal way before moving on to the next result.

\subsubsection*{Model spaces, basic frames and adjoint frame}
As a consequence of Theorem \ref{cms}, every frame of iterations is similar to a basic frame $\mathfrak{B}_N=\{A_N^n(P_Ne_i)\}_{i\in I,n\geqslant0}$ in a model space $N\subseteq H_{\KK}^2.$ This basic frame is uniquely determined once a basis of $\KK$ is fixed. 
We denote by 
 $\mathfrak{B}_N^*=\{(S^*)^nS^*E_i\}_{i\in I,n\geqslant0}$ the adjoint frame of $\mathfrak{B}_N$.

Now, given a pair $(\mathcal E,Q)$ where $\mathcal E=\{e_i\}_{i\in I}$ is an orthonormal basis of $\KK$ and $Q\in\cA_\cI(\KK)$, it univocally determines the following objects:

\begin{itemize}
\item $M =\widehat{Q}( H_{\KK}^2)$ a full range $S$-invariant subspace of $H_{\KK}^2$. (see Theorem \ref{lem:full-range});

\item $N = H_{\KK}^2\ominus M$ the model space associated to $M$, which is $S^*$- invariant;

\item $\mathfrak{B}_N =\{A_N^n(P_Ne_i)\}_{i\in I,n\geqslant0}$ the basic frame for $N$;

\item $\mathfrak{B}_N^* =\{(S^*)^nS^*E_i\}_{i\in I,n\geqslant0}$ the adjoint frame of $\mathfrak{B}_N.$
\end{itemize}

The question we now seek to answer is whether a basic frame of iterations $\mathfrak{B}_N$ is similar to its adjoint frame $\mathfrak{B}_N^*.$
Below, we provide an answer. 

We start with the following map, that will be crucial for the characterization:

\begin{definition}
Let $\rho:\cA(\KK)\to\cA(\KK)$ be the mapping given by $\rho(Q)(z):=Q(\overline{z})^*$ for almost every $z\in\T$. 
\end{definition}

\begin{remark}\label{rem:coeficiente-involucion}
\noindent
\begin{enumerate}[label=(\roman*),ref=(\roman*)]
\item The map $\rho$ is well-defined. To see this, first recall that the adjoint of a partial isometry is again a partial isometry. Now let $\{e_i\}_{i\in I}$ be an orthonormal basis of $\KK$ and let $Q\in\cA(\KK)$. Then, given $f\in H^2_\KK$, we have for every $i\in I$ and $n>0$,
$$
\scal{\widehat{\rho(Q)}f,e_i\rchi^{-n}}=\int_\T\scal{Q(\overline{z})^*f(z),e_i\overline{z}^n}_\KK\d z
=\int_\T\scal{f(z),\overline{z}^nQ(\overline{z})e_i}_\KK\d z.
$$
Since $Q\in\cA(\KK)$, the function  $g$ defined by $g(z)=z^nQ(z)e_i$  belongs to  $H^2_\KK$ and since $n>0$, $g(0)=0$. Thus, the function $\tilde g$ given  by $\tilde g(z)= g(\overline{z})$ belongs to $ L^2(\T,\KK)\ominus H^2_\KK$. Hence, $\scal{\widehat{\rho(Q)}f,e_i\rchi^{-n}}=0$ for all $i\in I$ and $n>0$.  Therefore, $\widehat{\rho(Q)}f\in( L^2(\T,\KK)\ominus H^2_\KK)^\perp=H^2_\KK$, proving that $\rho(Q)\in \cA(\KK)$.

\item From the definition, it is clear that $\rho$ is an involution, that is, $\rho(\rho(Q))=Q$.

\item 
The map $\rho$ sends inner functions to inner functions. Indeed, if
$Q\in\cA_\cI(\KK)$, then $\rho(Q)\in\cA(\KK)$ by item (i). Moreover,
$
\rho(Q)(z)=Q(\overline z)^*
$
is unitary for almost every $z\in\T$, since $Q$ is inner. Hence $\rho(Q)$ is also inner.

\end{enumerate}
\end{remark}

We are now ready to prove the main result of this section.

\begin{theorem}\label{kerC_frameS*}
Let $\mathcal E=\{e_i\}_{i\in I}$ be an orthonormal basis of $\KK$, let $Q\in\cA_\cI(\KK)$ and consider the model subspace $N=H^2_\KK\ominus\widehat{Q}(H^2_\KK)$. Let $\mathfrak{B}_N^*=\{(S^*)^nS^*E_i\}_{i\in I,n\geqslant0}$ be the adjoint frame of $\mathfrak{B}_N=\{A_N^n(P_Ne_i)\}_{i\in I,n\geqslant0}$. Then, the kernel of the synthesis operator $C:H^{2}_{\KK}\to N$ of $\mathfrak{B}_N^*$ is given by
\begin{equation*}
\ker C =\widehat{\rho(Q)}H^2_\KK.
\end{equation*}
\end{theorem}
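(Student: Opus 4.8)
The plan is to compute the synthesis operator $C$ of the adjoint frame $\mathfrak{B}_N^*=\{(S^*)^nS^*E_i\}_{i\in I,n\geqslant0}$ explicitly and identify its kernel. Recall that, by Definition \ref{synthesis}, for $f\in H^2_{\KK}$ (identified with $H^2_{\ell^2(I)}$ via the basis $\mathcal E$) we have $Cf=\sum_{i\in I,n\geqslant0}\scal{f,S^ne_i}(S^*)^nS^*E_i$ with $E_i=\widehat{Q}e_i$. The first step is to recognize, as was shown in the proof of Theorem \ref{frameN}, that the adjoint frame is the orthogonal projection onto $N$ of the orthonormal basis $\{U^nE_i\}_{i\in I,n\in\Z}$ of $\bigoplus_{n\in\Z}U^nW=L^2(\T,\KK)$ (using that $M=\widehat{Q}(H^2_\KK)$ is full range); more precisely $(S^*)^nS^*E_i=P_N U^{-n-1}E_i$ for $n\geqslant0$, while $P_N U^mE_i=0$ for $m\geqslant0$. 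The idea then is to extend $C$ to a map defined on all of $L^2(\T,\KK)$ using this description, and to recognize that extended map as (a restriction of) $P_N\widehat{Q}U^*$ or a close variant, so that $\ker C$ can be read off.

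Concretely, I would argue as follows. Write $f=\sum_{m\geqslant0}b_m\rchi^m\in H^2_\KK$ and compute $\scal{f,S^ne_i}=\scal{f,e_i\rchi^n}=\scal{b_n,e_i}_\KK$. Hence
\begin{equation*}
Cf=\sum_{n\geqslant0}\sum_{i\in I}\scal{b_n,e_i}_\KK\,P_NU^{-n-1}\widehat{Q}e_i=\sum_{n\geqslant0}P_NU^{-n-1}\widehat{Q}b_n,
\end{equation*}
using that $\widehat{Q}$ is bounded and linear and $\{e_i\}$ is an orthonormal basis. Now $\widehat{Q}b_n$ is the constant function $z\mapsto Q(z)b_n$, and $U^{-n-1}\widehat{Q}b_n$ is the function $z\mapsto \overline{z}^{n+1}Q(z)b_n\in L^2(\T,\KK)$. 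Summing, $\sum_{n\geqslant0}\overline{z}^{n+1}Q(z)b_n = \overline{z}\,Q(z)\big(\sum_{n\geqslant0}\overline{z}^nb_n\big)$, i.e. $Cf = P_N\big(\widehat{Q}\,\widehat{g}\big)$ where $g(z)=\overline{z}\sum_{n\geqslant0}\overline{z}^nb_n = U^*\overline{f}$ in the sense $g(z)=\frac{1}{z}f(\overline z)$ (a function in $L^2(\T,\KK)$). The key computation is therefore to show that $\widehat{Q}\,\widehat{g}$, after projecting onto $N$, vanishes exactly when $f\in\widehat{\rho(Q)}H^2_\KK$. Since $P_N = P_{H^2_\KK} - P_M$ with $M=\widehat{Q}(H^2_\KK)$, and $\widehat{Q}$ is unitary on $L^2(\T,\KK)$ (as $Q$ is inner), one checks: $(\widehat{Q}\,\widehat{g})(z)=Q(z)g(z)$; this lands in $M^\perp\cap H^2_\KK = N$ minus its $M$-part, and the condition $P_N(\widehat{Q}\widehat g)=0$ translates, after using $\scal{Q(z)g(z),Q(z)h(z)}=\scal{g(z),h(z)}$, into $g(z)\perp H^2_\KK$ in $L^2(\T,\KK)$, which by the definition of $\rho$ and Remark \ref{rem:coeficiente-involucion}(i) is precisely $f\in\widehat{\rho(Q)}H^2_\KK$.

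The cleanest route for the last translation is: $\widehat{Q}\widehat g\in H^2_\KK$ iff $Q(\cdot)g(\cdot)$ has vanishing negative Fourier coefficients, and since $Q$ is inner the map $h\mapsto \widehat{Q}h$ carries $L^2(\T,\KK)\ominus H^2_\KK$ onto $M^\perp\ominus N$-type complement; one shows $\widehat{Q}\widehat g\in N$ forces $\widehat g\in \widehat{Q^{-1}}N$, and then $Cf = 0$ is equivalent to $\widehat{Q}\widehat g \in M$, i.e. $\widehat g\in H^2_\KK$, i.e. $g\in H^2_\KK$. Unwinding $g(z)=\frac1z f(\overline z)$, the condition $g\in H^2_\KK$ means all Fourier coefficients of $z\mapsto f(\overline z)$ of index $\geqslant 0$ vanish after the shift, equivalently $f(\overline\cdot)\in \overline{z}\,H^2_\KK\cap L^2$, equivalently $f\in \widehat{\rho(Q)}H^2_\KK$ by comparing with the computation in Remark \ref{rem:coeficiente-involucion}(i) that shows $\widehat{\rho(Q)}f\in H^2_\KK$ exactly via the substitution $z\mapsto\overline z$. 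Both inclusions $\ker C\subseteq\widehat{\rho(Q)}H^2_\KK$ and $\widehat{\rho(Q)}H^2_\KK\subseteq\ker C$ then follow by tracing the equivalences, using $\rho(\rho(Q))=Q$ to get the reverse direction symmetrically.

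\textbf{Main obstacle.} The delicate point will be bookkeeping with the complex conjugation $z\mapsto\overline z$ hidden in $U^*$ acting on constants and in the definition of $\rho$: one must be careful that $U^{-n-1}$ applied to the constant function $Q(\cdot)b_n$ produces $\overline z^{\,n+1}Q(z)b_n$ and that re-summing the geometric-type series is valid in $L^2(\T,\KK)$ (it is, since $\sum\|b_n\|^2<\infty$ and $\|Q\|_\infty\le1$), and then that identifying the resulting function with $\widehat{\rho(Q)}(\text{something in }H^2_\KK)$ matches the Fourier-coefficient computation of Remark \ref{rem:coeficiente-involucion}(i) on the nose. Getting the projection $P_N$ versus $P_{H^2_\KK}$ straight — i.e. that $P_N(\widehat Q\widehat g)=0$ is genuinely equivalent to $\widehat Q\widehat g\in M\oplus(L^2\ominus H^2_\KK)$ and not merely to $\widehat Q\widehat g\perp N$ within $H^2_\KK$ — is where a careless argument could go wrong, so I would handle the three-way decomposition $L^2(\T,\KK)=N\oplus M\oplus(L^2\ominus H^2_\KK)$ explicitly.
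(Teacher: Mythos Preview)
Your overall strategy --- computing $Cf$ directly as $P_N$ applied to the $L^2(\T,\KK)$-function $z\mapsto Q(z)g(z)$ with $g(z)=\overline z\, f(\overline z)$, and then reading off the kernel --- is viable and genuinely different from the paper's route. The paper instead computes $\scal{Cf,S^me_j}$, uses the substitution $z\mapsto\overline z$ at the level of scalar integrals to rewrite this as $\scal{f,(S^*)^mS^*\widehat{\rho(Q)}e_j}$ via Parseval, and then invokes Theorem~\ref{frameN} for the inner function $\rho(Q)$ to identify $\{(S^*)^mS^*\widehat{\rho(Q)}e_j:j\in I,m\geqslant0\}^\perp=\widehat{\rho(Q)}H^2_\KK$.

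However, your execution of the key translation is incorrect. You assert that $Cf=0$, i.e.\ $\widehat{Q}g\perp N$, is equivalent to $\widehat{Q}g\in M$, and hence to $g\in H^2_\KK$. But $g(z)=\overline z\,f(\overline z)$ always lies in $L^2(\T,\KK)\ominus H^2_\KK$ (its Fourier coefficients are supported on $\{-1,-2,\dots\}$), so $g\in H^2_\KK$ forces $f=0$ and your condition would give $\ker C=\{0\}$. Your earlier attempt, ``$g\perp H^2_\KK$'', is vacuously true and would give $\ker C=H^2_\KK$. The issue is exactly the one you flag in your ``Main obstacle'': $P_N(\widehat{Q}g)=0$ means $\widehat{Q}g\in N^\perp=M\oplus(L^2\ominus H^2_\KK)$ in $L^2(\T,\KK)$, not $\widehat{Q}g\in M$.

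Here is how to close the gap along your lines. Since $g\in L^2\ominus H^2_\KK$ and $\widehat{Q}$ is unitary with $\widehat{Q}(H^2_\KK)=M$, one has $\widehat{Q}g\in\widehat{Q}(L^2\ominus H^2_\KK)=L^2\ominus M$. Intersecting with $N^\perp=M\oplus(L^2\ominus H^2_\KK)$ inside the three-fold decomposition $L^2=N\oplus M\oplus(L^2\ominus H^2_\KK)$ yields the correct condition
\[
Cf=0\quad\Longleftrightarrow\quad \widehat{Q}g\in L^2\ominus H^2_\KK.
\]
Now apply the substitution $z\mapsto\overline z$: writing $h(z)=Q(z)g(z)$ one has $h(\overline w)=w\,Q(\overline w)f(w)=w\,\rho(Q)(w)^*f(w)$, and $h\in L^2\ominus H^2_\KK$ is equivalent to $w\mapsto h(\overline w)$ lying in $\rchi H^2_\KK$, i.e.\ $\widehat{\rho(Q)}^*f\in H^2_\KK$, i.e.\ $f\in\widehat{\rho(Q)}H^2_\KK$. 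So your route does reach the conclusion, but the middle step must be rewritten with the correct target $L^2\ominus H^2_\KK$ (not $M$, not $H^2_\KK$) for $\widehat{Q}g$.
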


\begin{proof}
A function $f\in H^2_\KK$ belongs to $\ker C$ if and only if for all $m\geqslant 0$ and $j\in I$ we have that $\scal{Cf,S^me_j}=0$. Fix $m\geqslant0, j\in I$ and use the definition of $C$ to get
\begin{align*}
\scal{Cf,S^me_j}&=\sum_{i\in I,n\geqslant0}\scal{f,S^ne_i}\scal{(S^*)^n(S^*E_i),S^me_j}.
\end{align*}

Then, by replacing $E_i =\widehat{Q}e_i$ we have
\begin{equation*}
\scal{Cf,S^me_j}=\sum_{i\in I,n\geqslant0}\scal{f,S^ne_i}\scal{(S^*)^{n+m}(S^*\widehat{Q}e_i),e_j} =\sum_{i\in I, n\geqslant0}\scal{f,S^ne_i}\scal{\widehat{Q}e_i, S^{n+m+1} e_j}.
\end{equation*}

Note that
\begin{equation*}
\scal{\widehat{Q}e_i,S^{n+m+1}e_j}=\int_\T\scal{Q(z)e_i,z^{n+m+1}e_j}\d z=\int_\T\scal{e_i, z^{n+m+1}Q(z)^*e_j}\d z.
\end{equation*}
Next, we will use the following fact: if $g(z)=\sum_{n\in\Z}a_nz^n$ is a scalar function in $L^2(\T)$ then $h(z):=g(\overline{z})=\sum_{n\in\Z}a_n\overline{z}^n=\sum_{n\in\Z}a_{-n}z^n$ has the same $0^{th}$-Fourier coefficient than $g$. Applying this to $g(z)=\scal{e_i, z^{n+m+1}Q(z)^*e_j}$ yields
\begin{align*}
\scal{\widehat{Q}e_i,S^{n+m+1}e_j}&=\int_\T\scal{e_i,\overline{z}^{n+m+1}Q(\overline{z})^*e_j}\d z
\\
&=\int_\T\scal{z^{n+m+1}e_i,Q(\overline{z})^*e_j}\d z
\\
&=\scal{S^{n+m+1}e_i,\widehat{\rho(Q)}e_j}
\\
&=\scal{S^ne_i,(S^*)^mS^*\widehat{\rho(Q)}e_j}.
\end{align*}

Altogether, this gives
\begin{equation*}
\scal{Cf,S^me_j}=\sum_{i\in I,n\geqslant0}\scal{f,S^ne_i}\scal{S^ne_i, (S^*)^{m} S^* (\widehat{\rho(Q)} e_j)}.
\end{equation*}

By using Parseval's identity with the orthonormal basis $\{S^n e_i\}_{i\in I,n\geqslant0}$ we get
\begin{equation*}
\scal{Cf,S^me_j} =\scal{f, (S^*)^{m} S^* (\widehat{\rho(Q)} e_j)}.
\end{equation*}

Therefore
\begin{equation*}
\ker C=\{(S^*)^mS^*(F_j):j\in I,m\geqslant0\}^\perp
\end{equation*}
where $F_i=\widehat{\rho(Q)} e_i\in H^2_\KK$ for every $i\in I$. 

Now, since $Q\in\cA_\cI$, $Q(z)$ is a unitary operator for almost every $z\in\T$ and then, so is $Q(\overline{z})^*$ for almost every $z\in\T$. Thus, as for every $i\in I$, $F_i(z)=Q(\overline{z})^*e_i$ for almost every $z\in\T$, we get that $\{F_i(z)\}_{i\in I}$ is an orthonormal basis of $\KK$ for almost every $z\in\T$.
Further, the following identity holds 
\begin{equation*}
Q(\overline{z})^*Q(z)^{-1} E_i(z)=Q(\overline{z})^*e_i=F_i(z),
\end{equation*}
for almost every $z\in\T$. Therefore, the analytic operator-valued function $V:\T\to\cB(\KK)$ given by $V(z):=Q(\overline{z})^*Q(z)^{-1}$ for almost every $z\in\T$ satisfies that $V(z)$ is a unitary operator that sends the orthonormal basis $\{E_i(z)\}_{i\in I}$ to $\{F_i(z)\}_{i\in I}$ for almost every $z\in\T$.
Then, $\widehat{V}:L^2 (\T,\KK)\to L^2 (\T,\KK)$ defines an isometry on $L^2 (\T,\KK)$ that commutes with the bilateral shift $U$ (in $L^2 (\T,\KK)$) and maps $\{E_i\}_{i\in I}$ to $\{F_i\}_{i\in I}$. 

Denoting by $M=\widehat{Q}(H^2_\KK)$, as $\{E_i\}_{i\in I}$ is an orthonormal basis of $M\ominus S(M)$ and $\widehat{V}(M)$ is $S$-invariant, $\{F_i\}_{i\in I}$ must be an orthonormal basis of $\widehat{V}(M)\ominus S\widehat{V}(M)$. Then, on one hand, by Lemma \ref{lem:wandering-frame}, $\{S^nF_i\}_{i\in I,n\geqslant0}$ is an orthonormal basis of $\widehat{V}(M)$. On the other hand, we can use Theorem \ref{frameN} to conclude that system $\{(S^*)^nS^*(F_i)\}_{i\in I,n\geqslant0}$ is a Parseval frame of $\widehat{V}(M)^{\perp}$. 
Finally, 
\begin{align*}
\ker C&=\{(S^*)^nS^*(F_i):i\in I,n\geqslant0\}^\perp=(\widehat{V}(M)^{\perp})^\perp=\widehat{V}(M)
\\
&=\overline{\text{span}}\{S^nF_i\}_{i\in I,n\geqslant0}=\widehat{\rho(Q)} H^2_\KK.
\end{align*}
This completes the proof.
\end{proof}

As a consequence of Theorem \ref{kerC_frameS*} and Theorem \ref{cms} we obtain the following result.

\begin{corollary}\label{basic_tuple_S*}
Let $\{e_i\}_{i\in I}$ be an orthonormal basis of $\KK$, $Q\in\cA_\cI(\KK)$ and consider the model space $N = H^2_\KK\ominus\widehat{Q}H^2_\KK$. Then, the basic frame associated to $\mathfrak{B}_N^*=\{(S^*)^n S^*E_i\}_{i\in I,n\geqslant0}$
 is given by
$$\mathfrak{B}_{N_\rho}=\{A_{N_\rho}^n(P_{N_\rho}e_i)\}_{i\in I,n\geqslant0}$$
where $N_\rho= H^2_\KK\ominus\widehat{\rho(Q)}H^2_\KK$.
\end{corollary}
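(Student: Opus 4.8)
**Proof proposal for Corollary \ref{basic_tuple_S*}.**

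The plan is to unwind the meaning of "basic frame associated to an adjoint frame" through the machinery of Theorem \ref{cms} and the synthesis operator described at the beginning of Section \ref{estructura}. Recall that the adjoint frame $\mathfrak{B}_N^*$ is a frame of iterations of $S^*|_N$ in $N$; by part (ii) of Theorem \ref{cms} (or Proposition \ref{basic}(ii)) it is unitarily equivalent to a basic frame $\{A_{N'}^n(P_{N'}e_i)\}_{i\in I,n\geqslant0}$ in a model space $N'\subseteq H^2_{\ell^2(I)}$, and the space $N'$ is precisely $H^2_{\ell^2(I)}\ominus\ker C$, where $C$ is the synthesis operator of $\mathfrak{B}_N^*$. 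Since the frame $\mathfrak{B}_N^*$ is already Parseval and indexed by an orthonormal basis of $\KK$ coming from the inner function $Q$, I will identify $\ell^2(I)$ with $\KK$ via $e_i\mapsto e_i$, so that $C:H^2_\KK\to N$ and $N' = H^2_\KK\ominus\ker C$.

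First I would invoke Theorem \ref{kerC_frameS*}, which is exactly the computation that $\ker C = \widehat{\rho(Q)}H^2_\KK$. Second, since $\rho(Q)\in\cA_\cI(\KK)$ by Remark \ref{rem:coeficiente-involucion}(iii), the subspace $\widehat{\rho(Q)}H^2_\KK$ is a full range $S$-invariant subspace of $H^2_\KK$ (Theorem \ref{lem:full-range}), and so its orthogonal complement
\begin{equation*}
N_\rho := H^2_\KK\ominus\widehat{\rho(Q)}H^2_\KK = H^2_\KK\ominus\ker C = N'
\end{equation*}
is a genuine model space. Third, I would check that the operator $V:=C|_{N_\rho}:N_\rho\to N$ is the unitary map realizing the equivalence: $C$ intertwines $S$ with $S^*|_N$ restricted to $N_\rho$, and since $\mathfrak{B}_N^*$ is Parseval the synthesis operator is a coisometry with $\ker C^\perp = N_\rho$, hence $V$ is unitary; moreover $V(P_{N_\rho}e_i) = C(P_{N_\rho}e_i) = S^*E_i$, which is the $i$-th generator of $\mathfrak{B}_N^*$. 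Therefore $V A_{N_\rho} V^{-1} = S^*|_N$ and $V$ sends the basic frame $\{A_{N_\rho}^n(P_{N_\rho}e_i)\}$ onto $\mathfrak{B}_N^*$, which is by definition what it means for $\mathfrak{B}_{N_\rho}$ to be the basic frame associated to $\mathfrak{B}_N^*$.

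The only genuinely nontrivial input is Theorem \ref{kerC_frameS*}; everything else here is bookkeeping: verifying that $C|_{N_\rho}$ is unitary (which follows from $\mathfrak B_N^*$ being a Parseval frame, so that $C$ is a partial isometry with cokernel $N$ and initial space $N_\rho$), and that it carries generators to generators. The main subtlety to be careful about is the identification $\ell^2(I)\cong\KK$ used to match the abstract construction of Section \ref{estructura} (which lives in $H^2_{\ell^2(I)}$) with the concrete model in $H^2_\KK$; once one fixes the correspondence $e_i\leftrightarrow e_i$ this is harmless, and then the displayed formula $N_\rho = H^2_\KK\ominus\widehat{\rho(Q)}H^2_\KK$ is immediate from Theorem \ref{kerC_frameS*}.
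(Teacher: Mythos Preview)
Your proposal is correct and follows exactly the route the paper intends: the paper states the corollary as an immediate consequence of Theorem \ref{kerC_frameS*} together with Theorem \ref{cms} (via the synthesis-operator construction described at the start of Section \ref{estructura}), and you have simply spelled out those details explicitly, including the identification $\ell^2(I)\cong\KK$, the fact that the Parseval property makes $C|_{N_\rho}$ unitary, and the verification $C(P_{N_\rho}e_i)=S^*E_i$.
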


Note that, by item $(iii)$ of Remark \ref{rem:coeficiente-involucion}, the $S$-invariant subspace $\widehat{\rho(Q)}H^2_\KK$ is full range. 

The involutive nature of $\rho$ implies that if we start with the basic frame $\mathfrak{B}_{N_\rho}$ and repeat the process of taking the adjoint frame and then finding its associated basic frame, we return to $\mathfrak{B}_{N}$. 
To summarize, let us denote by $\boldsymbol\alpha$ the correspondence that assigns to a frame of iterations, its similar basic frame.
Furthermore, given any basic frame $\mathfrak{B}_{\widetilde N}$ in a model space $\widetilde{N}$, whose orthogonal complement is full range, we denote by 
$\boldsymbol\beta(\mathfrak{B}_{\widetilde N})$ its adjoint frame.

Then, taking the adjoint frame of a basic one and then its associated basic frame, must be performed twice to recover the original basic frame we started with. That is, $(\boldsymbol{\alpha\beta})^2(\mathfrak{B}_N)$ is similar to $\mathfrak{B}_N$ as we show in the following diagram:
\begin{equation*}
\begin{CD}
\mathfrak{B}_{N}=\left\{A_{N}^n (P_{N}e_i)\right\} _{i\in I,n\geqslant0}  @>\boldsymbol\beta>>\mathfrak{B}_{N}^*=\left\{(S^{*})^nS^* (\widehat{Q}e_i)\right\}_{i\in I,n\geqslant 0}\\
@A\boldsymbol\alpha AA  @VV\boldsymbol\alpha V\\
\mathfrak{B}_{N_{\rho}}^*=\left\{(S^{*})^nS^* (\rho(\widehat{Q})e_i)\right\}_{i\in I,n\geqslant 0}  @<\boldsymbol\beta<<  \mathfrak{B}_{N_{\rho}}=\left\{A_{N_{\rho}}^n (P_{N_{\rho}}e_i)\right\} _{i\in I,n\geqslant0} 
\end{CD}
\end{equation*}

\begin{proposition} 
Let $\{e_i\}_{i\in I}$ be an orthonormal basis of $\KK$, $Q\in\cA_\cI(\KK)$ and consider the model space $N=H^2_\KK\ominus\widehat{Q}H^2_\KK$. Then the basic frame associated to $\mathfrak{B}_{N}^*=\{ (S^*)^n S^* (\widehat{Q}e_i)\}_{i\in I,n\geqslant0}$, has an adjoint frame which is similar to $\{A_{N}^n(P_{N}e_i)\}_{i\in I,n\geqslant0}$.
That is, 
$$\boldsymbol\alpha\boldsymbol\beta\boldsymbol\alpha (\mathfrak{B}_{N}^*)=\mathfrak{B}_{N}.$$
\end{proposition}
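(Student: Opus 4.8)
The plan is to chase the diagram that immediately precedes the statement, using Corollary~\ref{basic_tuple_S*} twice together with the involutivity of $\rho$ recorded in Remark~\ref{rem:coeficiente-involucion}. Recall that $\boldsymbol\alpha$ sends a frame of iterations to the unique basic frame similar to it (uniqueness holds because the representing model space is $H^2_{\ell^2(I)}\ominus\ker C$, which is determined by the synthesis operator $C$ of the frame, as in the discussion around Definition~\ref{synthesis}), and that $\boldsymbol\beta$ sends a basic frame $\mathfrak{B}_{\widetilde N}$ whose complement $\widetilde N^\perp$ is full range to its adjoint frame $\mathfrak{B}_{\widetilde N}^*$.

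First I would apply Corollary~\ref{basic_tuple_S*} to compute $\boldsymbol\alpha(\mathfrak{B}_N^*)$: it gives that the basic frame associated to $\mathfrak{B}_N^*$ is $\mathfrak{B}_{N_\rho}=\{A_{N_\rho}^n(P_{N_\rho}e_i)\}_{i\in I,n\geqslant0}$, where $N_\rho=H^2_\KK\ominus\widehat{\rho(Q)}H^2_\KK$. Next, since $Q\in\cA_\cI(\KK)$ implies $\rho(Q)\in\cA_\cI(\KK)$ by item $(iii)$ of Remark~\ref{rem:coeficiente-involucion}, the subspace $\widehat{\rho(Q)}H^2_\KK$ is full range by item $(ii)$ of Theorem~\ref{lem:full-range}; hence $\boldsymbol\beta$ is defined at $\mathfrak{B}_{N_\rho}$ and, by Definition~\ref{adjointf}, $\boldsymbol\beta(\mathfrak{B}_{N_\rho})=\mathfrak{B}_{N_\rho}^*=\{(S^*)^nS^*(\widehat{\rho(Q)}e_i)\}_{i\in I,n\geqslant0}$. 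Finally I would invoke Corollary~\ref{basic_tuple_S*} a second time, now with $\rho(Q)$ playing the role of $Q$ and the same orthonormal basis $\{e_i\}_{i\in I}$ of $\KK$: it yields that the basic frame associated to $\mathfrak{B}_{N_\rho}^*$ is $\mathfrak{B}_{N''}$ with $N''=H^2_\KK\ominus\widehat{\rho(\rho(Q))}H^2_\KK$. By the involution property $\rho(\rho(Q))=Q$ of item $(ii)$ of Remark~\ref{rem:coeficiente-involucion}, we get $N''=H^2_\KK\ominus\widehat{Q}H^2_\KK=N$, so $\boldsymbol\alpha\boldsymbol\beta\boldsymbol\alpha(\mathfrak{B}_N^*)=\mathfrak{B}_N$, which is exactly the claimed identity (in particular $\boldsymbol\beta\boldsymbol\alpha(\mathfrak{B}_N^*)=\mathfrak{B}_{N_\rho}^*$ is similar to $\mathfrak{B}_N$, since $\boldsymbol\alpha$ maps it to $\mathfrak{B}_N$).

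I do not expect a genuine obstacle here: the statement is essentially a formal consequence of the already-established Corollary~\ref{basic_tuple_S*} and the fact that $\rho$ is an involution. The only points requiring care are checking that the hypotheses of Corollary~\ref{basic_tuple_S*} are still met at the second application — namely that $\rho(Q)$ is again inner so that $N_\rho$ is a bona fide model space with full range complement — and verifying that $\boldsymbol\alpha$ is well defined, i.e. that the basic frame similar to a given frame of iterations is independent of any choices, which is guaranteed by the fact that it is read off the kernel of the (uniquely determined) synthesis operator.
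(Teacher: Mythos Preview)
Your proposal is correct and follows essentially the same approach as the paper's own proof: apply Corollary~\ref{basic_tuple_S*} once to obtain $\boldsymbol\alpha(\mathfrak{B}_N^*)=\mathfrak{B}_{N_\rho}$, take its adjoint frame $\mathfrak{B}_{N_\rho}^*$, apply Corollary~\ref{basic_tuple_S*} a second time with $\rho(Q)$ in place of $Q$, and conclude via the involutivity of $\rho$. Your version is in fact slightly more careful than the paper's, since you explicitly verify that $\rho(Q)\in\cA_\cI(\KK)$ (so that $\boldsymbol\beta$ is defined at $\mathfrak{B}_{N_\rho}$ and the second application of the corollary is legitimate).
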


\begin{proof}
By Corollary \ref{basic_tuple_S*}, the basic frame associated to $\mathfrak{B}_{N}^*$ is $\{A_{N_{\rho}}^n(P_{N_{\rho}}e_i)\}_{i\in I,n\geqslant0}$
where $N_{\rho} = H^2_\KK\ominus\widehat{\rho(Q)}H^2_\KK$. The adjoint frame of the latter is $\{(S^*)^nS^*\widehat{\rho(Q)}e_i\}_{i\in I,n\geqslant0}$.

Again, by Corollary \ref{basic_tuple_S*} the basic frame associated to $\{(S^*)^nS^*\widehat{\rho(Q)}e_i\}_{i\in I,n\geqslant0}$ is built up from the model space $H^2_\KK\ominus\widehat{\rho(\rho(Q))}H^2_\KK$. But, since $\rho$ is an involution, we have that 
\begin{equation*}
\widehat{\rho(\rho(Q))}H^2_\KK=\widehat{Q} H^2_\KK=N, 
\end{equation*}
and thus, the basic frame obtained is $\{A_N^n(P_Ne_i)\}_{i\in I,n\geqslant0}$ as we wanted.
\end{proof}

Due to Corollary \ref{basic_tuple_S*}, the question of when a basic frame and its adjoint frame are similar reduces to asking when $\widehat{Q}$ and $\widehat{\rho(Q)}$ coincide.

To answer this, we need to recall that the partial order between $S$-invariant subspaces of $H^2_\KK$ characterized by inner functions induces a relation between the corresponding inner functions as the following proposition says.

\begin{proposition}[{\cite[Theorem 10]{H}}]\label{inclusion_inner}
Let $Q_1,Q_2\in\cA_\cI$. Then $\widehat{Q}_{1}(H^2_\KK)\subseteq\widehat{Q}_2(H^2_\KK)$ if and only if $Q_2^*Q_1\in\cA_\cI$.
\end{proposition}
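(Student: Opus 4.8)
The plan is to reduce the statement to the simple observation that, for an inner function $Q\in\cA_\cI(\KK)$, the multiplication operator $\widehat{Q}$ is not merely an isometry on $H^2_\KK$ but a \emph{unitary} operator on all of $L^2(\T,\KK)$, whose inverse is $\widehat{Q^*}$, the multiplication operator with symbol $z\mapsto Q(z)^*$. This holds because $Q(z)$ unitary a.e.\ gives $\|\widehat{Q}f\|=\|f\|$ for every $f\in L^2(\T,\KK)$, while a pointwise adjoint computation yields $(\widehat{Q})^*=\widehat{Q^*}$, and $\widehat{Q^*}\widehat{Q}=\widehat{Q^*Q}=\widehat{I_\KK}=I$ and symmetrically. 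Note that $\widehat{Q^*}$ is itself well defined and bounded since $\|Q^*\|_\infty=\|Q\|_\infty=1$.

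Before the two implications I would record two elementary facts: that the composition of two such multiplication operators is again multiplication by the pointwise product, $\widehat{A}\widehat{B}=\widehat{AB}$; and that $z\mapsto Q_2(z)^*Q_1(z)$ is a unitary operator for a.e.\ $z$ (a product of unitaries) with $\|Q_2^*Q_1\|_\infty\leqslant1$, so that $Q_2^*Q_1\in\cA_\cI(\KK)$ if and only if it is analytic, i.e.\ $\widehat{Q_2^*Q_1}(H^2_\KK)\subseteq H^2_\KK$.

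For the ``only if'' direction I would assume $\widehat{Q}_1(H^2_\KK)\subseteq\widehat{Q}_2(H^2_\KK)$ and apply the unitary $\widehat{Q}_2^{-1}=\widehat{Q_2^*}$ to both sides; since $\widehat{Q_2^*}\widehat{Q}_1=\widehat{Q_2^*Q_1}$ and $\widehat{Q_2^*}\widehat{Q}_2=I$ one obtains
\begin{equation*}
\widehat{Q_2^*Q_1}(H^2_\KK)=\widehat{Q_2^*}\bigl(\widehat{Q}_1(H^2_\KK)\bigr)\subseteq\widehat{Q_2^*}\bigl(\widehat{Q}_2(H^2_\KK)\bigr)=H^2_\KK,
\end{equation*}
so $Q_2^*Q_1$ is analytic, hence in $\cA_\cI(\KK)$. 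For the ``if'' direction I would set $R:=Q_2^*Q_1\in\cA_\cI(\KK)$; since $Q_2(z)Q_2(z)^*=I_\KK$ a.e.\ we get $Q_2(z)R(z)=Q_1(z)$ a.e., whence $\widehat{Q}_2\widehat{R}=\widehat{Q}_1$, and as $\widehat{R}(H^2_\KK)\subseteq H^2_\KK$ by analyticity of $R$, this gives $\widehat{Q}_1(H^2_\KK)=\widehat{Q}_2\bigl(\widehat{R}(H^2_\KK)\bigr)\subseteq\widehat{Q}_2(H^2_\KK)$, as required.

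There is no genuinely hard step; the one point deserving care is that $\widehat{Q}$ is unitary on the whole of $L^2(\T,\KK)$ (not only isometric on $H^2_\KK$, as used earlier in the paper) with inverse $\widehat{Q^*}$, since it is precisely this invertibility on $L^2$ that drives the ``only if'' direction. One should also check that products and adjoints of measurable operator-valued symbols remain measurable so that $Q_2^*Q_1$ and $Q_2^*$ genuinely define elements of the symbol class, but this is routine.
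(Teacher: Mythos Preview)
The paper does not supply its own proof of this proposition; it is simply quoted from Helson \cite[Theorem 10]{H}. Your argument is correct and is in fact the standard one: the key observation that $\widehat{Q}$ is unitary on $L^2(\T,\KK)$ with inverse $\widehat{Q^*}$ is already invoked in the paper (see the proof of Theorem \ref{lem:full-range}, where \cite[Corollary 3.19]{RR} is cited for exactly this), and once that is in hand both directions follow immediately as you indicate.
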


Also, in the finite-dimensional case, the next result holds.

\begin{proposition}[{\cite[Corollary of Theorem 11]{H}}]\label{det_const}
Let $\KK$ a finite-dimensional space and $Q\in\cA_\cI$. If the determinant $q:\T\to\C$, $q(z)=\det(Q(z))$ is constant, then so is $Q$. 
\end{proposition}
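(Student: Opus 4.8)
The plan is to reduce the statement to a scalar fact about the Hardy space $H^2$ by means of the determinant, and then invoke the classical characterization of inner functions in one variable. First I would observe that, since $\KK$ is finite dimensional, say $\dim\KK = d$, the entries $Q_{k\ell}(z) = \scal{Q(z)e_\ell,e_k}$ are bounded analytic functions on $\D$ (because $Q\in\cA(\KK)$ means $\widehat Q(H^2_\KK)\subseteq H^2_\KK$, so each matrix entry lies in $H^\infty$). Consequently $q(z) = \det(Q(z))$, being a polynomial in finitely many $H^\infty$ functions, is itself in $H^\infty$ and extends analytically to $\D$. Moreover, for almost every $z\in\T$ the matrix $Q(z)$ is unitary, hence $|q(z)| = |\det Q(z)| = 1$ a.e. on $\T$. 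Thus $q$ is a scalar inner function.

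Next I would use the hypothesis that $q$ is constant: a constant inner function is necessarily a unimodular constant $c$ with $|c|=1$. The key step is then to argue that each $Q(z)$ is unitary with determinant of constant modulus bounded below uniformly, so that $Q(z)^{-1}$ exists a.e.\ and its entries are again bounded: by Cramer's rule $Q(z)^{-1} = q(z)^{-1}\,\mathrm{adj}(Q(z))$, and since $|q(z)| = 1$ on $\T$ while the adjugate entries are $H^\infty$ functions (polynomials in the $Q_{k\ell}$), we get that $Q^{-1}$ also has all entries in $H^\infty$, i.e.\ $\rho$-type reasoning shows $Q(\bar z)^{-1}$ is bounded analytic. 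Put differently, $Q^*$ — which on $\T$ equals $Q^{-1}$ — is analytic, so both $Q$ and $Q^*$ are analytic operator-valued functions. Applying Proposition \ref{inclusion_inner} with $Q_1 = Q_2 = Q$ in one direction and with $Q_1 = I$, $Q_2 = Q$ (using that $Q^{-1} = Q^*$ is analytic) in the other gives $\widehat Q(H^2_\KK) = H^2_\KK$, so $Q^{-1}\in\cA_\cI$ as well. A bounded analytic function on $\D$ whose inverse is also bounded analytic, with $Q(0)$ invertible, forces $Q$ to be constant: indeed, consider any entry $Q_{k\ell}$; from $Q^{-1}\in H^\infty$ one deduces that $Q$ has no zeros in the sense that $\det Q$ is nonvanishing on $\D$, and since $\det Q \equiv c$ is already constant, the cofactor expansion $Q^{-1} = c^{-1}\mathrm{adj}(Q)$ shows $\mathrm{adj}(Q)$ is a matrix of $H^\infty$ functions whose own determinant is $c^{d-1}$, again constant; iterating or, more directly, noting that a matrix-valued inner function whose adjoint is also inner must be a.e.\ constant because $\widehat Q$ is then a unitary operator on $H^2_\KK$ commuting with $S$ (by \cite[Corollary 3.19]{RR}, such operators are exactly the $\widehat Q$ with $Q$ constant unitary).

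The cleanest route, which I would actually write, is this: from $q\equiv c$ constant and $|c|=1$, Cramer's rule gives $Q(z)^{-1} = \bar c\,\mathrm{adj}(Q(z))$ with entries in $H^\infty$; hence $Q^{-1}\in\cA(\KK)$, and since $Q(z)^{-1}=Q(z)^*$ is unitary a.e., in fact $Q^{-1}\in\cA_\cI(\KK)$. Now $\widehat{Q^{-1}}\widehat Q = I$ shows $\widehat Q(H^2_\KK)\supseteq H^2_\KK$, while $Q\in\cA_\cI$ gives $\widehat Q(H^2_\KK)\subseteq H^2_\KK$; thus $\widehat Q(H^2_\KK) = H^2_\KK$, i.e.\ $\widehat Q$ is a unitary operator on $H^2_\KK$ that commutes with the shift $S$. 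By \cite[Corollary 3.19]{RR} — the same fact invoked in the proof of Theorem \ref{lem:full-range} — any bounded operator on $L^2(\T,\KK)$ (a fortiori on $H^2_\KK$) commuting with $U$ that is multiplication by an analytic symbol must have constant symbol when it is invertible with analytic inverse; more concretely, $S^*\widehat Q S = \widehat Q$ forces each Fourier coefficient $Q_n$ with $n\geqslant 1$ to vanish, so $Q(z) = Q_0$ is constant.

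The main obstacle I anticipate is making precise the step "$Q$ and $Q^{-1}$ both analytic inner $\Rightarrow$ $Q$ constant": one must be careful that $Q^{-1}$ is genuinely analytic (not merely measurable), which is exactly what Cramer's rule together with $|\det Q|\equiv 1$ delivers in the finite-dimensional setting — and this is precisely where finite dimensionality of $\KK$ is essential, since in infinite dimensions there is no determinant and the argument collapses. Once analyticity of $Q^{-1}$ is secured, the conclusion that $\widehat Q$ is a shift-commuting unitary on $H^2_\KK$, hence has constant symbol, is routine via the Fourier expansion $Q(z) = \sum_{n\geqslant 0}Q_n z^n$ of \eqref{eq:fourier_Q}: the relation $\widehat Q S = S\widehat Q$ (equivalently $\widehat Q$ commutes with multiplication by $z$) together with $\widehat Q$ being onto $H^2_\KK$ and isometric pins down $Q_n = 0$ for $n\geqslant 1$.
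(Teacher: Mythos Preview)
The paper does not prove this proposition; it is quoted verbatim from Helson \cite[Corollary of Theorem~11]{H} and used as a black box in the proof of Theorem~\ref{thm:similarity}. So there is no ``paper's own proof'' to compare against, and your attempt is a genuine proof rather than a reconstruction.

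Your core idea is correct and is essentially Helson's argument: in finite dimensions, Cramer's rule gives $Q(z)^{-1}=\overline{c}\,\mathrm{adj}(Q(z))$ with entries in $H^\infty$, so $Q^{-1}$ is analytic; on $\T$ we have $Q^{-1}=Q^*$, and since $Q(z)^*=\sum_{n\geqslant0}Q_n^*\,\overline{z}^{\,n}=\sum_{n\geqslant0}Q_n^*\,z^{-n}$ has only non-positive Fourier coefficients, analyticity forces $Q_n=0$ for $n\geqslant1$.

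However, your final step as written is a red herring. The identity $S^*\widehat{Q}S=\widehat{Q}$ holds for \emph{every} analytic multiplication operator (because $S^*S=I_{H^2_\KK}$ and $\widehat{Q}$ commutes with $S$), so it cannot by itself force the higher coefficients to vanish. Likewise, \cite[Corollary~3.19]{RR} only says that multiplication operators commute with $U$; it does not assert that shift-commuting unitaries have constant symbol. The clean closing argument is the Fourier one above: once you know $Q^{-1}=Q^*$ is analytic, compare its two Fourier expansions --- the one coming from analyticity of $Q^{-1}$ (non-negative indices) and the one coming from the adjoint of $Q$ (non-positive indices) --- and conclude $Q_n=0$ for $n\geqslant1$. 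Replace your last two sentences with that, drop the appeal to \cite{RR}, and the proof is complete.
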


The final ingredient we need is to see how $\rho$ acts on the Fourier expansion \eqref{eq:fourier_Q}. If $Q\in\cA_\cI$ can be written as
\begin{equation*}
Q(z)=\sum_{n\geqslant0}Q_nz^n
\end{equation*}
where $Q_n$ are bounded operators and the convergence is understood in the strong operator topology, then it follows that
\begin{equation*}
\rho(Q)(z)=Q(\overline{z})^*=\sum_{n\geqslant0}Q_n^*z^n.
\end{equation*}
And since the coefficient-operators of a Fourier expansion are unique, this implies that $Q_n^*$ is the $n^{th}$-Fourier coefficient of $\rho(Q)$ for all $n\geqslant0$.

We now give several characterizations of the case when a basic frame and its adjoint are similar.

\begin{theorem}\label{thm:similarity}
Let $\KK$ be a separable Hilbert space and $Q\in\cA_\cI$ with Fourier coefficients $\{Q_n\}_n$. The following statements are equivalent:
\begin{enumerate}[label=(\roman*),ref=(\roman*)]
\item $\widehat{\rho(Q)}H^2_\KK=\widehat{Q}H^2_\KK$;

\item There exists a unitary operator $A:\KK\to\KK$ such that $AQ_n=Q_nA=(AQ_n)^*$ for all $n\geqslant0$.
\end{enumerate}
Moreover, if $\dim(\KK) <\infty$ then (i) and (ii) are also equivalent to
\begin{enumerate}[label=(\roman*),ref=(\roman*)]
\item[(iii)] $\widehat{\rho(Q)}H^2_\KK\subseteq\widehat{Q}H^2_\KK$;

\item[(iv)] $\widehat{Q}H^2_\KK\subseteq\widehat{\rho(Q)}H^2_\KK$.
\end{enumerate}
\end{theorem}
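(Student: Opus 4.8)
The plan is to prove $(ii)\Rightarrow(i)$ and $(i)\Rightarrow(ii)$ first, since these hold for arbitrary separable $\KK$, and then to obtain the finite-dimensional equivalences by reducing everything to a statement about a determinant. For $(ii)\Rightarrow(i)$ I would argue directly: given a unitary $A$ with $AQ_n=Q_nA=(AQ_n)^*$, I combine $AQ_n=(AQ_n)^*=Q_n^*A^*$ with $AQ_n=Q_nA$ to get $Q_n A=Q_n^*A^*$, hence $Q_n^*=Q_nA^2$ for every $n\geqslant0$. Since $Q_n^*$ is the $n^{\text{th}}$-Fourier coefficient of $\rho(Q)$ and Fourier coefficients are unique, this says exactly $\rho(Q)(z)=Q(z)A^2$ for a.e.\ $z\in\T$, with $A^2$ a constant unitary on $\KK$; therefore $\widehat{\rho(Q)}(H^2_\KK)=\widehat{Q}(\widehat{A^2}(H^2_\KK))=\widehat{Q}(H^2_\KK)$, which is $(i)$.

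For $(i)\Rightarrow(ii)$ I would use that $Q$ and $\rho(Q)$ both lie in $\cA_\cI(\KK)$ by Remark \ref{rem:coeficiente-involucion}$(iii)$ and, by $(i)$, determine the same $S$-invariant subspace, so the uniqueness clause of the Beurling--Lax--Halmos Theorem \ref{thm:beu-lax-hal} produces a constant unitary $R\colon\KK\to\KK$ with $\rho(Q)(z)=Q(z)R$ a.e., i.e.\ $Q_n^*=Q_nR$ for every $n$. Taking adjoints and substituting gives $Q_n=R^*Q_nR$, from which $R$ — and hence $R^*$ — commutes with every $Q_n$. I then take a unitary square root $A$ of $R$ via the functional calculus (for instance $A=f(R)$ with $f(e^{i\theta})=e^{i\theta/2}$ on $[0,2\pi)$), so that $A^2=R$ and $A$ lies in the von Neumann algebra generated by $R$, hence commutes with each $Q_n$ as well. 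A short computation then gives $AQ_n=Q_nA$ and $(AQ_n)^*=Q_n^*A^*=Q_nA^2A^*=Q_nA$, so $AQ_n=Q_nA=(AQ_n)^*$, which is $(ii)$.

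For the finite-dimensional case, $(i)\Rightarrow(iii)$ and $(i)\Rightarrow(iv)$ are trivial, so the work is in $(iii)\Rightarrow(i)$ (and then $(iv)\Rightarrow(i)$ by symmetry). Here I would invoke Proposition \ref{inclusion_inner}: $(iii)$ says $R:=Q^*\rho(Q)\in\cA_\cI(\KK)$, and since $Q(z)$ is unitary a.e.\ this gives $\rho(Q)=QR$. Passing to determinants pointwise, set $q:=\det Q$ and let $q^{\#}(z):=\overline{q(\bar z)}$; then $\det\rho(Q)=q^{\#}$, and $p\mapsto p^{\#}$ is a multiplicative involution of the monoid of scalar inner functions. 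Hence $q^{\#}=q\,r$ with $r:=\det R$ scalar inner; applying $\#$ yields $q=q\,r\,r^{\#}$, so $r\,r^{\#}=1$, and since both $r$ and $r^{\#}$ are inner (so of modulus at most one in $\D$) this forces $r$ to be a unimodular constant. Thus $\det R$ is constant, and Proposition \ref{det_const} (where finite dimensionality is used) gives that $R$ is itself a constant unitary $R_0$; then $\rho(Q)=QR_0$ gives $\widehat{\rho(Q)}(H^2_\KK)=\widehat{Q}(H^2_\KK)$, i.e.\ $(i)$. Finally $(iv)\Rightarrow(i)$ follows by applying $(iii)\Rightarrow(i)$ to $\rho(Q)$ in place of $Q$, which is legitimate since $\rho(Q)\in\cA_\cI(\KK)$ and $\rho(\rho(Q))=Q$.

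I expect the main obstacle to be the finite-dimensional part, namely converting the subspace inclusion into the \emph{constancy} of $\det(Q^*\rho(Q))$; this is exactly where the involution $p\mapsto p^{\#}$ on scalar inner functions does the essential work, after which Proposition \ref{det_const} closes the argument. A secondary technical point is arranging a unitary square root of $R$ that still commutes with all the $Q_n$, which the functional calculus handles cleanly.
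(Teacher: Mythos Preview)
Your proof is correct and follows essentially the same route as the paper: the equivalence $(i)\Leftrightarrow(ii)$ via the uniqueness clause of Beurling--Lax--Halmos plus a unitary square root of $R$ through the functional calculus, and the finite-dimensional implications by passing to determinants and invoking Proposition~\ref{det_const}. The only cosmetic difference is in the scalar step: the paper observes directly that $f(z)=q(\bar z)q(z)$ is analytic and satisfies $f(z)=f(\bar z)$, hence is constant, whereas you phrase the same symmetry via the involution $p\mapsto p^{\#}$ and deduce $rr^{\#}=1$; these are equivalent formulations of the same argument.
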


\begin{proof}
We prove that $(i)\Leftrightarrow(ii)$. Suppose first that $\widehat{\rho(Q)}H^2_\KK=\widehat{Q}H^2_\KK$. Then, by the unicity part of Theorem \ref{thm:beu-lax-hal} there exists a unitary operator $\widehat{R}$ such that $R(z)=R:\KK\to\KK$ almost all $z\in\T$ and $\widehat{\rho(Q)}=\widehat{Q}\widehat{R}$. Applying $\rho$ on both sides yields
\begin{equation*}
\widehat{Q}=\widehat{\rho(\rho(Q))}=\widehat{\rho(R)}\widehat{\rho(Q)}=\widehat{R}^*\widehat{Q}\widehat{R}.
\end{equation*}
Then, since the Fourier coefficients of $Q$ are unique, we get the following relation $Q_n^*=RQ_n=Q_nR$ for all $n\geqslant0$.

Now, since $R$ is unitary, we can apply the functional calculus for normal operators to find a unitary operator $A$ such $A^2=R$ and $AQ_n=Q_nA$ for all $n\geqslant0$. Also
\begin{equation*}
(AQ_n)^*=Q_n^*A^*=(Q_nR)A^*=Q_nA^2A^*=Q_nA=AQ_n
\end{equation*}
holds for all $n\geqslant0$.

Reciprocally, assume there exists a unitary operator $A$ such that $AQ_n=Q_nA=(AQ_n)^*$ for all $n\geqslant0$ and define $R:=A^2$. Then
\begin{equation*}
Q_nR=Q_nA^2=(Q_nA)A=(AQ_n)A=(AQ_n)^*A=Q_n^*A^*A=Q_n^*
\end{equation*}
for all $n\geqslant0$. Hence, $\rho(Q)(z)=Q(z)R$ for almost all $z\in\T$ which implies that $\widehat{\rho(Q)}H^2_\KK=\widehat{Q}H^2_\KK$.

Suppose now that $\dim(\KK) = d$. Then, it is clear that $(i)\Rightarrow(iii)$ and $(i)\Rightarrow(iv)$. For the converses we will only show that $(iv)\Rightarrow(i)$ as the proof for $(iii)\Rightarrow(i)$ is analogous. 

The inclusion $\widehat{Q}H^2_\KK\subseteq\widehat{\rho(Q)}H^2_\KK$ implies by Proposition \ref{inclusion_inner} that $\rho(Q)^*Q$ is inner. 
Set $q(z)=\det(Q(z))$ and note that
\begin{equation*}
\det(\rho(Q)(z)^*Q(z))=\det(Q(\overline{z})Q(z))=q(\overline{z})q(z)
\end{equation*}
for almost every $z\in\T$. The fact that $\rho(Q)^*Q$ is analytic, implies that the $L^2(\T)$-function $f(z)=q(\overline{z})q(z)$ must be analytic. But since $f(z)=f(\overline{z})$ for almost every $z\in\T$, $f$ must be constant. Thus, by Proposition \ref{det_const}, $\rho(Q)^*Q$ must also be constant. In other words, there exists a fixed unitary operator $R$ such that $Q(z)=\rho(Q)(z)R$ for almost every $z\in\T$ and thus $\widehat{\rho(Q)}H^2_\KK=\widehat{Q}H^2_\KK$.
\end{proof}

\subsection{More results in the scalar case}

In this section, we show that, in the scalar-valued case $\KK=\C$, Theorem \ref{thm:similarity} admits two additional equivalent conditions characterizing when a basic frame is similar to its adjoint frame.

Notice that the word `inner' in Definition \ref{inner} is chosen so that it extends that of scalar inner functions. Recall that we say that $q:\T\to\C$ is inner if it is in $H^2$ and $\abs{q(z)}=1$ for almost every $z\in\T$. Thus, trivially, the map $\zeta\mapsto q(z)\zeta$ defines a unitary operator in $\C$ for almost every $z\in\T$. Additionally, since $q\in H^2$ and is bounded, the product $z\mapsto q(z)f(z)$ is in $H^2$ for any $f\in H^2$. 
Altogether, this shows that the scalar inner functions are precisely the elements of $\cA_\cI(\C)$.

Applying Theorem \ref{thm:beu-lax-hal} to this case we obtain the following.

\begin{theorem}[Beurling]\label{thm:beurling}
A closed subspace $M\subseteq H^2$ is $S$-invariant if and only if $M=qH^2$ for some inner function $q$. If $q_1$ and $q_2$ are inner functions such that $q_1 H^2 = q_2 H^2$, then $q_1 = c\, q_2$, for some $c\in\T$.
\end{theorem}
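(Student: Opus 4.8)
The plan is to read off Theorem~\ref{thm:beurling} from the Beurling--Lax--Halmos Theorem (Theorem~\ref{thm:beu-lax-hal}) in the special case $\KK=\C$, using the observation recorded just above the statement that $\cA_\cI(\C)$ is precisely the set of scalar inner functions. For the ``if'' direction, let $q$ be inner. Since $\abs{q(z)}=1$ for a.e.\ $z\in\T$, multiplication by $q$ is an isometry of $H^2$, so $qH^2$ is a closed subspace; and it is $S$-invariant because $S(qf)(z)=zq(z)f(z)=q(z)(Sf)(z)$ for every $f\in H^2$, i.e.\ $S(qf)=q(Sf)\in qH^2$.

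For the ``only if'' direction, assume $M\subseteq H^2$ is a nonzero closed $S$-invariant subspace and apply Theorem~\ref{thm:beu-lax-hal} with $\KK=\C$: there are a subspace $\KK_1\subseteq\C$ and $Q\in\cA(\C)$ with $M=\widehat{Q}(H^2_{\KK_1})$ and $Q(z)\colon\C\to\C$ a partial isometry with initial space $\KK_1$ for a.e.\ $z$. The only subspaces of $\C$ are $\{0\}$ and $\C$; since $M\neq\{0\}$ we must have $\KK_1=\C$, so $Q(z)$ is an isometry of $\C$, that is, $\abs{Q(z)}=1$ a.e., and moreover $Q=\widehat{Q}1\in H^2_\C=H^2$. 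Hence $Q$ is a scalar inner function and $M=\widehat{Q}(H^2)=QH^2$. (The degenerate alternative $\KK_1=\{0\}$ corresponds exactly to $M=\{0\}$, so strictly the statement is to be read for nonzero $M$, or with the convention $q\equiv 0$.)

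For the uniqueness clause, suppose $q_1,q_2$ are inner with $q_1H^2=q_2H^2$. These give two representations of the same $S$-invariant subspace as in~\eqref{asociada} with $\KK_1=\KK_2=\C$, so the uniqueness part of Theorem~\ref{thm:beu-lax-hal} furnishes a unitary $R\colon\C\to\C$ with $q_1(z)=q_2(z)R$ a.e.; a unitary operator on $\C$ is multiplication by some $c\in\T$, giving $q_1=c\,q_2$. Alternatively, one can argue directly: writing $q_1=q_2h$ and $q_2=q_1g$ with $g,h\in H^2$ yields $q_1(1-gh)=0$ a.e., hence $gh\equiv 1$; comparing moduli and using $\abs{q_1}=\abs{q_2}=1$ a.e.\ gives $\abs{h}=1$ a.e., so $g=\overline{h}$ on $\T$, and the only function lying in $H^2$ together with its conjugate is a unimodular constant.

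I do not expect a genuine obstacle here: this is a routine specialization of the vector-valued theorem, and the one point deserving a moment's care is the bookkeeping of the trivial subspace $M=\{0\}$ (equivalently $\KK_1=\{0\}$), which is disposed of the instant one restricts to nonzero $M$. Everything else is a translation of the one-dimensional partial-isometry condition into the unimodularity $\abs{Q(z)}=1$ a.e.\ that defines scalar inner functions.
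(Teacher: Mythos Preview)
Your proposal is correct and takes exactly the approach the paper indicates: the paper does not give a detailed proof of Theorem~\ref{thm:beurling} but simply states, just before it, that it is obtained by ``applying Theorem~\ref{thm:beu-lax-hal} to this case,'' i.e.\ specializing Beurling--Lax--Halmos to $\KK=\C$, and remarks afterward that in this scalar case the associated function in~\eqref{asociada} is always inner. Your write-up fills in precisely those details (including the degenerate case $M=\{0\}$ and the uniqueness clause via the unitary $R\colon\C\to\C$), so there is nothing to add.
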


We remark that, in this case the associated function in \eqref{asociada} is always inner and thus all $S$-invariant subspaces of $H^2$ are full range.

The basic types of inner functions are complex constants of module one, 
Blaschke products and singular functions. We recall that a Blaschke product is a function
\begin{equation}
B(z) =\alpha z^{M}\prod_{n\geqslant 1}\frac{\overline{a_{n}}}{|a_{n}|}\frac{a_{n} -z}{1-\overline{a_{n}}z},\quad z\in\D
\end{equation}
where $\alpha\in\T$, $M\geqslant0$ and $\{a_{n}\}_n\subseteq\D\setminus\{0\}$ is a Blaschke sequence, i.e., it satisfies the condition $\sum_{n\geqslant1} (1-|a_{n}|) <\infty$.

Moreover, a {\em singular inner function} has the form
\begin{equation} 
S_{\mu}(z) =\alpha\exp\left( -\int_\T\frac{w+z}{w-z}\,d\mu(w)\right),\quad z\in\D
\end{equation}
where $\alpha\in\T$ and $\mu$ is a positive measure on $\T$ which is singular with respect to the normalized Lebesgue measure on $\T$.

In fact, a theorem due to Nevanlinna and F. Riesz (see \cite[Theorem 2.14]{GMR}) shows that every inner function $q$ can be factorized as $q(z)= z^{k} B(z) S_{\mu}(z)$, where $k\geqslant 0$, $B$ is a Blaschke product and $S_{\mu}$ is a singular inner function. For more details on the structure of inner functions we refer the reader to \cite[Section 2.3]{GMR}.

The $S$-invariant subspaces of $H^2$ can be partially ordered according to the di\-vi\-si\-bi\-li\-ty of inner functions. This is, $q_{1} H^2\subseteq q_2H^2$ if and only if $q_2$ divides $q_{1}$ in the sense that there exists a function $q\in H^2$ such that $q_1 = q_2 q$.

The following lemma will be useful for the proof of Proposition \ref{unidim} and it is a consequence of \cite[Proposition 4.10]{GMR}.
\begin{lemma}\label{blaschke_sing}

\

\begin{enumerate}[label=(\roman*),ref=(\roman*)]

\item If $B_{1}$ and $B_2$ are two Blaschke products, then $B_{1}$ divides $B_2$ if and only if the set of zeros of $B_{1}$ is contained in the set of zeros of $B_2$.

\item If $S_{\mu_1}$ and $S_{\mu_2}$ are two singular functions, then $S_{\mu_1}$ divides $S_{\mu_2}$ if and only if $\mu_2 -\mu_1$ is a positive measure.

\item If $q_{1} = c_{1} B_{1} S_{\mu_1}$ and $q_2 = c_2 B_2 S_{\mu_2}$ are two inner functions with their respective factorizations, then $q_1$ divides $q_2$ if and only if $B_1$ divides $B_2$ and $S_{\mu_1}$ divides $S_{\mu_2}$.
\end{enumerate}
\end{lemma}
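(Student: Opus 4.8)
The plan is to reduce all three items to the canonical (Nevanlinna--Riesz) factorization together with one elementary observation: for two inner functions $u,v$, the relation ``$u$ divides $v$'' (i.e. $v=uq$ for some $q\in H^2$) is equivalent to the quotient $v/u$ being itself inner. Indeed, if $v=uq$ with $q\in H^2$, then on $\T$ we get $1=\abs{v}=\abs{u}\abs{q}=\abs{q}$ a.e., so $q$ is inner; the converse is immediate. With this in hand, divisibility becomes a purely multiplicative matter and the canonical factorization $q=z^{k}BS_\mu$ (with the $z^k$ absorbed into the Blaschke factor, i.e. a zero at the origin counted as part of $B$) splits it into a Blaschke part and a singular part.

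For item (i): if $B_1$ divides $B_2$, write $B_2=B_1q$ with $q$ inner; evaluating at a zero $a$ of $B_1$ of multiplicity $m$ forces $B_2$ to vanish at $a$ to order at least $m$, so the zero multiset of $B_1$ is contained in that of $B_2$. Conversely, a sub-multiset of a Blaschke sequence is again a Blaschke sequence, so the ``leftover'' zeros of $B_2$ determine a convergent Blaschke product $B_3$ with $B_2=\alpha B_1B_3$ for a unimodular constant $\alpha$ (a Blaschke product is determined by its zeros up to a unimodular constant, by the uniqueness clause of Theorem \ref{thm:beurling} applied to the Blaschke factor), whence $B_1$ divides $B_2$. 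For item (ii), I would use the exponential representation to check the formal identity $S_{\mu_2}/S_{\mu_1}=\alpha S_{\mu_2-\mu_1}$, where for a real (necessarily singular) measure $\nu$ one sets $S_\nu(z)=\exp\bigl(-\int_\T\frac{w+z}{w-z}\,d\nu(w)\bigr)$; this function is analytic and zero-free in $\D$ with boundary modulus $1$ a.e., and $\abs{S_\nu(z)}=\exp\bigl(-P[\nu](z)\bigr)$ where $P[\nu]$ is the Poisson integral of $\nu$. Thus $S_\nu$ is inner iff $P[\nu]\geqslant0$ on $\D$, and by the Herglotz uniqueness of the representing measure of a nonnegative harmonic function this holds iff $\nu\geqslant0$. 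Hence $S_{\mu_1}$ divides $S_{\mu_2}$ iff $\mu_2-\mu_1\geqslant0$.

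For item (iii): if $q_1$ divides $q_2$, write $q_2=q_1q$ with $q$ inner, factor $q=cB_3S_\nu$ canonically, compare with the given factorizations of $q_1,q_2$, and invoke uniqueness of the canonical factorization to conclude $B_2=\tilde c B_1B_3$ and $S_{\mu_2}=\tilde c' S_{\mu_1}S_\nu$ up to unimodular constants, i.e. $B_1\mid B_2$ and $S_{\mu_1}\mid S_{\mu_2}$. Conversely, if $B_1\mid B_2$ and $S_{\mu_1}\mid S_{\mu_2}$, write $B_2=\beta B_1B_3$ and $S_{\mu_2}=\gamma S_{\mu_1}S_\nu$ with $B_3,S_\nu$ inner; then $q_2=c_2B_2S_{\mu_2}=(c_2\beta\gamma/c_1)\,(B_3S_\nu)\,q_1$, so $q_1$ divides $q_2$.

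The one genuinely analytic point, and the main obstacle, is the converse direction of (ii): deducing from the boundedness of $S_{\mu_2}/S_{\mu_1}$ in $\D$ that $\mu_2-\mu_1$ is a \emph{nonnegative} measure. This needs the characterization of nonnegative measures through nonnegativity of their Poisson integrals, together with care that $\mu_2-\mu_1$ (a difference of two positive singular, possibly non-mutually-singular, measures) is a well-defined real singular measure to which the singular-exponential formalism applies. Everything else is bookkeeping with the canonical factorization and the uniqueness part of Beurling's theorem; as the statement already indicates, all of this can be compressed by citing \cite[Proposition 4.10]{GMR} for the divisibility criterion and the Nevanlinna--Riesz factorization theorem cited above.
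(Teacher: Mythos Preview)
Your argument is correct. The paper itself does not prove this lemma: it simply states that it ``is a consequence of \cite[Proposition 4.10]{GMR}'' and moves on, which you also acknowledge at the end of your write-up. So there is no discrepancy of approach to discuss---you have supplied the details behind the citation, and the analytic core you flag (that $\abs{S_\nu(z)}=\exp(-P[\nu](z))$ together with Herglotz uniqueness forces $\nu\geqslant0$ exactly when $S_\nu$ is bounded in $\D$) is indeed the substantive step.
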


Theorem \ref{kerC_frameS*} here reads as follows. Observe that the involution $\rho$ here acts as $\rho(q)(z)=q(\overline{z})$ for any inner function $q$.

\begin{proposition}
Let $q\in H^2$ be an inner function and denote $N=H^2\ominus qH^2$. The kernel of the synthesis operator $C$ for $\{(S^*)^nS^*q\}_{n\geqslant 0}$ is given by
\begin{equation*}
\ker C=\rho(q)H^2
\end{equation*}
where $\rho(q)(z)=\overline{q(\overline{z})}$ for almost all $z\in\T$.
\end{proposition}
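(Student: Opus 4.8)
The plan is to read off this proposition as the $\KK=\C$ instance of Theorem \ref{kerC_frameS*}. First I would take the one-element index set $I=\{1\}$ with $e_1=1\in\C$, so that $\ell^2(I)=\KK=\C$ and $H^2_{\ell^2(I)}=H^2$. As noted earlier in this section, the scalar inner functions are precisely the members of $\cA_\cI(\C)$, so the given inner function $q$ determines $Q\in\cA_\cI(\C)$ acting by $Q(z)\zeta=q(z)\zeta$. Then $\widehat{Q}(H^2_\C)=qH^2$, hence the model space $N=H^2\ominus\widehat{Q}(H^2_\C)$ coincides with $H^2\ominus qH^2$ from the statement; the generator of the wandering subspace is $E_1=\widehat{Q}e_1=q$; and the adjoint frame $\{(S^*)^nS^*E_1\}_{n\geqslant0}=\{(S^*)^nS^*q\}_{n\geqslant0}$, with synthesis operator $C\colon H^2\to N$, is exactly the object appearing in Theorem \ref{kerC_frameS*}.

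The second step is to identify what $\rho$ becomes in this scalar picture. Since $Q(z)$ is multiplication by the scalar $q(z)$, the adjoint $Q(z)^*$ is multiplication by $\overline{q(z)}$, so $\rho(Q)(z)=Q(\overline{z})^*$ is multiplication by $\overline{q(\overline{z})}=\rho(q)(z)$, whence $\widehat{\rho(Q)}H^2_\C=\rho(q)H^2$. Feeding this into Theorem \ref{kerC_frameS*} yields $\ker C=\widehat{\rho(Q)}H^2_\KK=\rho(q)H^2$, which is the claim. Along the way I would record for completeness that $\rho(q)$ is again inner: writing $q(z)=\sum_{n\geqslant0}a_nz^n$ one has $\rho(q)(z)=\overline{q(\overline{z})}=\sum_{n\geqslant0}\overline{a_n}z^n\in H^2$ and $\abs{\rho(q)(z)}=\abs{q(\overline{z})}=1$ for almost every $z\in\T$, so $\rho(q)H^2$ is a bona fide $S$-invariant subspace (this is also item $(iii)$ of Remark \ref{rem:coeficiente-involucion}).

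There is essentially no obstacle here: the entire content sits in Theorem \ref{kerC_frameS*}, and what remains is the bookkeeping that multiplication operators on $H^2_\C$ correspond faithfully to scalar multipliers on $H^2$ and that $\rho$ restricts to $q\mapsto\overline{q(\overline{\,\cdot\,})}$ on scalar inner functions. If one instead wants a self-contained proof, it suffices to repeat the computation in the proof of Theorem \ref{kerC_frameS*} with the single basis vector $\rchi^0=1$: pairing $Cf$ against $S^m1=\rchi^m$, using that a function in $L^2(\T)$ and its reflection $z\mapsto\overline{z}$ have the same zeroth Fourier coefficient to rewrite $\scal{q,\rchi^{n+m+1}}=\scal{\rchi^{n+m+1},\rho(q)}$, and applying Parseval with the orthonormal basis $\{\rchi^n\}_{n\geqslant0}$ of $H^2$, one obtains $\scal{Cf,\rchi^m}=\scal{f,(S^*)^mS^*\rho(q)}$. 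Hence $\ker C=\osp\{(S^*)^mS^*\rho(q):m\geqslant0\}^{\perp}$, and since $\{\rho(q)\}$ is an orthonormal basis of the wandering subspace of the full-range space $\rho(q)H^2$, Theorem \ref{frameN} (equivalently Lemma \ref{lem:wandering-frame}) identifies this orthogonal complement as $\rho(q)H^2$, finishing the proof.
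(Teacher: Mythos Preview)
Your proposal is correct and follows exactly the paper's approach: the paper presents this proposition as the direct specialization of Theorem \ref{kerC_frameS*} to $\KK=\C$ (introduced with ``Theorem \ref{kerC_frameS*} here reads as follows''), and you have simply spelled out the identification carefully.
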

Now, we give the analogue of Theorem \ref{thm:similarity} that characterizes when the kernel of the synthesis operators of a basic frame and its adjoint frame coincide. In this case we add two more equivalent conditions to those given in Theorem \ref{thm:similarity}. 

\begin{theorem}\label{unidim}
Let $q\in H^2$ be an inner function. The following statements are equivalent
\begin{enumerate}[label=(\roman*),ref=(\roman*)]
\item $\rho(q)H^2=qH^2$

\item $\rho(q)H^2\subseteq qH^2$

\item $qH^2\subseteq\rho(q)H^2$

\item The set of zeros of $q$ in $\D$ is closed under conjugation counting multiplicities, and the singular measure $\mu$ associated to $q$ is also invariant under conjugation.

\item There exists a constant $\alpha\in\T$ such that $\alpha\,\hat{q}(n)$ is real for all $n\geqslant0$. 
\end{enumerate}
\end{theorem}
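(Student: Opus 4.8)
The strategy is to prove the chain of implications $(iv)\Leftrightarrow(v)$ and $(i)\Leftrightarrow(iv)$ while freely invoking the equivalences $(i)\Leftrightarrow(ii)\Leftrightarrow(iii)$ that are already available from Theorem \ref{thm:similarity} in the case $\KK=\C$, together with the divisibility dictionary of Lemma \ref{blaschke_sing} and Theorem \ref{thm:beurling}. Throughout, recall that for a scalar inner function $q$ the involution reads $\rho(q)(z)=\overline{q(\overline z)}$, and that $qH^2=\rho(q)H^2$ holds exactly when $q$ and $\rho(q)$ differ by a unimodular constant, by the uniqueness part of Theorem \ref{thm:beurling}. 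So the whole theorem amounts to describing, in terms of zeros and singular measure, when $\rho(q)=\alpha q$ for some $\alpha\in\T$.

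\textbf{Reducing to divisibility.} First I would observe that $(i)\Rightarrow(ii)$ and $(i)\Rightarrow(iii)$ are trivial, so the content of the equivalence of $(i)$, $(ii)$, $(iii)$ is already contained in Theorem \ref{thm:similarity}(iii)--(iv) applied with $\KK=\C$ (here $\dim\KK=1<\infty$, so that part of the theorem applies and gives $(iii)\Rightarrow(i)$ and $(iv)\Rightarrow(i)$ in the present numbering). Thus it remains to fold $(iv)$ and $(v)$ into this cluster. For $(iv)\Rightarrow(i)$: factor $q=\alpha_0 z^k B S_\mu$ by Nevanlinna--F.\ Riesz. A direct computation from the Blaschke and singular product formulas shows that $\rho(q)(z)=\overline{q(\overline z)}$ has Nevanlinna--Riesz factorization with the same leading power $z^k$, with Blaschke part the product over the conjugated zeros $\{\overline{a_n}\}$, and with singular part $S_{\tilde\mu}$ where $\tilde\mu$ is the pushforward of $\mu$ under conjugation $w\mapsto\overline w$ (up to unimodular constants). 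Hence hypothesis $(iv)$ — zero set of $q$ closed under conjugation with multiplicities, and $\mu$ conjugation-invariant — says exactly that $\rho(q)$ and $q$ have the same Blaschke and singular factors, so by Lemma \ref{blaschke_sing}(iii) each divides the other, whence $\rho(q)H^2=qH^2$ by Theorem \ref{thm:beurling}; this is $(i)$. Conversely $(i)\Rightarrow(iv)$: if $\rho(q)H^2=qH^2$ then $q\mid\rho(q)$ and $\rho(q)\mid q$, and Lemma \ref{blaschke_sing} forces the Blaschke factors to share zero sets (with multiplicity) and the singular measures to agree; combined with the explicit form of $\rho(q)$'s factorization just described, this says the zero set of $q$ equals its conjugate and $\mu$ equals $\tilde\mu$, i.e.\ $(iv)$.

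\textbf{The Fourier-coefficient condition.} For $(i)\Leftrightarrow(v)$ I would argue directly on the power series. Write $q(z)=\sum_{n\geqslant0}\hat q(n)z^n$; then from $\rho(q)(z)=\overline{q(\overline z)}$ we read off $\widehat{\rho(q)}(n)=\overline{\hat q(n)}$ for all $n\geqslant0$. By Theorem \ref{thm:beurling}, $\rho(q)H^2=qH^2$ iff $\rho(q)=\beta q$ for some $\beta\in\T$, i.e.\ $\overline{\hat q(n)}=\beta\,\hat q(n)$ for every $n\geqslant0$. Pick the smallest $n_0$ with $\hat q(n_0)\neq0$ (it exists since $q\not\equiv0$); then $\beta=\overline{\hat q(n_0)}/\hat q(n_0)$ has modulus $1$, and choosing any square root $\alpha\in\T$ of $\overline\beta$ (so $\alpha^2\overline{(\,\cdot\,)}$ matching), one checks $\alpha\hat q(n)=\overline{\alpha\hat q(n)}$, i.e.\ $\alpha\hat q(n)\in\R$, for all $n$; this is $(v)$. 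The converse is immediate: if $\alpha\hat q(n)\in\R$ for all $n$ then $\overline{\hat q(n)}=\alpha^2\hat q(n)$, so $\rho(q)=\alpha^2 q$ and $(i)$ holds.

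\textbf{Main obstacle.} The only genuinely non-routine point is pinning down the Nevanlinna--F.\ Riesz factorization of $\rho(q)$ — i.e.\ verifying cleanly that conjugation $z\mapsto\overline z$ on the unit circle, combined with complex conjugation of values, sends a Blaschke product with zeros $\{a_n\}$ to one with zeros $\{\overline{a_n}\}$ and a singular function $S_\mu$ to $S_{\tilde\mu}$ with $\tilde\mu$ the conjugate measure, each up to a unimodular constant. Once this is in hand, everything else is a formal application of Lemma \ref{blaschke_sing}, Theorem \ref{thm:beurling}, and the already-established part of Theorem \ref{thm:similarity}. I would therefore isolate that factorization computation as the first step, state it as the key identity $\rho(q)=c\,z^k B_{\overline{a}} S_{\tilde\mu}$ with $c\in\T$, and build the rest of the proof on top of it.
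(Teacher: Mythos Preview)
Your proposal is correct and follows essentially the same approach as the paper: invoke Theorem \ref{thm:similarity} for $(i)\Leftrightarrow(ii)\Leftrightarrow(iii)$, compute the Nevanlinna--Riesz factorization of $\rho(q)$ (conjugated zeros, pushforward singular measure) and apply Lemma \ref{blaschke_sing} for the link with $(iv)$, and use Beurling uniqueness together with the Fourier-coefficient identity $\widehat{\rho(q)}(n)=\overline{\hat q(n)}$ for $(v)$. One small slip to fix: in your $(i)\Rightarrow(v)$ step, $\alpha$ should be a square root of $\beta$, not of $\overline\beta$ (this is consistent with your own converse computation $\rho(q)=\alpha^2 q$, and matches the paper's choice $\alpha=e^{-i\theta/2}$ when $q=e^{i\theta}\rho(q)$).
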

\begin{proof}
The equivalences $(i)\Leftrightarrow(ii)\Leftrightarrow(iii)$ were already proven in Theorem \ref{thm:similarity}. To prove $(iii)\Leftrightarrow(iv)$ first note that the inclusion $qH^2\subseteq\rho(q)H^2$ is equivalent to $\frac{q}{\rho(q)}\in H^2$. Next, write $q$ as $q(z)=B(z)S_\mu(z)$ where $B$ is a Blaschke product and $S_\mu$ a singular inner function. Thus $\rho(q)(z)=\rho(B)(z)\rho(B_\mu)(z)$. It is clear that $\tilde{B}(z):=\rho(B)(z)$ is a Blaschke product with the set of zeros been the conjugates of those of $B$ (counting multiplicity), and as for $\rho(S_\mu)(z)$ note that
\begin{multline*}
\rho(S_\mu)(z)=\overline{S_\mu(\bar{z})}=\overline{\exp\left(-\int_\T\frac{\zeta+\bar{z}}{\zeta-\bar{z}}\d\mu(\zeta)\right)}=\exp\left(\overline{-\int_\T\frac{\zeta+\bar{z}}{\zeta-\bar{z}}\d\mu(\zeta)}\right)
\\
=\exp\left(-\int_\T\frac{\overline{\zeta+\bar{z}}}{\overline{\zeta-\bar{z}}}\d\mu(\zeta)\right)=\exp\left(-\int_\T\frac{\bar{\zeta}+z}{\bar{\zeta}-z}\d\mu(\zeta)\right)=\exp\left(-\int_\T\frac{\zeta+z}{\zeta-z}\d\tilde{\mu}(\zeta)\right)
\end{multline*}
where $\tilde{\mu}$ is the push-forward measure by conjugation, i.e. $\tilde{\mu}(A)=\mu(\bar{A})$ for all measurable $A\subseteq\T$. In sum, $\rho(q)(z)=\tilde{B}(z)S_{\tilde{\mu}}(z)$ is the decomposition of $\rho(q)$ into a Blaschke product and a singular inner function. By Lemma \ref{blaschke_sing}, $\frac{q}{\rho(q)}\in H^2$ if and only if the zero set of $B$ is invariant under complex conjugation, counting multiplicities, and the measure $\mu$ is invariant under conjugation as $\mu(A)-\mu(\bar{A})\geqslant0$ for all measurable subset $A\subseteq\T$, which implies $\mu(A)\geqslant\mu(\bar{A})\geqslant\mu(\bar{\bar{A}})=\mu(A)$ for all $A\subseteq\T$ measurable.

To finish the proof we will see the equivalence $(i)\Leftrightarrow(v)$. Suppose that $\rho(q)H^2=qH^2$. By the unicity part of Theorem \ref{thm:beurling} there exists $c\in\T$ such that $q(z)=c\,\rho(q)(z)$ for almost all $z\in\T$. Then 
\begin{equation*}
\widehat{q}(n)=\scal{c\rho(q),S^n1}=c\scal{\rho(q),S^n1}=c\,\overline{\widehat{q}(n)}
\end{equation*}
holds for all $n\geqslant0$. Let $c=e^{i\theta}$ for some $\theta\in[0,2\pi)$. Then multiplying both sides in the previous equation by $\widehat{q}(n)$ gives us $\widehat{q}(n)^2=e^{i\theta}\abs{\widehat{q}(n)}^2$ for all $n\geqslant0$, or equivalently, $\widehat{q}(n)=\pm e^{i\theta/2}\abs{\widehat{q}(n)}$. Therefore, if we call $\alpha = e^{-i\theta/2}$, we have $\alpha\,\widehat{q}(n)\in\R$ for all $n\geqslant0$.

Conversely, if $q$ is an inner function whose Fourier expansion is 
$q(z)=\alpha\sum_{n\geqslant0}\beta_nz^n$ for some $\alpha\in\T$ and $\{\beta_n\}_n\subseteq\R$ then $\rho(q)(z)=\overline{\alpha}\sum_{n\geqslant0}\beta_nz^n=\frac{\overline{\alpha}}{\alpha}q(z)$ and thus $qH^2=\rho(q)H^2$.
\end{proof}

\section*{Acknowledgments}
We thank the referee for the careful reading of our paper and for the valuable suggestions that helped improve the exposition.

This research was supported by grants: UBACyT 20020170100430BA, PICT 2018-3399 (ANPCyT), PICT 2019-03968 (ANPCyT) and CONICET PIP 11220110101018. F.N. was supported by the postdoctoral fellowship 10520200102714CO of CONICET.

\end{document}